\documentclass{article}
\usepackage{graphicx} 
\usepackage{amsmath,amsthm}
\usepackage{setspace}
 \usepackage[utf8]{inputenc} 
\usepackage[T1]{fontenc}    
\usepackage{xcolor}
\usepackage[
    colorlinks=true,
    citecolor=blue,
    linkcolor=green!60!black,
    urlcolor=blue
]{hyperref}       
\usepackage{url}            
\usepackage{booktabs}       
\usepackage{amsfonts}       
\usepackage{amsmath}       
\usepackage{amssymb}
\usepackage{nicefrac}       
\usepackage{cancel}
\usepackage{lipsum}
\usepackage{graphicx}
\usepackage{float}
\usepackage{caption} 
\usepackage{subcaption}

\usepackage[linesnumbered,ruled,vlined]{algorithm2e}
\SetKwInput{KwInput}{Input}                
\SetKwInput{KwOutput}{Output}  

\SetCommentSty{mycommfont}

\newcommand{\matr}[1]{\begin{bmatrix} #1 \end{bmatrix}}    
\usepackage{bbm}
\def\transp{^{\text{\sf T}}}
\def\1{\mathbbm{1}}

\newcommand{\Diag}{\text{Diag}}
\newcommand{\R}{\mathbb{R}}

\usepackage{thmtools}

\newtheorem{example}{Example}

\newtheorem{corollary}{Corollary}
\newtheorem{lemma}{Lemma}
\newtheorem{remark}{Remark}

\usepackage[title]{appendix}

\usepackage{titlesec}

\titleformat{\paragraph}
  {\normalfont\normalsize\bfseries}{\theparagraph}{1em}{}
\titlespacing*{\paragraph}{0pt}{3pt}{3pt}

\usepackage{geometry}
\usepackage[square,numbers,sort&compress]{natbib}

\title{Fragility of Minimum-Variance Portfolios}
\author{Daniel Ovalle$^1$, Carl D. Laird$^1$, Ignacio E. Grossmann$^1$ \& Javier Pe\~na$^2$\thanks{Corresponding author. Email: \texttt{jfp@andrew.cmu.edu}}\\ \\
    \small $^1$Department of Chemical Engineering, Carnegie Mellon University, Pittsburgh, PA.\\
    \small $^2$Tepper School of Business, Carnegie Mellon University, Pittsburgh, PA.\\}
\date{\today}

\begin{document}

\maketitle
\begin{abstract}
    
Minimum-variance portfolios are well known to be highly sensitive to covariance estimation error.  In this paper, we show that by imposing a block diagonal correlation structure, we can derive closed-form expressions for long-only minimum-variance portfolios that make this fragility explicit. 
These analytical solutions reveal that fragility is driven by threshold effects arising from the interaction between correlation structure and the assets' volatilities.  Motivated by the latter insight, we propose robustification approaches that can be interpreted as structured shrinkage schemes that selectively attenuate unstable coupling while preserving the dominant risk structure. 
Unlike global shrinkage techniques, the proposed corrections are analytically grounded, require minimal tuning, and remain closely aligned with the minimum-variance solution. The framework extends naturally to general covariance matrices through clustering-based approximations.
Empirical results on controlled simulations highlight a clear regime dependence. In homogeneous volatility settings, correlation-oblivious shrinkage achieves the best trade-off between risk and stability. In contrast, under heterogeneous volatility, correlation-aware shrinkage performs best by inducing sparsity and avoiding exposure to high-risk assets. Across regimes, the proposed methods consistently reduce out-of-sample variance and turnover relative to classical and clustering-based benchmarks, providing a principled and practical approach to robust portfolio construction.

\end{abstract}

\section{Introduction}

The portfolio selection problem introduced by  \citet{markowitz1952} provides a canonical framework for allocating capital across a universe of risky assets. In this work, we focus exclusively on the minimum-variance formulation, which seeks to construct a portfolio with the smallest possible return variance subject to full-investment and long-only constraints. Let $n \in \mathbb{N}$ denote the number of assets under consideration, and let $x \in \mathbb{R}^n$ denote the vector of portfolio weights, where each component $x_i$ represents the fraction of total capital invested in asset $i$. The long-only and budget constraints require that $x \ge 0$ (componentwise) and $\1\transp x = 1$, where $\1 \in \mathbb{R}^n$ denotes the vector of all ones.

Let $r\in\mathbb{R}^n$ denote the vector of asset returns and let $V \in \mathbb{R}^{n \times n}$ denote the covariance matrix of asset returns. The  variance of the portfolio with weights $x$ is $x\transp V x$. Thus the fully-invested long-only minimum-variance portfolio is the solution to the quadratic program
\begin{equation}\label{eq.minvar}
\begin{aligned}
\min_{x \in \mathbb{R}^n} \quad & x\transp V x, \\
\text{s.t.} \quad & \1\transp x = 1, \\
& x \ge 0.
\end{aligned}
\end{equation}
This problem is convex and thus admits a global minimum since the covariance matrix 
$V$ is symmetric positive semidefinite.  

A well-documented limitation of the Markowitz framework is the pronounced sensitivity of optimized portfolios to errors in the estimation of the covariance matrix \cite{michaud1989markowitz}. 
Early empirical and analytical studies noted that 
small perturbations in the inputs of mean-variance portfolio optimization can lead to disproportionately large changes in the optimal allocation and severe degradation of out-of-sample risk performance 
\cite{best1991sensitivity, chopra1993effect, muller1993empirical, michaud2001efficient, brandt2010portfolio, chen2016efficient}. 
This fragility persists even in the minimum-variance setting, where expected returns are excluded, and thus cannot be attributed solely to errors in mean estimation.
In practice, minimum-variance portfolios constructed from empirical covariances are often highly concentrated, unstable through time, and frequently dominated by simple diversified allocations in realized risk \cite{clarke2006minimum, demiguel2009optimal, bailey2012balanced, lopez2016building, roncalli2013introduction}. From an economic perspective, this behavior contradicts the primary motivation for minimum-variance investing, e.g., to obtain a stable and implementable allocation that relies on comparatively well-estimated inputs. 
This raises a central question to this paper: where exactly in the covariance matrix does this instability originate, and how is it transmitted into portfolio weights?

The origin of this instability is already visible in the analytical form of the unconstrained minimum-variance solution, which is proportional to  $V^{-1} \1$ \cite{markowitz1952, clarke2011minimum}.
Because the optimizer depends explicitly on the inverse covariance matrix, perturbations in $V$ are mapped nonlinearly into the portfolio through an operator that amplifies estimation error when the covariance matrix is ill-conditioned or nearly singular.
Such ill-conditioning is not an artifact of the modeling choice but an inherent feature of empirical asset returns. Financial datasets are characterized by strong cross-asset correlations, near-collinearity, low effective rank in the covariance structure, and short history that limits sample sizes. These structural properties yield estimates that are intrinsically noisy and highly unstable \cite{pedersen2021enhanced}.
High-dimensional covariance estimation provides complementary evidence for this mechanism: when the cross-sectional dimension is not negligible relative to the sample length, sample eigenvalues are systematically distorted and spurious small eigenvalues emerge that dominate the inverse, while the associated eigenvectors are themselves noisy and unstable \cite{karoui2008spectrum, bun2017cleaning, hall2005geometric, shen2016statistics, aoshima2018survey}.
Since the minimum-variance portfolio depends on the full spectral decomposition of $V^{-1}$ both sources of noise propagate through the optimization and produce extreme and rapidly varying allocations \cite{karoui2010high, karoui2013realized}. 
Despite substantial progress in documenting and partially characterizing these effects across theoretical, empirical, and high-dimensional asymptotic settings, a complete and unified explanation of the instability of minimum-variance portfolios remains elusive, and identifying the precise structural mechanisms through which the covariance matrix generates this sensitivity continues to be an active area of research \cite{bandi2012tractable,goldberg2022dispersion, gurdogan2024quadratic, shkolnik2025portfolio}. 
In particular, the existing literature explains why the optimizer is sensitive, but does not isolate which economically meaningful features of the covariance structure govern how this instability propagates into the portfolio weights. Nor does it provide actionable guidance for mitigating the effect in practice. This gap motivates the analysis that follows.

Empirical evidence shows that financial assets are far from uniformly correlated. Markets are naturally organized into nested groups, such as sectors, industries, regions, and investment styles, where assets within a group tend to move together more strongly than with assets outside the group \cite{tumminello2005tool, tumminello2011community, musmeci2015relation}. This hierarchical or block-like organization implies that the covariance matrix is structured: certain directions of risk are concentrated within clusters, while cross-cluster dependencies are weaker. Ignoring this structure, treating all assets symmetrically, exacerbates the instability problem highlighted in the minimum-variance optimization \cite{michaud1989markowitz, demiguel2009optimal, lopez2016building, bailey2012balanced}. 
Recognizing clustering as a stylized feature of financial markets highlights the challenge of understanding which parts of the covariance matrix drive instability and suggests that this structure could be key to constructing robust, interpretable portfolios.

To address the instability of minimum-variance portfolios, the literature has developed several classes of remedies. Robust optimization provides formal protection against perturbations by optimizing portfolio weights over uncertainty sets for the covariance matrix and expected returns \cite{bental2002robust,bertsimas2011theory, popescu2007robust,goldfarb2003robust,
glasserman2013robust,
tutuncu2004robust,
ghaoui2003worst, 
boyd2024markowitz,cornuejols2026addressing}.
While this approach provides worst-case performance guarantees, it does not identify which directions of the covariance structure drive the instability. Moreover, its practical implementation requires specifying the geometry, scale, and error structure of the uncertainty set for the covariance matrix or expected returns; design choices that are difficult to infer from finite samples, often subjective, and in many cases as challenging as estimating the covariance itself \cite{ fabozzi2010robust, bertsimas2018data, stubbs2005computing}.
To circumvent the subjectivity of static uncertainty sets, operations research literature increasingly employs distributionally robust optimization to immunize portfolios against empirical sample biases via moment bounds or optimal transport \cite{delage2010distributionally, natarajan2008incorporating, blanchet2019quantifying, gao2023distributionally}.
Shrinkage methods and norm-constrained machine learning approaches stabilize the estimation problem by blending the sample covariance with structured targets or penalizing dense allocations, improving conditioning and damping sampling noise \cite{ledoit2004honey, connor1993test, james1961estimation, ledoit2004well, chen2010shrinkage, longerstaey1996riskmetricstm, ledoit2012nonlinear, ledoit2017nonlinear, ledoit2020analytical, demiguel2009generalized, ban2018machine, bertsimas2022scalable}. Similar target-implied stabilization can be extracted via inverse optimization frameworks \cite{bertsimas2012inverse}.
These methods reduce extreme weight fluctuations and turnover, but they operate globally and are target-driven, without directly addressing the fragile components responsible for instability. Finally, clustering and hierarchical allocation approaches exploit the empirical organization of financial assets into sectors, industries, and other nested groups \cite{tumminello2005tool, tumminello2011community, musmeci2015relation, marti2021review, jain2019can, lau2017cross}. 
By imposing multiscale or block-hierarchical structures on the correlation matrix, these methods produce empirically robust and more diversified portfolios, yet they offer no closed-form solution, yield fully dense portfolios, and do not provide an explicit link between the identified clusters and the underlying instability \cite{lopez2016building, raffinot2018hierarchical, raffinot2018hcaa}.
Across shrinkage and hierarchical approaches, a common pattern emerges: instability is mitigated indirectly through smoothing, aggregation, or structural regularization. This often improves robustness in practice, but it does so without explicitly distinguishing between stabilizing the estimator and preserving the directions that matter for optimal allocation. As a result, reductions in fragility can come at the cost of latent suboptimality in risk, in the sense that portfolios become more stable but not necessarily closer to the true minimum-variance solution.

In this paper, we derive closed-form expressions for minimum-risk portfolios under a block-diagonal correlation structure, which serves as a tractable approximation to clustered dependence patterns in financial markets. This analytical solution makes explicit how the inverse covariance operator amplifies estimation noise in portfolio weights, and thereby identifies the components of the covariance structure responsible for instability. 
This characterization directly motivates a simple corrective mechanism: selectively attenuating the unstable components revealed by the solution, rather than applying global or target-based regularization. The resulting procedure can be interpreted as a structured form of covariance shrinkage, but one that is mechanism-driven and explicitly linked to the portfolio mapping induced by the block structure.
Relative to hierarchical and clustering-based approaches, the method is fully analytical and avoids recursive constructions while retaining interpretability in terms of latent market structure \cite{lopez2016building, raffinot2018hierarchical, raffinot2018hcaa}. It yields portfolios that remain close to the minimum-variance solution in-sample while substantially improving stability out-of-sample, reconciling the optimality of the Markowitz formulation with the robustness typically associated with structure-imposing methods.

 The remainder of the paper is organized as follows. Section \ref{sec:rob_reparam} introduces the notation used throughout the paper, including a reparameterization of the original problem. Section \ref{sec:constant_correlation} studies instability under a constant correlation structure, providing closed-form optimality conditions and illustrative examples. Section \ref{sec:structured_correlation} extends the analysis to a block-constant correlation structure, again deriving closed-form optimality results and associated examples. Section \ref{sec:rob_robustification} presents the proposed robustification approaches, built upon the closed-form insights developed in the previous sections. Section \ref{sec:results} reports numerical experiments that benchmark the methods in terms of robustness and optimality. Finally, Section \ref{sec:conclusions} concludes and outlines directions for future research.

\section{Preliminaries and Reparametrization} \label{sec:rob_reparam}

Following the notation in \citet{pedersen2021enhanced}, we decompose the covariance matrix as
\[
V = \Sigma \, \Omega \, \Sigma,
\]
where $\Sigma = \mathrm{Diag}(\sigma_1, \ldots, \sigma_n) \in \mathbb{R}^{n \times n}$ is the diagonal matrix of asset volatilities, with $\sigma_i > 0$ denoting the standard deviation of asset $i$, and $\Omega \in \mathbb{R}^{n \times n}$ is the correlation matrix.  Thus $\Omega_{ii} = 1$ for all $i$ and $\Omega$ is symmetric positive semidefinite.  
This factorization separates specific risk, encoded by $\Sigma$, from the correlation dependence structure, encoded by $\Omega$.  

For notational convenience, define the inverse-volatility vector $\theta \in \mathbb{R}^n$ as
\begin{equation}\label{eq.theta}
\theta_i = \sigma_i^{-1}, \quad i=1,\ldots,n,
\end{equation}
and let $\Theta = \mathrm{Diag}(\theta) \in \mathbb{R}^{n \times n}$. We use $(\cdot)^+$ and $(\cdot)^-$ for componentwise positive and negative parts.

Note that a simple change of variables in problem \eqref{eq.minvar} reveals that its solution can be expressed in terms of a correlation-based problem as formalized in the following lemma and proved in Appendix \ref{app:lemma.correlation.reformulation}.

\begin{restatable}{lemma}{lemmacorrelationreformulation}\label{lemma:correlation.reformulation}
Let $V = \Sigma \Omega \Sigma$, with $\theta = \sigma^{-1}$ and $\Theta = \mathrm{Diag}(\theta)$. Then the solution of \eqref{eq.minvar} can be written as
\begin{equation}\label{eq.sol.x}
x = \frac{\Theta y}{\1\transp \Theta y},
\end{equation}
where $y$ solves
\begin{equation}\label{eq.min.correl}
\begin{aligned}
\min_{y \in \mathbb{R}^n} \quad & \frac{1}{2} y\transp \Omega y - \theta\transp y \\
\text{s.t.} \quad & y \ge 0.
\end{aligned}
\end{equation}
\end{restatable}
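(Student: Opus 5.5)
The argument has two parts. First, a change of variables turns \eqref{eq.minvar} into a problem over the correlation matrix $\Omega$. Second, a comparison of KKT systems shows that a normalized solution of \eqref{eq.min.correl} solves that problem.

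\textbf{Change of variables.} Set $z = \Sigma x$, so that $x = \Theta z$. Then
\[
x\transp V x = z\transp \Omega z, \qquad \1\transp x = \theta\transp z .
\]
Since $\Sigma$ is diagonal with positive entries, $x \ge 0$ if and only if $z \ge 0$. Hence \eqref{eq.minvar} is equivalent to
\[
\min_z \; z\transp \Omega z \quad \text{s.t.} \quad \theta\transp z = 1, \; z \ge 0.
\]

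\textbf{KKT conditions of the two problems.} Both problems are convex with linear constraints, so KKT conditions are necessary and sufficient for optimality.

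For \eqref{eq.min.correl}, a point $y$ is optimal if and only if
\[
y \ge 0, \qquad \Omega y - \theta \ge 0, \qquad y\transp(\Omega y - \theta) = 0.
\]
Multiplying out the complementarity condition gives $y\transp \Omega y = \theta\transp y$.

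For the reformulated problem in $z$, a point $z$ is optimal if and only if there exist $\lambda \in \mathbb{R}$ and $\mu \ge 0$ with
\[
\Omega z - \lambda\theta = \mu, \qquad \mu\transp z = 0, \qquad \theta\transp z = 1, \qquad z \ge 0
\]
(the factor $2$ from the gradient is absorbed into the multipliers).

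\textbf{Nondegeneracy.} Problem \eqref{eq.min.correl} has a solution: the objective is convex and bounded below on $y \ge 0$, and it is a quadratic program. The main obstacle is to show that any solution satisfies $y \neq 0$, so that the normalizations below are legitimate. The KKT condition at $y = 0$ would require $-\theta \ge 0$, which is impossible because $\theta > 0$. So $y \neq 0$ and $y \ge 0$, which gives
\[
\theta\transp y = \1\transp \Theta y > 0 .
\]
Combined with $y\transp \Omega y = \theta\transp y$, this also gives $y\transp \Omega y > 0$.

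\textbf{Building the solution.} Put $t = \theta\transp y > 0$, $z = y/t$, $\lambda = 1/t$ and $\mu = (\Omega y - \theta)/t \ge 0$. Then:
\begin{itemize}
\item $\theta\transp z = 1$ and $z \ge 0$;
\item $\Omega z - \lambda\theta = \mu$;
\item $\mu\transp z = y\transp(\Omega y - \theta)/t^2 = 0$.
\end{itemize}
So the KKT system of the $z$-problem holds at $z$, and $z$ is optimal for it. Transforming back,
\[
x = \Theta z = \frac{\Theta y}{\theta\transp y} = \frac{\Theta y}{\1\transp \Theta y},
\]
which is \eqref{eq.sol.x}.

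\textbf{Alternative route.} One could instead use the homogeneity of $z\transp\Omega z$ and minimize over the scaling $y = s z$ in \eqref{eq.min.correl}. For fixed direction $z$ the best scale is $s = \theta\transp z / (z\transp \Omega z)$, and the resulting optimal value is $-\tfrac12 (\theta\transp z)^2 / (z\transp\Omega z)$. So \eqref{eq.min.correl} amounts to minimizing $z\transp\Omega z/(\theta\transp z)^2$, which is the $z$-problem. The KKT comparison above is cleaner, and it avoids a separate treatment of directions with $z\transp\Omega z = 0$.
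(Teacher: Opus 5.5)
Your proof is correct under the positive definiteness of $\Omega$ that the paper's proof also assumes, but it runs in the opposite direction from the paper's.

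\textbf{The paper's route.} It starts from a minimizer $z$ of the reformulated problem in $z$. That minimizer exists because the feasible set is compact. Multiplying $\Omega z = \lambda\theta + \mu$ by $z\transp$ gives $\lambda = z\transp\Omega z$. Positive definiteness of $\Omega$ then yields $\lambda>0$, and the paper checks that $y = z/\lambda$ satisfies the KKT system of \eqref{eq.min.correl}.

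\textbf{Your route.} You start from a solution $y$ of \eqref{eq.min.correl} and get $y\neq 0$ from $\theta>0$ alone. You then rescale by $t=\theta\transp y$ to produce a KKT point of the $z$-problem.

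\textbf{What each buys.} The paper obtains existence of a solution to \eqref{eq.min.correl} for free, since $y$ is built from $z$. It pays for this by needing $\lambda>0$, which is exactly where definiteness enters. Your nondegeneracy step needs no definiteness. Your direction also matches how the lemma is used later: solve \eqref{eq.min.correl} in closed form, then normalize via \eqref{eq.sol.x}. Because both problems are strictly convex under positive definiteness, their minimizers are unique and the two directions are equivalent.

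\textbf{The step to tighten.} You assert that the objective of \eqref{eq.min.correl} is \emph{bounded below} on $y\ge 0$ without justification. This is precisely where definiteness is hidden in your argument. For a merely positive semidefinite correlation matrix it can fail. Take $n=2$ and $\Omega_{12}=-1$: then $y=(s,s)$ sends the objective to $-\infty$, while \eqref{eq.minvar} has a zero-variance solution $x\propto\theta$, so the lemma itself is false there. State the assumption explicitly and use
\[
\tfrac12 y\transp\Omega y - \theta\transp y \;\ge\; \tfrac12\lambda_{\min}(\Omega)\,\|y\|^2 - \|\theta\|\,\|y\|.
\]
This gives coercivity and hence existence of a minimizer, which your argument then takes from there. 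Note also that your closing remark says the KKT route avoids treating directions with $z\transp\Omega z = 0$. It does not avoid them: those directions are exactly what the existence claim has to rule out.
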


This reparametrization shows that the long-only minimum-variance portfolio is fully determined by the correlation structure and inverse volatilities, while separating the budget constraint from the dependence structure. In particular, the support of $x$ coincides with the support of $y$ ($x_i=0 \iff y_i=0$), so sparsity and instability properties can be studied directly in the transformed problem.

The problem \eqref{eq.min.correl} is a convex quadratic program with nonnegativity constraints, and its optimality conditions are
\begin{subequations}\label{eq.kkt}
\begin{align}
\Omega y &= \theta + \mu, \label{eq.kkt_a} \\
y &\ge 0, \quad \mu \ge 0, \quad \mu\transp y = 0, \label{eq.kkt_b}
\end{align}
\end{subequations}
where $\mu$ are Lagrange multipliers associated with the inequality constraints.

This formulation will serve as the basis for deriving closed-form solutions under structured correlation matrices and for characterizing stability properties of the long-only minimum-variance portfolio.

\section{Instability Under a Constant-Correlation Structure}\label{sec:constant_correlation}

In this section, we focus on constant-correlation matrices as a building block to understand the behavior of long-only minimum-variance portfolios. 
Constant-correlation matrices provide a simple and analytically tractable setting, particularly relevant for assets within a single market group (such as a sector, industry, region, or investment style) where correlations tend to be relatively high and uniform. 
Furthermore, the constant-correlation setting admits closed-form solutions for the transformed problem  \eqref{eq.min.correl}, making it possible to explicitly characterize regimes in which the portfolio is sensitive to small changes in parameters or estimation noise.

Formally, we consider the case  where the correlation matrix $\Omega$ takes the form   $\Omega = (1-\rho)I + \rho \1\1\transp$ for some $\rho > {-\frac{1}{n-1}}$ , or equivalently, when all the non-diagonal entries of $\Omega$ are all equal to $\rho$:
\begin{equation}\label{eq.single.block}
\Omega = \matr{1 & \rho & \cdots & \rho \\ \rho & 1 & \cdots & \rho \\ \vdots & \vdots & \ddots & \vdots \\ \rho & \rho & \cdots  & 1}. 
\end{equation}
For notational convenience, we index assets so that volatilities are ordered non-decreasingly,
\begin{equation}\label{eq.order}
\sigma_1 \le \cdots \le \sigma_n \iff \theta_n \le \cdots \le \theta_1.
\end{equation}

This constant-correlation structure can be interpreted as a single ``block'' within a broader block-diagonal model, where assets in the same group exhibit homogeneous dependence. Analyzing this setting in isolation provides a tractable foundation for understanding how correlation-driven coupling affects portfolio allocation and fragility before introducing interactions across multiple blocks.

\subsection{Closed-Form Solution}

The constant-correlation structure allows a closed-form solution to the transformed problem \eqref{eq.min.correl}, as stated in the proposition below {and proved in Appendix~\ref{app:proof.prop.one.block}}.

\begin{restatable}{proposition}{propositiononeblock}\label{prop.one.block} Suppose the correlation matrix $\Omega$ is of the form~\eqref{eq.single.block} for some ${-\frac{1}{n-1}}< \rho <1$.  Then the solution to~\eqref{eq.min.correl} is
\begin{equation}\label{eq.sol.y}
y = \frac{1}{1-\rho}\left(\theta - \bar\theta \1 \right)^+,
\end{equation}
where the threshold $\bar\theta$ is the solution to the following fixed-point problem
\begin{equation}\label{eq.fixed.point}
\bar\theta = 
\frac{\rho}{1-\rho} \, \1\transp \left(\theta - \bar\theta \1 \right)^+. 
\end{equation}
Furthermore, if~\eqref{eq.order} holds then
the solution to the fixed-point problem~\eqref{eq.fixed.point} is
\begin{equation}\label{eq.fixed.point.sol}
\bar \theta = \frac{\rho}{1+(i-1)\rho}\sum_{j=1}^i\theta_j,
\end{equation}
where $i$ is the largest index such that 
\begin{equation}\label{eq.largest.i}
\rho \le \frac{\theta_i}{\theta_i + \sum_{j=1}^{i-1} (\theta_j - \theta_i)}.
\end{equation}
\end{restatable}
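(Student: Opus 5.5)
Since $-\frac{1}{n-1}<\rho<1$, the matrix $\Omega=(1-\rho)I+\rho\1\1\transp$ is positive definite: its eigenvalues are $1-\rho>0$ and $1+(n-1)\rho>0$. Hence \eqref{eq.min.correl} is strictly convex, and any pair $(y,\mu)$ satisfying the KKT conditions \eqref{eq.kkt} gives the unique solution. The plan is to guess $y$ and $\mu$ in the form \eqref{eq.sol.y}, verify the KKT conditions, and then solve the scalar fixed-point equation \eqref{eq.fixed.point} explicitly.

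\textbf{KKT verification.} Write $\Omega y=(1-\rho)y+\rho(\1\transp y)\1$. Suppose $\bar\theta$ satisfies \eqref{eq.fixed.point}, and set $y=\frac{1}{1-\rho}(\theta-\bar\theta\1)^+$. Then $\rho\,\1\transp y=\bar\theta$, so
\[
\Omega y=(\theta-\bar\theta\1)^++\bar\theta\1=\theta+(\theta-\bar\theta\1)^-.
\]
Taking $\mu=(\theta-\bar\theta\1)^-\ge 0$ gives \eqref{eq.kkt_a}. Moreover $y\ge0$, and $\mu_iy_i=0$ because positive and negative parts have disjoint supports. So \eqref{eq.kkt} holds. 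This step is routine. The one subtlety is that the fixed point must exist and be unique, and uniqueness of $y$ from strict convexity settles the second point.

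\textbf{Solving the fixed point.} Assume \eqref{eq.order}. Guess that the support of $(\theta-\bar\theta\1)^+$ is $\{1,\dots,i\}$. On that support \eqref{eq.fixed.point} is linear:
\[
\bar\theta=\frac{\rho}{1-\rho}\Bigl(\sum_{j\le i}\theta_j-i\bar\theta\Bigr),
\]
which rearranges to \eqref{eq.fixed.point.sol}. For the guess to be consistent we need $\theta_i>\bar\theta$ (or $\ge$, with ties harmless) and $\theta_{i+1}\le\bar\theta$.

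Substituting \eqref{eq.fixed.point.sol}, the condition $\bar\theta\le\theta_i$ becomes $\rho\sum_{j\le i}\theta_j\le(1+(i-1)\rho)\theta_i$, that is,
\[
\rho\Bigl(\theta_i+\sum_{j<i}(\theta_j-\theta_i)\Bigr)\le\theta_i,
\]
which is \eqref{eq.largest.i}. Dividing requires the sign of the denominator $\theta_i+\sum_{j<i}(\theta_j-\theta_i)$, which is positive since $\theta_j\ge\theta_i>0$.

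\textbf{Main obstacle.} The hard part is showing that the largest index $i$ satisfying \eqref{eq.largest.i} also satisfies the exclusion condition $\theta_{i+1}\le\bar\theta_i$, where $\bar\theta_k$ denotes the value in \eqref{eq.fixed.point.sol} with $k$ in place of $i$.

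I would first show a monotonicity fact: the map $k\mapsto \rho\bigl(\theta_k+\sum_{j<k}(\theta_j-\theta_k)\bigr)-\theta_k$ is nondecreasing in $k$. Going from $k$ to $k+1$, each term $\theta_j-\theta_k$ grows because $\theta_{k+1}\le\theta_k$. So the set of indices satisfying \eqref{eq.largest.i} is an initial segment $\{1,\dots,i\}$, and it is nonempty because $k=1$ gives $\rho\le1$.

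Next, failure of \eqref{eq.largest.i} at $i+1$ means $\bar\theta_{i+1}>\theta_{i+1}$. I then need the identity
\[
(1+i\rho)\bar\theta_{i+1}=(1+(i-1)\rho)\bar\theta_i+\rho\theta_{i+1}.
\]
This expresses $\bar\theta_{i+1}$ as a convex-type combination of $\bar\theta_i$ and $\theta_{i+1}$, with care needed when $\rho<0$. From it, $\bar\theta_{i+1}>\theta_{i+1}$ is equivalent to $\bar\theta_i>\theta_{i+1}$.

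The negative-$\rho$ case is where I expect the sign bookkeeping to be delicate. When $\rho\le0$, every index satisfies \eqref{eq.largest.i}, so $i=n$ and the exclusion condition is vacuous. I would therefore treat $\rho\le 0$ separately and run the identity argument only for $\rho>0$.
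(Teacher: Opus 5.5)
Your proof is correct. It runs the paper's computation in the opposite logical direction and adds a consistency argument the paper skips.

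\textbf{Direction of the KKT argument.} The paper starts from an optimal $y$, sets $\bar\theta:=\rho\,\1^\top y$, and uses complementarity to force $(1-\rho)y=(\theta-\bar\theta\1)^+$. That gives existence of a fixed point for free, from existence of the minimizer. It never shows, however, that the fixed point is unique, or that an arbitrary solution of \eqref{eq.fixed.point} reproduces the minimizer. You instead build $y$ from any fixed point and verify the KKT conditions. Strict convexity then yields uniqueness of both $y$ and $\bar\theta$, since any fixed point equals $\rho\,\1^\top y^*$. Existence comes from your explicit construction after sorting $\theta$.

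\textbf{Identifying $i$.} This is the more substantive difference. Write $\bar\theta_k=\frac{\rho}{1+(k-1)\rho}\sum_{j\le k}\theta_j$. The paper takes $i$ to be the largest index with $\bar\theta\le\theta_i$ for the fixed value $\bar\theta$. It then simply asserts that this $i$ is the largest index with $\bar\theta_i\le\theta_i$, even though $\bar\theta_k$ varies with the index. That substitution is exactly the consistency issue you flag. Your identity $(1+i\rho)\bar\theta_{i+1}=(1+(i-1)\rho)\bar\theta_i+\rho\,\theta_{i+1}$ closes it: it turns failure of \eqref{eq.largest.i} at $i+1$ into the exclusion condition $\theta_{i+1}<\bar\theta_i$.

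\textbf{Minor remarks.}
\begin{itemize}
\item In your direction the initial-segment lemma is not needed, because maximality of $i$ alone gives failure at $i+1$. It, or an induction using the same identity, would be needed to make the paper's direction rigorous.
\item Your stated justification for monotonicity only works for $\rho\ge 0$. It holds for all admissible $\rho$, since the increment equals $(1+(k-1)\rho)(\theta_k-\theta_{k+1})\ge 0$.
\item Your caution about $\rho<0$ is unnecessary. For $i\le n-1$ and every admissible $\rho$, both $1+(i-1)\rho>0$ and $1+i\rho>0$. Hence $\bar\theta_{i+1}>\theta_{i+1}\iff\bar\theta_i>\theta_{i+1}$ holds throughout.
\item You should state explicitly that $1+(i-1)\rho>0$, which follows from $\rho>-\frac{1}{n-1}$, when you clear that denominator in deriving \eqref{eq.largest.i}.
\end{itemize}
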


The expressions~\eqref{eq.sol.y} and~\eqref{eq.fixed.point.sol} characterize the regimes in which the solution~\eqref{eq.sol.x} to the long-only minimum-variance problem~\eqref{eq.minvar} becomes fragile. 
Let $i$ be the largest index such that $\bar \theta \le \theta_i$ or the equivalent inequality~\eqref{eq.largest.i} holds. 
Fragility arises when $i > 1$ and $\bar \theta$ is close to $\theta_{i}$ or when $i<n$ and $\bar \theta$ is close to $\theta_{i+1}$. In either of these cases a small perturbation in 
$\rho$, $\sigma_i$, or $\sigma_{i+1}$,  can shift the active set in~\eqref{eq.sol.y}. 
Such a shift changes which components are truncated by the positive-part operator, leading to a discontinuous change in the portfolio weights~\eqref{eq.sol.x}. 
The following examples make this observation explicit and connect the analytical characterization with the behavior of estimated portfolios.

\subsection{Illustrative Examples} \label{sec:single_examples}

The closed-form expressions derived above make it possible to explicitly identify parameter regimes in which the long-only minimum-variance portfolio is fragile or robust. 
Throughout the manuscript, we will provide simple examples that illustrate these regimes and connect the analytical thresholds with observable portfolio behavior.
Each analytical example is accompanied by a corresponding numerical experiment, showing how proximity to the fixed-point threshold translates into abrupt changes in portfolio weights and loss of out-of-sample optimality.

To illustrate the fragility mechanism, we conduct controlled simulations in which the true covariance matrix $V$ is known and returns are generated from a multivariate normal distribution with covariance $V$. All experiments follow the same evaluation protocol: portfolios are estimated using a rolling window of $w = 63$ trading days (approximately one quarter) and rebalanced daily over $T = 250$ out-of-sample periods.
At each rebalancing date $t$, we compute the realized portfolio variance $x_t^\top V x_t$ and benchmark it against the minimum attainable variance under the true covariance $V$:
\[
\sigma^2(V) := 
\min\{x^\top V x: x \in \mathbb{R}^n_+, \1^\top x = 1\}.
\]
The cumulative deviation from this benchmark is summarized by the \emph{area} metric,
\[
\text{Area} := \sum_{t=1}^{T} \left(x_t^\top V x_t - \sigma^2(V)\right),
\]
which corresponds to a discrete-time integral of excess variance over the evaluation horizon.
We also report the average per-period \emph{turnover},
\[
\text{Turnover} := \frac{1}{T-1} \sum_{t=2}^{T} \|x_t - x_{t-1}\|_1,
\]
and the average \emph{active-set size}, defined as the number of nonzero positions in $x_t$.

\begin{example}\label{ex.two} Suppose $n=2$ and $0< \sigma_1\le \sigma_2$.  
This two-asset example captures the simplest scenario of a 
correlated pair of assets, showing that portfolio fragility can still arise even in this minimal setting.  We distinguish two regimes.

\begin{itemize}
    \item \noindent{\bf Case 1:} $\rho \le \sigma_1/\sigma_2 = \theta_2/\theta_1$. Proposition~\ref{prop.one.block} implies that
$
\bar\theta=\frac{\rho}{1+\rho} \left(
\theta_1+\theta_2\right)$ and  thus 
\[
(1-\rho)y = \frac{1}{1+\rho}\matr{\theta_1 - \rho\theta_2 \\ \theta_2 -  \rho\theta_1} \; \text{ and } x = \frac{1}{\frac{\sigma_1}{\sigma_2} + \frac{\sigma_2}{\sigma_1}-2\rho}\matr{\frac{\sigma_2}{\sigma_1} - \rho \\ \frac{\sigma_1}{\sigma_2} - \rho}.
\]
When $\rho \approx \sigma_1/\sigma_2 \approx \sigma_2/\sigma_1$, the portfolio $x$ is fragile: the numerator and denominator in each of the components of $x$ are small and nearly symmetric, so small perturbations in $\sigma_1$, $\sigma_2$, or $\rho$ induce large changes in $x$. This corresponds to a regime in which $\bar\theta$ lies close to both $\theta_1$ and $\theta_2$, leading to instability in the active set.

\item \noindent{\bf Case 2:} $\rho > \sigma_1/\sigma_2$.  Proposition~\ref{prop.one.block} implies that $\bar\theta = \rho\theta_1$ and thus
\[
(1-\rho)y = \matr{ (1-\rho)\theta_1\\ 0} \text{ and } x = \matr{1 \\ 0}.
\]
Again, when $\rho \approx \sigma_1/\sigma_2 \approx \sigma_2/\sigma_1$, the portfolio $x$ is fragile: if parameters shift so that $\rho < \sigma_1/\sigma_2 < 1/\rho$, both assets become active (recovering Case 1), while if $\sigma_1/\sigma_2 > 1/\rho$, the active set flips entirely:
\[
(1-\rho)y = \matr{ 0 \\ (1-\rho)\theta_2} \text{ and } x = \matr{0 \\ 1}.
\]
\end{itemize}
These regimes can be summarized directly in terms of $\bar\theta$:
\begin{itemize}
    \item The portfolio is \textbf{fragile} when $\bar\theta \approx \theta_1 \approx \theta_2$, since small perturbations can switch the active set or significantly reweight exposures.
    \item The portfolio is \textbf{robust} when either $\bar\theta \ll \theta_1$ or $\bar\theta \gg \theta_2$, in which case the active set remains unchanged under small perturbations.
\end{itemize}

\end{example}

Using the expressions derived above we can illustrate the fragility regime predicted by the threshold $\bar\theta$ using this two-asset setting.
Consider $\sigma_1 = 0.99$, $\sigma_2 = 1$ and $\rho = 0.99$. In that case, the volatilities are nearly identical and the parameters lie close to the fragile regime predicted by the analytical solution e.g.,  $\rho=0.99=\frac{\sigma_1}{\sigma_2}\approx\frac{\sigma_2}{\sigma_1}$. 
As shown in Figure~\ref{fig:2x2_unstable}, small estimation errors induce abrupt allocation switches, high turnover, and a persistent gap between true and realized variances. 

\begin{figure}[!t]
    \centering
    \begin{subfigure}[t]{0.48\textwidth}
        \centering
        \includegraphics[width=\textwidth]{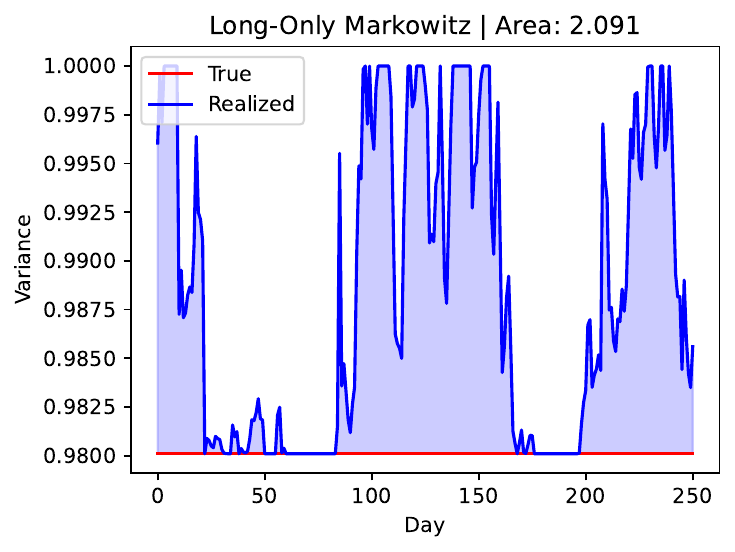}
        \caption{Evolution of true and realized portfolio variances. The shaded area quantifies the loss due to estimation noise, highlighting fragility.}
        \label{fig:2x2_unstable_var}
    \end{subfigure}
    \hfill
    \begin{subfigure}[t]{0.48\textwidth}
        \centering
        \includegraphics[width=\textwidth]{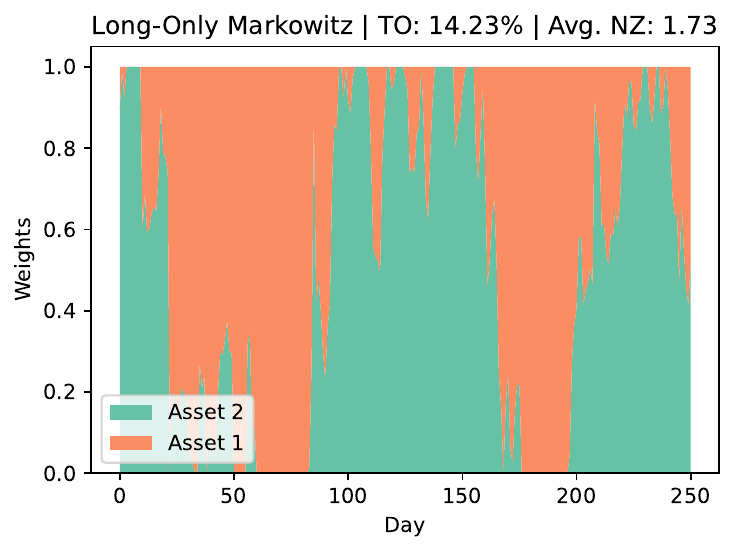}
        \caption{Time series of Markowitz weights. Frequent switches between assets result in 14.23\% turnover and an average of 1.73 active positions, illustrating instability.}
        \label{fig:2x2_unstable_por}
    \end{subfigure}

    \caption{Fragile long-only Markowitz portfolio with $\sigma_1 = 0.99$, $\sigma_2 = 1$, and $\rho = 0.99$.}
    \label{fig:2x2_unstable}
\end{figure}

It is easy to see that Example~\ref{ex.two} can be generalized as follows.

\begin{example}\label{ex.single.block}
Suppose~\eqref{eq.order} holds and that the solution of the fixed-point
equation~\eqref{eq.fixed.point} is given by~\eqref{eq.fixed.point.sol}
where $i$ is the largest index such that
\[
\bar\theta \le \theta_i .
\]
In this setting we obtain the following characterization.

\begin{itemize}
\item The long-only minimum-variance portfolio  is
\textbf{fragile} if $i > 1$ and 
$
\bar\theta \approx \theta_i $ or if $i < n$ and $
\bar\theta \approx \theta_{i+1}.  
$
In either of these cases a small perturbation in $\rho$ or in one of the volatilities $\sigma_i, \sigma_{i+1}$
changes the active set.

\item The long-only minimum-variance portfolio  is
\textbf{robust} in any other case, that is, when one of the following three possibilities occurs:
\begin{itemize}
\item[(a)] $i=1$ and $\bar\theta \gg \theta_2$
\item[(b)] $i=n$ and $\bar \theta \ll \theta_n$
\item[(c)] $1<i<n$ and $\theta_{i+1} \ll \bar\theta \ll \theta_i$
\end{itemize}
Indeed, in any of these three cases the active set remains unchanged under small parameter perturbations on $\rho$ or on any of the volatilities $\sigma_j$.

\end{itemize}

\end{example}

We now illustrate that fragility in higher dimensions does not require either extreme correlations or nearly identical volatilities.
Consider a three-asset long-only Markowitz portfolio with volatilities
$\sigma_1 = 1$, $\sigma_2 = 1.8$, and $\sigma_3 = 2$, and equicorrelation
$\rho = 0.48$. 
In this case the volatilities are clearly heterogeneous and the correlation is moderate, yet the parameters lie close to the threshold
$\theta_{i+1} < \bar\theta \le \theta_i$ identified in
Example \ref{ex.single.block}.
More specifically $\theta_{i+1}\approx\bar\theta$ as, 
\[
\theta_3 = \frac{1}{2} \approx\bar \theta = \frac{\rho}{1+\rho}\left(\theta_1+\theta_2\right) \approx 0.5045.
\]
The true and realized variances along with the time series of the portfolio weights and the associated turnover are shown in Figure \ref{fig:3x3_unstable}.

\begin{figure}[!t]
    \centering
    \begin{subfigure}[t]{0.48\textwidth}
        \centering
        \includegraphics[width=\textwidth]{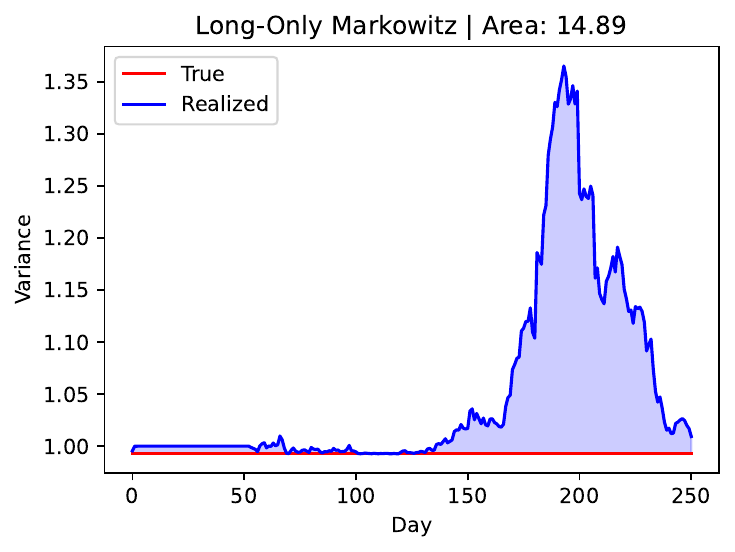}
        \caption{True  and realized portfolio variances. 
        The shaded area  remains significant because the parameters lie close to the fixed-point threshold.}
        \label{fig:3x3_unstable_var}
    \end{subfigure}
    \hfill
    \begin{subfigure}[t]{0.48\textwidth}
        \centering
        \includegraphics[width=\textwidth]{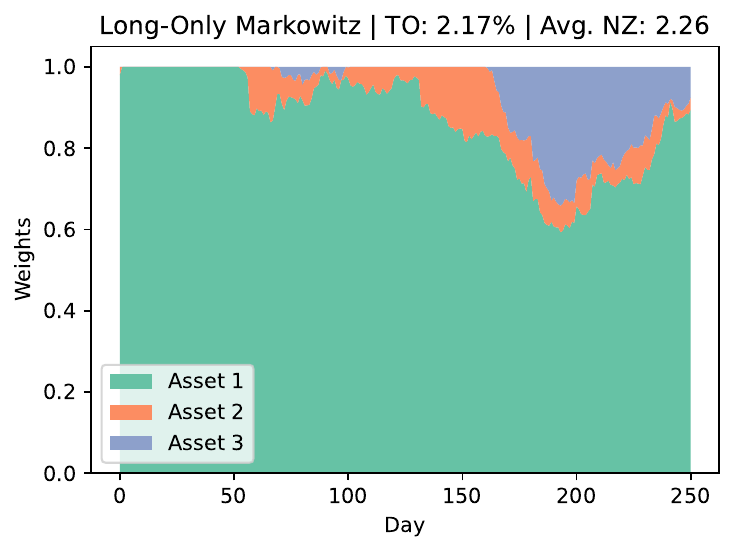}
        \caption{Time series of the estimated weights. 
        Repeated changes in the active set generate abrupt reallocations resulting 2.17\% turnover and an average of 2.26 active positions, despite the moderate correlation.}
        \label{fig:3x3_unstable_por}
    \end{subfigure}

    \caption{Fragile three-asset long-only Markowitz portfolio with 
    $\sigma_1 = 1$, $\sigma_2 = 1.8$, $\sigma_3 = 2$, and $\rho = 0.48$. 
    Although correlation is moderate and volatilities are not identical, the proximity to the fixed-point boundary produces instability in both allocations and out-of-sample variance.}
    \label{fig:3x3_unstable}
\end{figure}

Despite the absence of strong correlation, the estimated portfolio displays clear fragile behavior: the active set changes repeatedly, weights switch abruptly across assets, and a persistent gap emerges between true and realized variance.
Notably, the portfolio exhibits regimes with one, two, and three active positions, revealing a high sensitivity of the optimal solution to small perturbations.
This confirms that fragility is governed by the distance of $\bar{\theta}$ to its neighboring breakpoints, rather than by the magnitude of $\rho$ or by near-equality of the volatilities alone.

\section{Instability Under Block-Diagonal Constant-Correlation Structures}
\label{sec:structured_correlation}

The single-block constant-correlation model analyzed in the previous section serves as a tractable foundation for understanding how a common correlation component interacts with heterogeneous volatilities to determine both the active set and the fragility of the long-only minimum-variance portfolio. In real financial markets, however, assets are not organized as a single homogeneous group. Instead, they naturally form clusters (such as sectors, industries, regions, or investment styles) within which correlations tend to be relatively high and fairly uniform, while correlations across clusters are typically weaker and more diffuse. This empirical structure motivates the use of block-diagonal correlation matrices whose blocks follow a constant-correlation pattern.
Such block structures are widely employed as stylized representations of factor and hierarchical covariance models: a dominant market component induces a common level of comovement across all assets, while sector- or group-specific components generate clusters of strongly correlated securities.

Formally, we consider a universe partitioned into $K$ groups. Let $n = n_1+\cdots+n_K$, and 
denote by $\1_i \in \R^{n_i}$ the vector of all ones and by $I_i \in \R^{n_i\times n_i}$ the identity matrix of dimension $n_i$.
 Suppose that the correlation matrix
$\Omega \in \R^{n\times n}$ is of the form
\begin{equation}\label{eq.block.structure}
\Omega = (1-\rho)\Diag(\Omega_1 ,  \dots , \Omega_K) + \rho \1\1\transp,
\end{equation}
where each block $\Omega_i \in \R^{n_i\times n_i}$  is a constant-correlation matrix of the form $\Omega_i = (1-\rho_i)I_i + \rho_i \1_i \1_i\transp$ 
 {with $ \rho_i\in\left(-\frac{1}{n_i-1}, 1\right)$} for $i\in \{1,\dots,K\}$ and
 $-\left(
\sum_{i=1}^K
\frac{n_i}{1+(n_i-1)\rho_i}
-1
\right)^{-1}
< \rho < 1$.
The parameters $\rho_i$ capture within-block correlation for group $i$, while $\rho$ represents a common cross-asset correlation shared across all groups. These conditions ensure that $\Omega$ is a valid positive-definite correlation matrix (see Appendix~\ref{app,proof.pd}).

For a vector $y \in \R^n = \R^{n_1+\cdots+n_K}$, we denote by $y^{(i)} \in \R^{n_i}$ its $i$-th block, and similarly write $\sigma^{(i)}$ and $\theta^{(i)}$ for the $i$-th blocks of $\sigma$ and $\theta$ respectively.

\subsection{Closed-Form Solution}

The structure~\eqref{eq.block.structure} admits a closed-form solution to the transformed problem \eqref{eq.min.correl} that is directly analogous to the one-block case.
Here, the stability threshold for each group depends not only on its internal volatility configuration but also on the common cross-block correlation that couples all assets. Fragility is still driven by proximity to a single fixed-point boundary, but this boundary is now shaped by both within-block structure and interactions across blocks.
The following proposition formalizes this characterization; {the proof is provided in Appendix \ref{app:proof.prop.k.block}.}

\begin{restatable}{proposition}{propositionkblock}\label{prop.K.block} Suppose the correlation matrix $\Omega$ is of the form~\eqref{eq.block.structure}.
Then the solution to~\eqref{eq.min.correl} is block-separable and given for each group by
\begin{equation}\label{eq.sol.y.block}
y^{(i)} = \frac{1}{(1-\rho)(1-\rho_i)} \left(\theta^{(i)}  - (\hat \theta + \bar \theta_i) \1_i\right)^+,
\end{equation}
where the thresholds $\bar\theta_i$ and $\hat \theta$ are the solution to the following fixed-point problem
\begin{equation}\label{eq.fixed.point.block}
\begin{aligned}
\bar \theta_i &= \frac{\rho_i}{1-\rho_i} \cdot \1_i \transp\left(\theta^{(i)}  - (\hat \theta + \bar \theta_i) \1_i\right)^+\\
\hat \theta &=
\frac{\rho}{1-\rho}\cdot\sum_{j=1}^K \frac{1}{1-\rho_j} \cdot  \1_j \transp\left(\theta^{(j)}  - (\hat \theta + \bar \theta_j) \1_j\right)^+.
\end{aligned}
\end{equation}

\end{restatable}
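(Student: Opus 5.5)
The plan is to verify directly that the proposed $y$ satisfies the KKT conditions~\eqref{eq.kkt} of problem~\eqref{eq.min.correl}. Since $\Omega$ is positive definite (Appendix~\ref{app,proof.pd}), the problem is strictly convex, so the KKT conditions are sufficient and characterize the unique solution. I will argue in the same way as for Proposition~\ref{prop.one.block}, with one extra layer of coupling through $\rho$.

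\textbf{Step 1: compute $\Omega y$ blockwise.} Write $s_j := \1_j\transp y^{(j)}$. From~\eqref{eq.block.structure},
\[
(\Omega y)^{(i)} = (1-\rho)(1-\rho_i)\, y^{(i)} + (1-\rho)\rho_i s_i \1_i + \rho \Big(\sum_{j=1}^K s_j\Big)\1_i .
\]
Now take $y^{(i)}$ as in~\eqref{eq.sol.y.block}. Then $s_i = \frac{1}{(1-\rho)(1-\rho_i)}\1_i\transp(\theta^{(i)}-(\hat\theta+\bar\theta_i)\1_i)^+$. Using the fixed-point equations~\eqref{eq.fixed.point.block}, this gives the two identities
\[
(1-\rho)\rho_i s_i = \bar\theta_i, \qquad \rho\sum_j s_j = \hat\theta .
\]
For the second identity, note that the $\hat\theta$ equation is exactly $\rho\sum_j s_j$ once the factor $\frac{1}{1-\rho}\cdot\frac{1}{1-\rho_j}$ is matched. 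Substituting both identities,
\[
(\Omega y)^{(i)} = \big(\theta^{(i)}-(\hat\theta+\bar\theta_i)\1_i\big)^+ + (\hat\theta+\bar\theta_i)\1_i .
\]

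\textbf{Step 2: identify the multiplier.} Using $u^+ = u + u^-$, we get $\Omega y = \theta + \mu$ with
\[
\mu^{(i)} := \big(\theta^{(i)}-(\hat\theta+\bar\theta_i)\1_i\big)^- \ge 0 .
\]
We have $y\ge 0$ because $(1-\rho)(1-\rho_i)>0$. Complementarity $\mu\transp y=0$ holds because positive and negative parts have disjoint supports. Hence the KKT conditions hold, and $y$ is the solution. Block-separability is then evident: each block depends on the others only through the scalar $\hat\theta$.

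\textbf{Step 3: existence of a fixed point.} The main obstacle is not the verification but showing that~\eqref{eq.fixed.point.block} actually has a solution. I would derive this from the reverse direction. Let $y^*$ be the unique minimizer, which exists by strict convexity and coercivity. Define $\bar\theta_i := (1-\rho)\rho_i \1_i\transp y^{*(i)}$ and $\hat\theta := \rho\,\1\transp y^*$. The KKT condition $\Omega y^* = \theta+\mu$ reads blockwise
\[
(1-\rho)(1-\rho_i)y^{*(i)} = \theta^{(i)} - (\hat\theta+\bar\theta_i)\1_i + \mu^{(i)} .
\]
Complementarity with $y^*,\mu\ge 0$ forces $(1-\rho)(1-\rho_i)y^{*(i)} = (\theta^{(i)}-(\hat\theta+\bar\theta_i)\1_i)^+$. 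In detail: where $y^*_k>0$ we have $\mu_k=0$ and the right side is positive; where $y^*_k=0$ the quantity $\theta_k - (\hat\theta+\bar\theta_i)$ equals $-\mu_k\le 0$. Substituting back into the definitions of $\bar\theta_i$ and $\hat\theta$ yields precisely~\eqref{eq.fixed.point.block}.

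So the fixed point exists and reproduces $y^*$, and by uniqueness of $y^*$ any fixed-point solution gives the same $y$. The one care point is sign issues when $\rho$ or $\rho_i$ is negative. These only affect the scalars, never the positivity of $(1-\rho)(1-\rho_i)$, so the argument goes through unchanged.
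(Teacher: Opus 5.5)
Your proposal is correct, and your Step~3 is the paper's proof: define $\bar\theta_i := (1-\rho)\rho_i\1_i\transp y^{(i)}$ and $\hat\theta := \rho\1\transp y$ from the optimal $y$, read off the positive-part formula from complementarity, and substitute back to obtain the fixed-point system. Your Steps~1--2 add the converse, which the paper leaves implicit: any solution of the fixed-point system yields a KKT point, hence the unique minimizer, and this also makes the thresholds unique. So your argument matches the paper's and is slightly more complete.
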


The following corollary highlights two consistency checks obtained by suppressing either cross-block or within-block correlations.

\begin{corollary}\label{cor.sanity.block}
Consider the setting of Proposition~\ref{prop.K.block}. The expression for $y$ and the fixed-point thresholds $\hat \theta$ and  $\bar\theta_i$ reduce to familiar forms in the following extreme cases:

\begin{enumerate}
    \item If the cross-block correlation vanishes, $\rho = 0$, then $\hat \theta = 0$ and each block reduces to an independent constant-correlation problem with the blocks decoupled:
    \[
y^{(i)} =   \frac{1}{1-\rho_i} \left(\theta^{(i)}  -  \bar \theta_i \1_i\right)^+
\;
\text{ where } \qquad
  \bar \theta_i = \frac{\rho_i}{1-\rho_i} \, \1_i\transp \left(\theta^{(i)}  -  \bar \theta_i \1_i\right)^+.
    \]

    \item If the within-block correlations vanish, $\rho_j = 0$ for all $j=1,\dots,K$, then all blocks are coupled only through the global correlation $\rho$. In this case,
    \[
    y =   \frac{1}{1-\rho} \left(\theta  -  \hat \theta \1\right)^+
\;
\text{ where } \qquad
    \hat \theta = \frac{\rho}{1-\rho} \, \1\transp \left(\theta  -  \hat \theta \1\right)^+,
    \]
    recovering the single-block fixed-point problem from Proposition \ref{prop.one.block}.
\end{enumerate}
\end{corollary}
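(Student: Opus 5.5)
Both reductions follow by substituting the extreme parameter values into the formula \eqref{eq.sol.y.block} and the fixed-point system \eqref{eq.fixed.point.block} of Proposition~\ref{prop.K.block}. The only point that needs an argument is that the reduced system pins down the thresholds uniquely. Proposition~\ref{prop.K.block} already guarantees that the optimal $y$ has the stated form for \emph{some} solution $(\hat\theta,\bar\theta_1,\dots,\bar\theta_K)$ of \eqref{eq.fixed.point.block}. Because $\Omega$ is positive definite, problem \eqref{eq.min.correl} has a unique minimizer $y$. So it suffices to show that the reduced equations are consistent with the block formula and agree with the stated forms.

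\textbf{Case $\rho=0$.} The second equation of \eqref{eq.fixed.point.block} has the prefactor $\frac{\rho}{1-\rho}=0$, so $\hat\theta=0$ immediately. This needs no information about the positive parts. Substituting $\hat\theta=0$ and $1-\rho=1$ into \eqref{eq.sol.y.block} gives $y^{(i)}=\frac{1}{1-\rho_i}(\theta^{(i)}-\bar\theta_i\1_i)^+$. The first line of \eqref{eq.fixed.point.block} becomes exactly the stated single-block fixed point. Each block involves only its own $\theta^{(i)}$, $\rho_i$ and $\bar\theta_i$, so the blocks decouple.

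As a consistency check, $\Omega=\Diag(\Omega_1,\dots,\Omega_K)$ in this case, so \eqref{eq.min.correl} separates into $K$ independent problems. Each of these is covered by Proposition~\ref{prop.one.block}, whose formulas \eqref{eq.sol.y}--\eqref{eq.fixed.point} coincide with the above.

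\textbf{Case $\rho_j=0$ for all $j$.} The first equation gives $\bar\theta_j=0$ for every $j$, since its prefactor $\frac{\rho_j}{1-\rho_j}$ vanishes. Then \eqref{eq.sol.y.block} reads $y^{(i)}=\frac{1}{1-\rho}(\theta^{(i)}-\hat\theta\1_i)^+$ for every $i$. Stacking the blocks gives $y=\frac{1}{1-\rho}(\theta-\hat\theta\1)^+$.

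In the second equation every factor $\frac{1}{1-\rho_j}$ equals $1$. Summing $\1_j\transp(\cdot)^+$ over all blocks gives $\1\transp(\theta-\hat\theta\1)^+$, hence $\hat\theta=\frac{\rho}{1-\rho}\1\transp(\theta-\hat\theta\1)^+$. This is precisely \eqref{eq.fixed.point} with $\bar\theta$ renamed $\hat\theta$.

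It remains to confirm that this is legitimately the single-block setting. When all $\rho_j=0$, each $\Omega_j=I_j$, so $\Omega=(1-\rho)I+\rho\1\1\transp$, which is the form \eqref{eq.single.block}. The admissible range for $\rho$ also reduces correctly: the lower bound becomes $-(\sum_j n_j-1)^{-1}=-\frac{1}{n-1}$. Thus Proposition~\ref{prop.one.block} applies directly, and its solution coincides with ours.

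\textbf{Main obstacle.} The only subtle point is making sure the vanishing of a threshold is forced by the equations, not merely consistent with them. This holds because in each case the relevant equation has a zero prefactor multiplying a finite quantity. There is no real difficulty beyond careful substitution.
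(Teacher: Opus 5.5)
Your proposal is correct and matches the paper's argument. The paper gives no separate proof of Corollary~\ref{cor.sanity.block}; it treats the result as an immediate substitution of $\rho=0$ or $\rho_j=0$ into \eqref{eq.sol.y.block}--\eqref{eq.fixed.point.block}, exactly as you do, with $\hat\theta=0$ (resp.\ $\bar\theta_j=0$) forced by the zero prefactor. Your extra checks are also correct and go slightly beyond what the paper states: $\Omega$ becomes block diagonal or of the form \eqref{eq.single.block}, so Proposition~\ref{prop.one.block} applies directly, and the admissible range for $\rho$ collapses to $\rho>-\frac{1}{n-1}$.
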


Equations~\eqref{eq.sol.y.block}--\eqref{eq.fixed.point.block} characterize the fragile regimes of the block-diagonal model. 
For each group, the active set is determined by the location of the threshold $\bar\theta_i+\hat \theta$ within the ordered sequence $\theta^{(i)}$, but the thresholds are jointly pinned down by the cross-block correlation. 
Fragility occurs whenever a threshold $\bar\theta_i+\hat \theta$ is close to a component of $\theta^{(i)}$, as in this case small perturbations in the parameters propagate through the fixed-point system, changing the active assets and inducing discontinuous reallocations across and within groups. 
The next examples connect this analytical description with the behavior of estimated portfolios.

\subsection{Illustrative Examples} \label{sec:K.block.examples}

The block structure allows us to localize fragility at the group level while accounting for the global coupling induced by $\rho$. 
The next example shows how the relative position of each $\bar\theta_j$ within its ordered sequence $\theta^{(j)}$ determines whether the portfolio is stable, and how proximity to a breakpoint in any block is sufficient to generate discontinuous reallocations.

\begin{example}\label{ex.block.K} 
Suppose the correlation matrix $\Omega$ has the block form~\eqref{eq.block.structure},
and the assets in each block are ordered by non-increasing volatility, that is,
\[
\sigma^{(i)}_1\le  \cdots \le \sigma^{(i)}_{n_i}
\iff
\theta^{(i)}_{n_i} \le  \cdots \le \theta^{(i)}_1
\]
holds for each block $i=1,\dots,K$.
 
Let $\bar\theta = (\bar\theta_1,\dots,\bar\theta_K)$ be the solution of the
fixed-point system~\eqref{eq.fixed.point.block}, and for each block
$i=1,\dots,K$ let $j_i \in \{1,\dots,n_i\}$ satisfy
\[
\theta^{(i)}_{j_i+1} \le \hat \theta + \bar\theta_i < \theta^{(i)}_{j_i}.
\]
This yields the following blockwise characterization.

\begin{itemize}
\item The long-only minimum-variance portfolio is \textbf{fragile} if for at least one block $i$,
$
\hat \theta +\bar\theta_i \approx \theta^{(i)}_{j_i}
$ or $
\hat \theta +\bar\theta_i \approx \theta^{(i)}_{j_i+1},
$
since small perturbations in the volatilities or correlations may alter the
active set within that block.

\item The long-only minimum-variance portfolio is \textbf{robust} if for every block $i$,
$
\theta^{(i)}_{j_i+1} \ll \hat \theta +\bar\theta_i \ll \theta^{(i)}_{j_i},
$
so that the active sets remain invariant under small parameter changes.
\end{itemize}

\end{example}

Consider a seven-asset example with an explicit block structure. 
Let $\sigma = \1$, so that $V = \Omega$, and partition the assets into $K=3$ groups with
$\rho_1=\rho_3=0.9$, $\rho_2=0.8$, and cross-block correlation $\rho=0.6$. 
The corresponding $\hat\theta=0.829$  yields the fixed-point thresholds
$
\hat \theta\1 +\bar\theta = (0.991,\,0.981,\,0.993).
$
Since within each block the coordinates satisfy $\theta^{(i)}_j = 1 \approx \hat \theta +\bar\theta_i$, all groups operate close to their respective breakpoints. 
The system therefore lies in a fragile regime: small perturbations propagate through the coupled fixed-point equations and induce changes in the active sets across blocks.
The block covariance structure is displayed in Figure~\ref{fig:cov_ex3}, while the resulting variance dynamics and portfolio weights are reported in Figure~\ref{fig:block_ex3}. 
Consistent with the analytical prediction, estimation noise leads to frequent reallocations within and across groups, elevated turnover, and a persistent gap between true and realized variances.

\begin{figure}[h]
    \centering
        \includegraphics[width=0.6\textwidth]{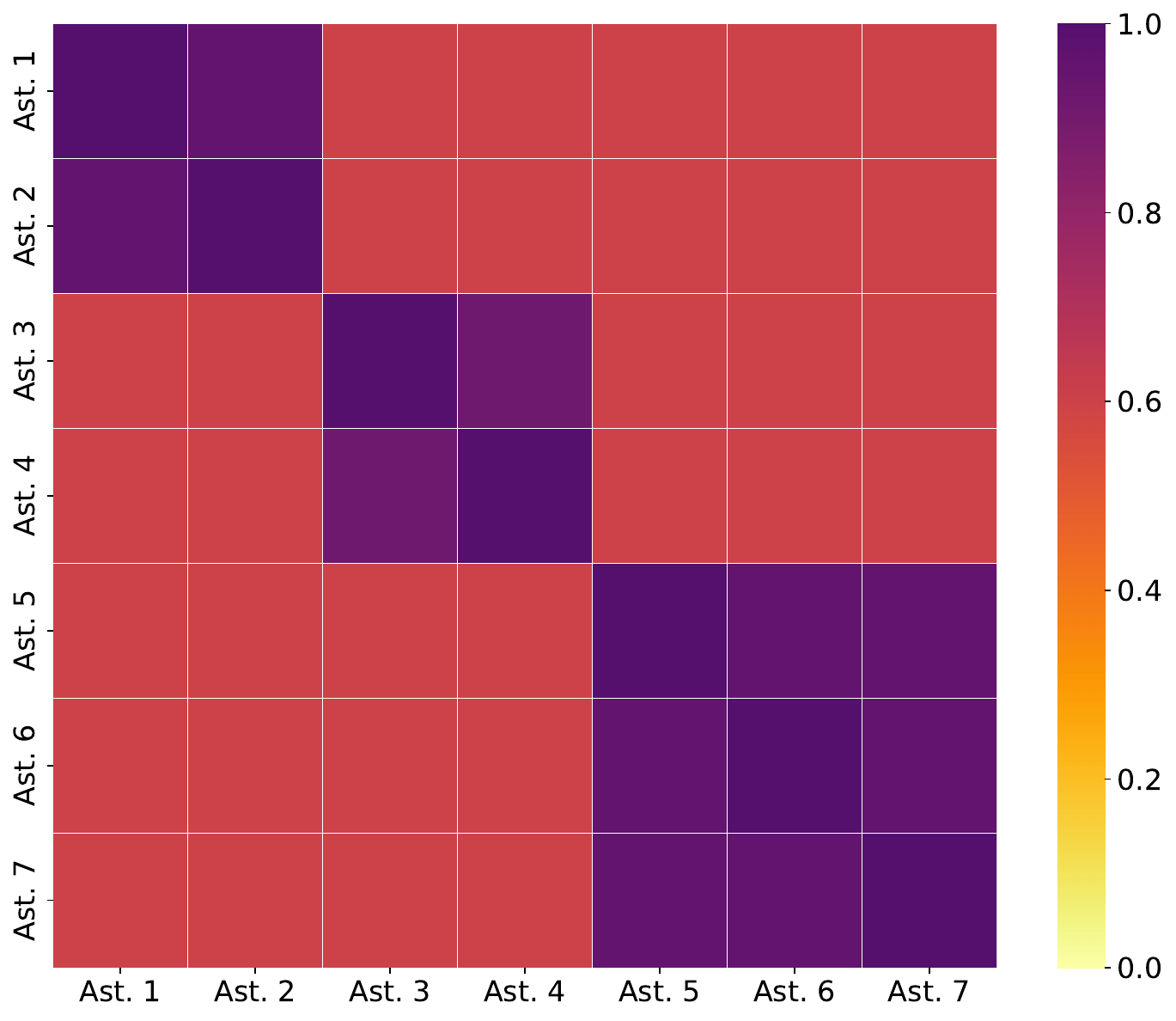}
        \caption{Three-block correlation matrix $\Omega$ with within-block correlations $\rho_1=\rho_3=0.9$, $\rho_2=0.8$ and cross-block correlation $\rho=0.6$.}
        \label{fig:cov_ex3}
\end{figure}

\begin{figure}[!t]
    \centering
    \begin{subfigure}[t]{0.48\textwidth}
        \centering
        \includegraphics[width=\textwidth]{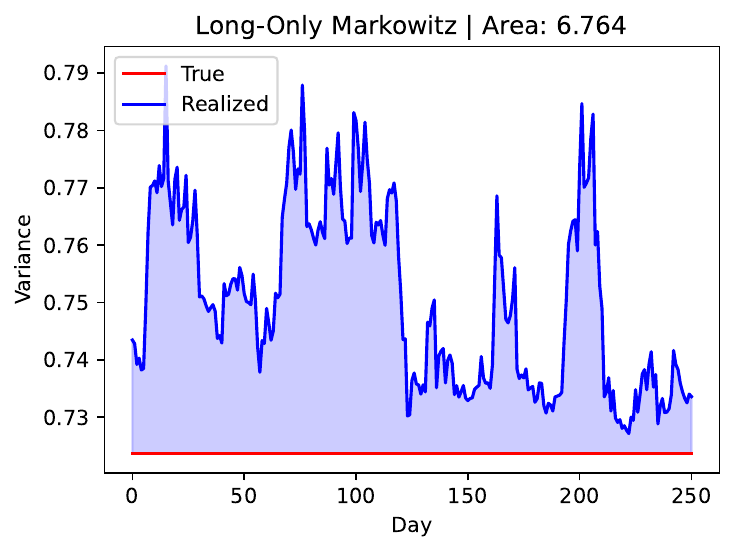}
        \caption{True and realized portfolio variances; the shaded region measures the performance loss due to estimation noise.}
        \label{fig:var_block_ex3}
    \end{subfigure}
    \hfill
    \begin{subfigure}[t]{0.48\textwidth}
        \centering
        \includegraphics[width=\textwidth]{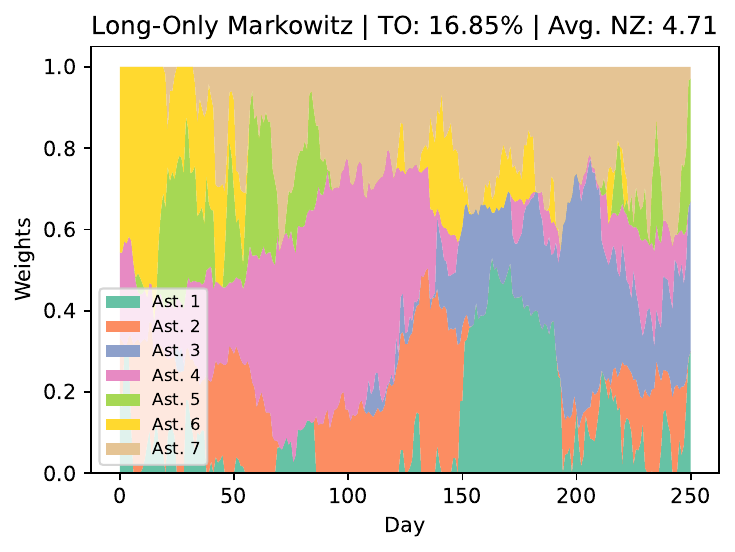}
        \caption{ Time series of portfolio weights, showing frequent reallocations across blocks (turnover 16.85\%, average 4.71 active positions).}
        \label{fig:por_block_ex3}
    \end{subfigure}

    \caption{Fragility long-only portfolio under the three-block structure.  }
    \label{fig:block_ex3}
\end{figure}

We also illustrate that block-induced fragility arises in real data. 
Consider a universe consisting of Visa, Mastercard, Shell, Chevron, Exxon Mobil, Coca Cola, and Pepsi, using monthly returns from January 2018 to January 2025. 
We residualize each return series with respect to the market factor, effectively working within a one-factor model and isolating the cross-sectional dependence structure. 
This step removes the dominant market-wide comovement and reveals the sectoral correlation patterns most relevant for portfolio allocation.
The residual correlation matrix, displayed in Figure~\ref{fig:cov_ex3r}, exhibits a clear approximate block structure: payment networks (Visa, Mastercard), energy firms (Shell, Chevron, Exxon Mobil), and beverage companies (Coca Cola, Pepsi) form visibly correlated groups once the market component is removed. While the matrix is not exactly block diagonal, the dominant organization is nevertheless hierarchical and clustered, with residual cross-group interactions and estimation noise perturbing the structure. 

Figure~\ref{fig:block_ex3r} illustrates the resulting instability.
Although the portfolio maintains, on average, approximately five to six active
positions, Visa, Pepsi, and, for the most part, Coca-Cola remain invested
across the sample period, while Mastercard and the three Energy names
repeatedly enter and exit the allocation, generating substantial turnover and
a persistent gap between true and realized variances.
Consistent with the fragility mechanism characterized by the block fixed-point system in Proposition~\ref{prop.K.block}, the example suggests that small perturbations around an approximately clustered dependence structure can induce significant changes in the active set. While the empirical correlation matrix is not exactly block diagonal, its dominant organization is sufficiently structured for the instability mechanisms identified by the analytical model to become visible in practice. 
This observation motivates the structured block-diagonal approximation developed later in the paper (see Section \ref{sec:block.approx}), which is designed to capture the dominant dependence patterns while attenuating noisy cross-group interactions, thereby enabling the instability mechanisms described in Proposition~\ref{prop.K.block} to characterize instances that arise from empirical data.

\begin{figure}[h]
    \centering
        \includegraphics[width=0.6\textwidth]{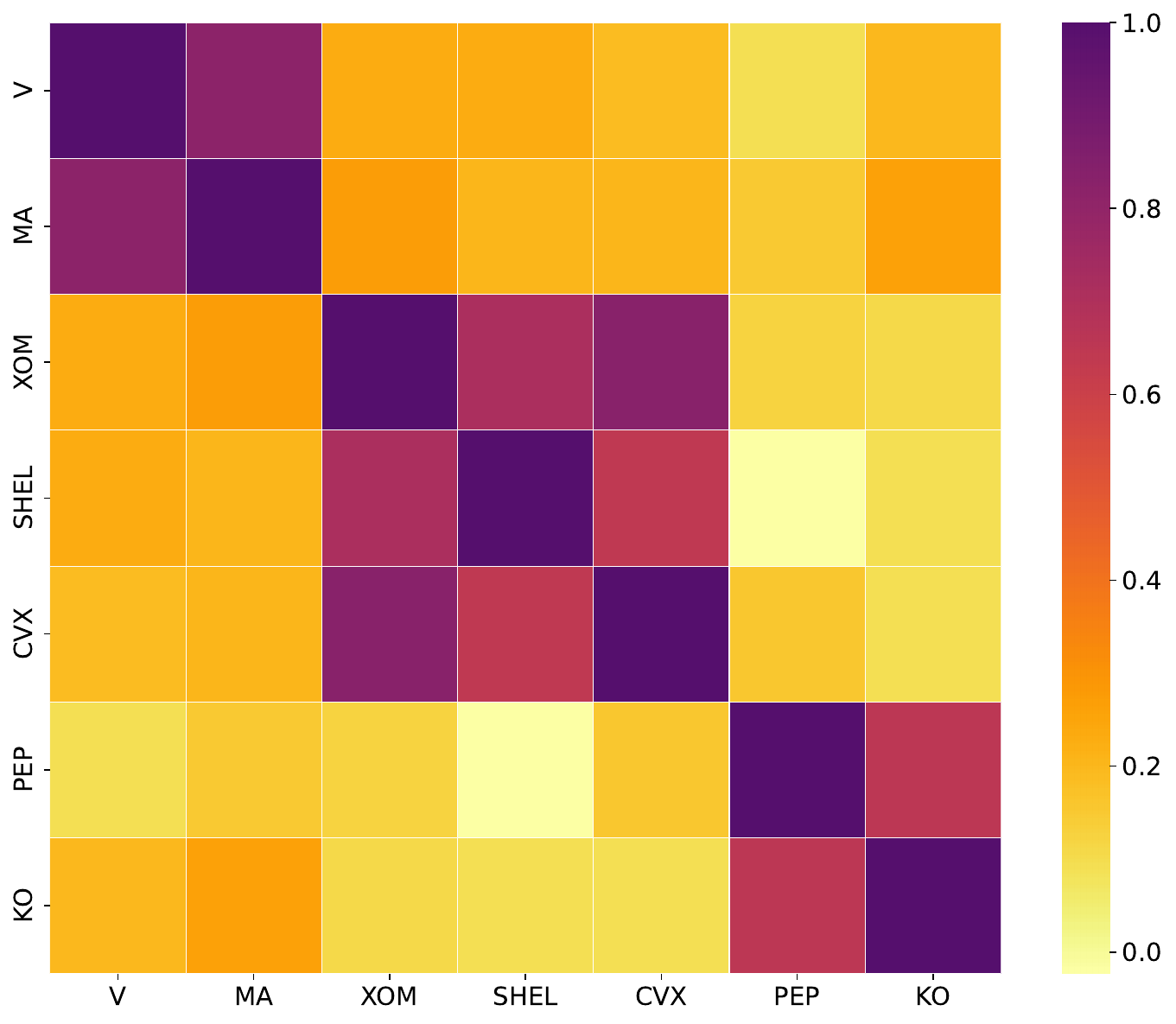}
        \caption{Residual correlation matrix $\Omega$ after removing the market factor from monthly returns (2018–2024). 
Sectoral clustering becomes visible once the common market component is eliminated.}
        \label{fig:cov_ex3r}
\end{figure}

\begin{figure}[!t]
    \centering
    \begin{subfigure}[t]{0.48\textwidth}
        \centering
        \includegraphics[width=\textwidth]{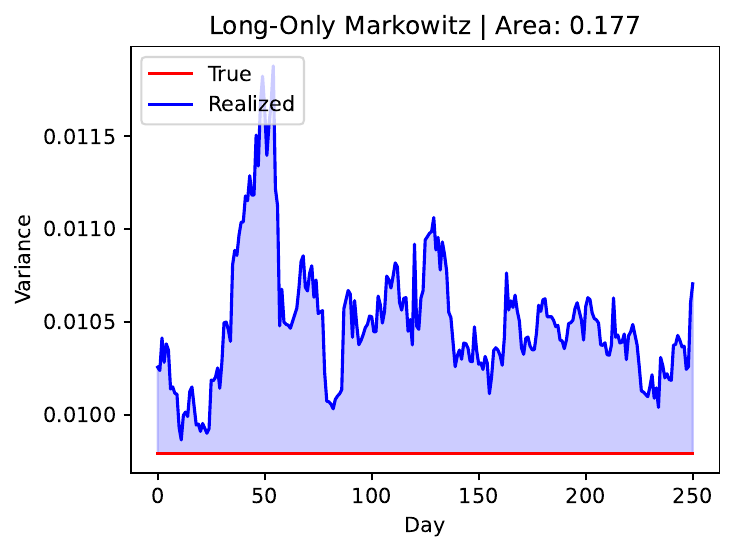}
        \caption{True and realized portfolio variances; the shaded region measures cumulative variance loss due to estimation noise. }
        \label{fig:var_block_ex3r}
    \end{subfigure}
    \hfill
    \begin{subfigure}[t]{0.48\textwidth}
        \centering
        \includegraphics[width=\textwidth]{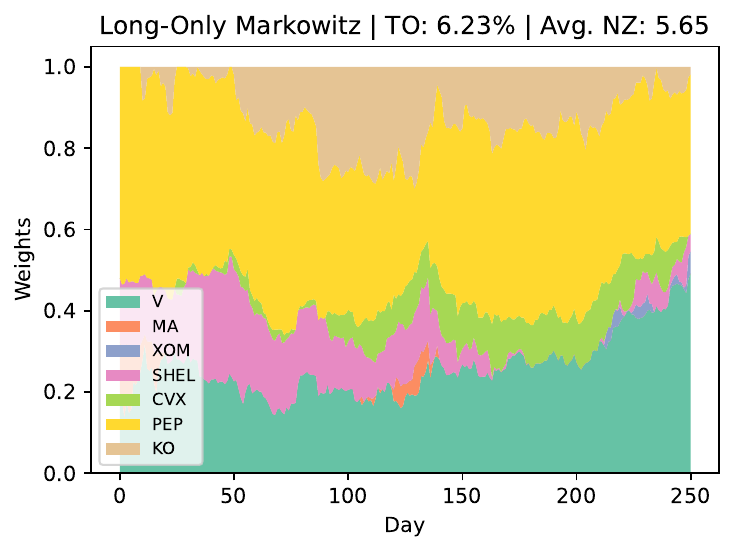}
        \caption{Portfolio weights over time, showing recurrent asset inclusion and exclusion (turnover 6.23\%, average 5.65 active positions).}
        \label{fig:por_block_ex3r}
    \end{subfigure}

    \caption{Long-only minimum-variance portfolio for the residualized real-data universe. }
    \label{fig:block_ex3r}
\end{figure}

\section{Proposed Robustification Approaches} \label{sec:rob_robustification}

The previous sections show that fragility in the long-only minimum-variance problem~\eqref{eq.minvar} arises when the fixed-point thresholds lie close to active-set breakpoints. 
Instability is therefore not driven by estimation error alone, but by the way the correlation structure couples assets and governs the truncation mechanism in the solution. 
When thresholds approach these critical values, even small perturbations in volatilities or correlations modify the truncation pattern induced by the positive-part operator, leading to discontinuous reallocations and performance degradation.

The robustification strategies developed below are guided by this structural
insight.  Rather than modifying the optimization problem itself, we adjust the
correlation structure in a manner that systematically shifts the fixed-point
thresholds away from critical regions.
Each approach can be interpreted as a structured shrinkage operation on the correlation matrix. 
By attenuating cross-asset coupling, these procedures reduce the sensitivity of the active set to small perturbations in volatilities or correlations.

For clarity, we proceed in increasing order of generality. 
We begin with the constant-correlation case, then extend the ideas to block-diagonal structures, and finally consider general correlation matrices via structured approximation.

\subsection{Constant Correlation Matrix}\label{sec.constant.correl}

We consider first the simple case when $\Omega$ is a constant correlation matrix of the form~\eqref{eq.single.block}.  The two shrinkage approaches that we describe below provide the building blocks for more general cases.  

\subsubsection{Correlation-Oblivious Shrinkage}

The simplest robustification approach ignores correlations altogether and sets $y_i = \tfrac{1}{1-\rho}\theta_i$ for all $i$, which corresponds to choosing $\bar\theta = 0$ in Proposition~\ref{prop.one.block}. Since the factor $\tfrac{1}{1-\rho}$ is common to all components, it cancels upon normalization, and substituting into~\eqref{eq.sol.x} yields
$$x_i = \frac{\theta_i^2}{\sum_{j=1}^n \theta_j^2} = \frac{1/\sigma_i^2}{\sum_{j=1}^n 1/\sigma_j^2},\qquad i=1,\dots,n,$$
that is, inverse-variance weights. This is equivalent to solving~\eqref{eq.minvar} with $\Omega$ replaced by the identity, i.e., $V=\Sigma^2$, and within the constant-correlation parametrization it corresponds to \emph{shrinking $\rho$ to zero}.
From the perspective of Proposition~\ref{prop.one.block}, setting $\bar\theta=0$ implies $(\theta_i - \bar\theta)^+ = \theta_i > 0$ for every asset, so the active set is the full index set ${1,\dots,n}$ regardless of the volatility configuration. As a consequence, the positive-part truncation in~\eqref{eq.sol.y} is eliminated and the support of the portfolio becomes invariant to perturbations in the parameters.

A direct implication of this construction is that the resulting portfolio is fully dense, assigning strictly positive weight to all assets in the same spirit as diversification-based approaches such as \citet{lopez2016building} and \citet{raffinot2018hierarchical}. 
While this guarantees stability, it may move the solution away from the true minimum-variance allocation in settings where highly correlated assets exhibit considerably different volatilities, since the procedure retains all assets rather than concentrating on the lower-volatility ones.

The remaining approaches can be interpreted as partial or structured shrinkage schemes. They preserve selected aspects of the correlation structure, allowing sparsity to emerge when supported by the data, while still attenuating the coupling that generates fragility.

\subsubsection{Correlation-Aware Shrinkage}

We now introduce a structured shrinkage scheme that preserves part of the correlation information while explicitly controlling proximity to active-set breakpoints.
Unlike the correlation-oblivious approach, which eliminates the truncation mechanism entirely, this procedure retains the positive-part structure of Proposition~\ref{prop.one.block} and therefore allows sparse portfolios that more closely reflect the true minimum-variance solution.

Specifically, we consider portfolios of the form
\[
y = \frac{1}{1-\rho}\left(\theta -\theta_\varepsilon \1 \right)^+,
\]
where the adjusted threshold $\theta_\varepsilon$ is obtained from the
fixed-point solution $\bar\theta$ through a robustness correction.

The correction ensures that $\theta_\varepsilon$ maintains a minimum distance $\varepsilon$ from all breakpoints $\theta_i$, stabilizing the active set and preventing small perturbations in volatilities or correlations from triggering abrupt reallocation.
Since the factor $\tfrac{1}{1-\rho}$ is common to all components, it cancels upon normalization and the resulting portfolio weights are
\[
x_i = \frac{(\theta_i - \theta_\varepsilon)^+\cdot\theta_i}{\displaystyle\sum_{j=1}^n (\theta_j - \theta_\varepsilon)^+\cdot\theta_j},
\qquad i=1,\dots,n.
\]

To correct the threshold $\theta_\varepsilon$, we introduce a robustness parameter $\varepsilon > 0$ that specifies the minimal distance a threshold should maintain from any breakpoint.
This parameter can be externally calibrated or, in the absence of prior information, a reasonable choice is
\begin{equation} \label{eq:epsilon.heuristic}
\varepsilon := \frac{\max_i \theta_i}{n}.
\end{equation}
Given the  sequence of breakpoints ordered as \eqref{eq.order},
we define the robust-feasible set $\mathcal{F}$ as
\[
\mathcal{F}
= (-\infty,\, \theta_n - \varepsilon]
\;\cup\;
\bigcup_{\substack{i=1 \\ \theta_i - \theta_{i+1} \,\geq\, 2\varepsilon}}^{n-1}
[\theta_{i+1} + \varepsilon,\, \theta_i - \varepsilon].
\]

The final adjusted threshold $\theta_\varepsilon$ is obtained by projecting the initial fixed-point solution $\bar{\theta}$ onto $\mathcal{F}$:
\[
\theta_\varepsilon=\operatorname{proj}_{\mathcal{F}}(\bar{\theta})=\arg \min _{x \in \mathcal{F}}|x-\bar{\theta}|.
\]
This procedure ensures that $\theta_\varepsilon$ lies at least $\varepsilon$ away from all breakpoints, stabilizing the active set while preserving sparsity.

The robustification procedure admits a natural interpretation in terms of an
equivalent correlation parameter.
Given the adjusted threshold $\theta_\varepsilon$, define $\rho_\varepsilon$ as the unique solution to the fixed-point equation of
Proposition~\ref{prop.one.block} evaluated at $\theta_\varepsilon$:
\[
\theta_\varepsilon
= \frac{\rho_\varepsilon}{1-\rho_\varepsilon}
  \sum_{j: \theta_j > \theta_\varepsilon}(\theta_j - \theta_\varepsilon).
\]
It is easy to see that $\theta_\varepsilon$ is the solution to the fixed-point problem~\eqref{eq.fixed.point} with correlation $\rho_\varepsilon$ in
place of $\rho$. 
Consequently, the resulting portfolio is identical to the exact solution of
Proposition~\ref{prop.one.block} with correlation $\rho_\varepsilon$ in
place of $\rho$.

The above robustification has two equivalent interpretations.
It can be interpreted as a \emph{threshold shrinkage} from $\bar \theta$ to $\theta_\varepsilon$.  It can also be interpreted as a \emph{correlation shrinkage} from $\rho$ to  $\rho_\varepsilon$.
The truncation operator therefore remains operative: assets whose inverse
volatilities fall below the adjusted threshold are excluded, allowing the portfolio
to remain sparse when supported by the data and preserving economically meaningful
selection effects while mitigating threshold-induced instability.

\paragraph{Illustrative Example}

We illustrate the behavior of the correlation-oblivious and correlation-aware robustification schemes in two single-block eight-asset universes. In both cases, the robust thresholds are constructed using the heuristic $\varepsilon$ selection described in \eqref{eq:epsilon.heuristic}. In the first scenario, all assets have identical volatilities ($\sigma = \1$) and a constant correlation of $\rho = 0.90$; here both approaches effectively stabilize the portfolio, maintaining full diversification with minimal turnover, in contrast to the highly unstable Markowitz solution. The corresponding performance and allocations are shown in Figures~\ref{fig:var_8x8_one_c} and ~\ref{fig:por_8x8_one_c}, where the top panel reports the variance over time and the bottom panel shows the evolution of the portfolio weights.

In the second scenario, the asset volatilities are heterogeneous ($\sigma = (1,1,1,1,2,2,2,2)$) with the same correlation structure. The correlation-oblivious method maintains full allocations, and fails to prioritize the lower-volatility assets, producing suboptimal performance. By contrast, the correlation-aware approach adjusts the threshold using the $\varepsilon$-based correction, selectively investing in the lower-volatility assets, which improves realized variance, reduces turnover, and naturally generates a sparse portfolio. The corresponding performance and weight dynamics are reported in Figures~\ref{fig:var_8x8_onetwo_c} and ~\ref{fig:por_8x8_onetwo_c}, again with variance on top and portfolio allocations below.

\begin{figure}[htbp]
\centering

\begin{subfigure}{\textwidth}
    \centering
    \includegraphics[width=\textwidth]{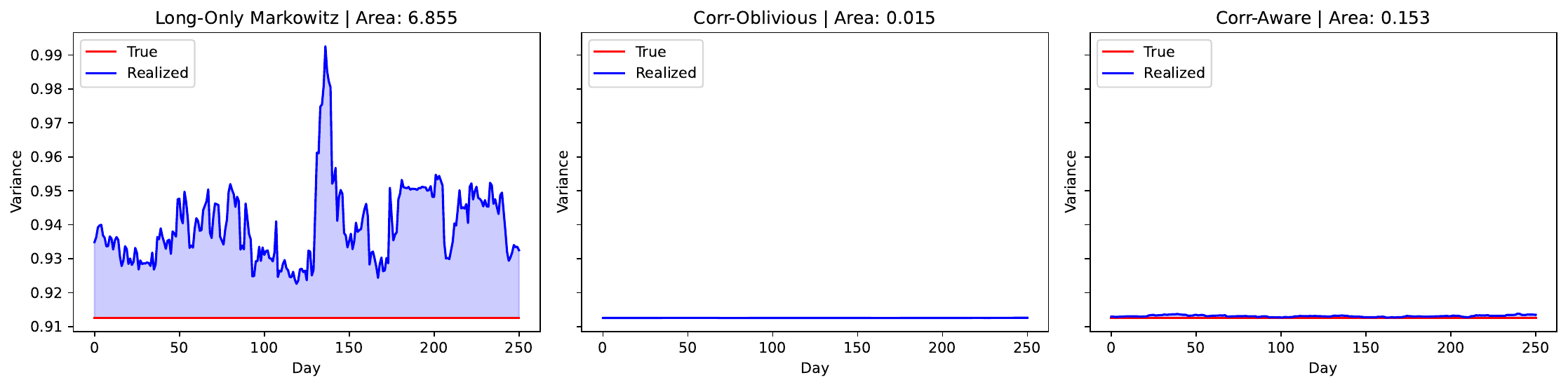}
    \caption{Variance (uniform volatilities).}
    \label{fig:var_8x8_one_c}
\end{subfigure}

\vspace{0.3cm}

\begin{subfigure}{\textwidth}
    \centering
    \includegraphics[width=\textwidth]{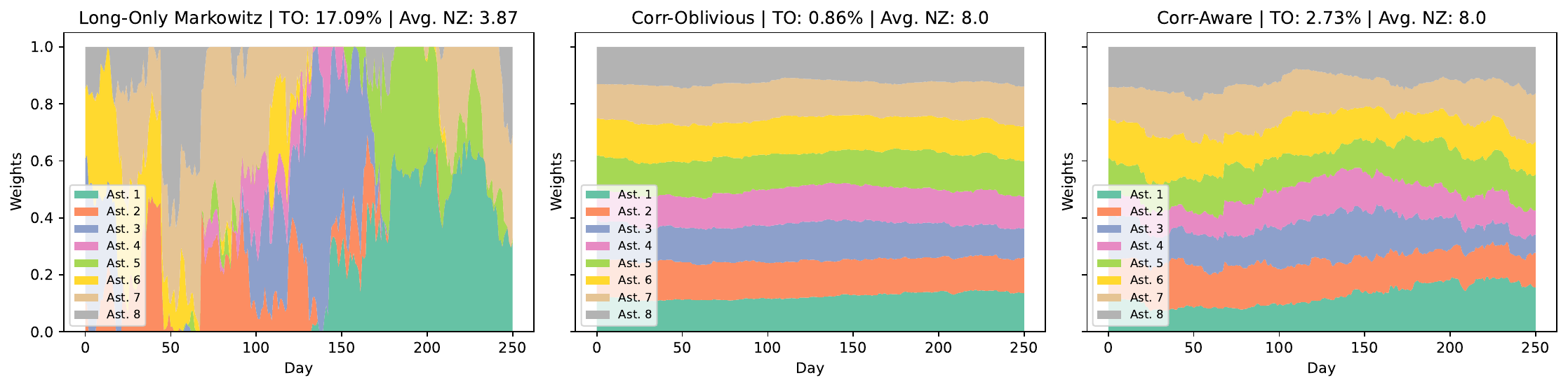}
    \caption{Weights (uniform volatilities).}
    \label{fig:por_8x8_one_c}
\end{subfigure}

\vspace{0.4cm}

\begin{subfigure}{\textwidth}
    \centering
    \includegraphics[width=\textwidth]{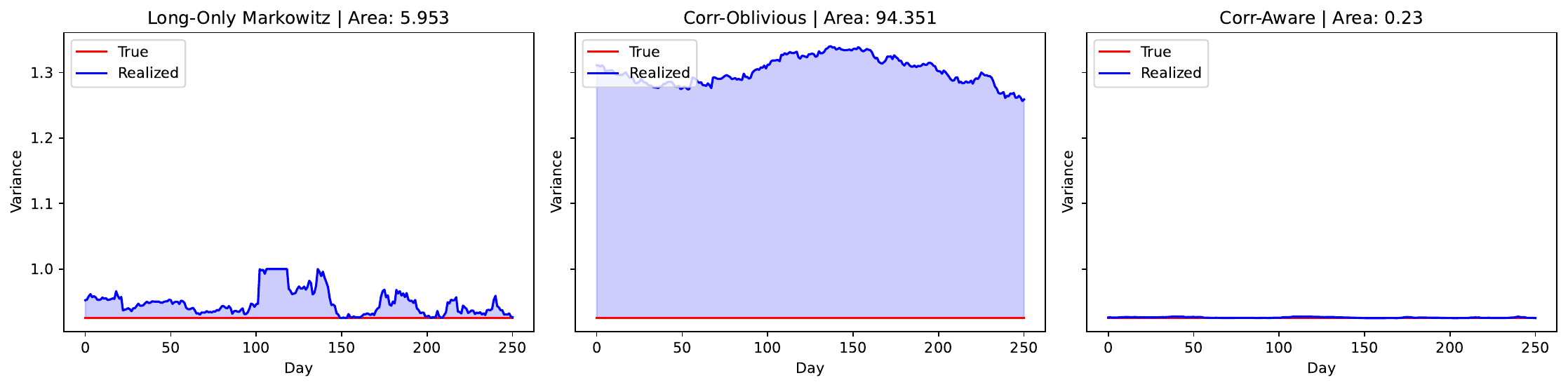}
    \caption{Variance (heterogeneous volatilities).}
    \label{fig:var_8x8_onetwo_c}
\end{subfigure}

\vspace{0.3cm}

\begin{subfigure}{\textwidth}
    \centering
    \includegraphics[width=\textwidth]{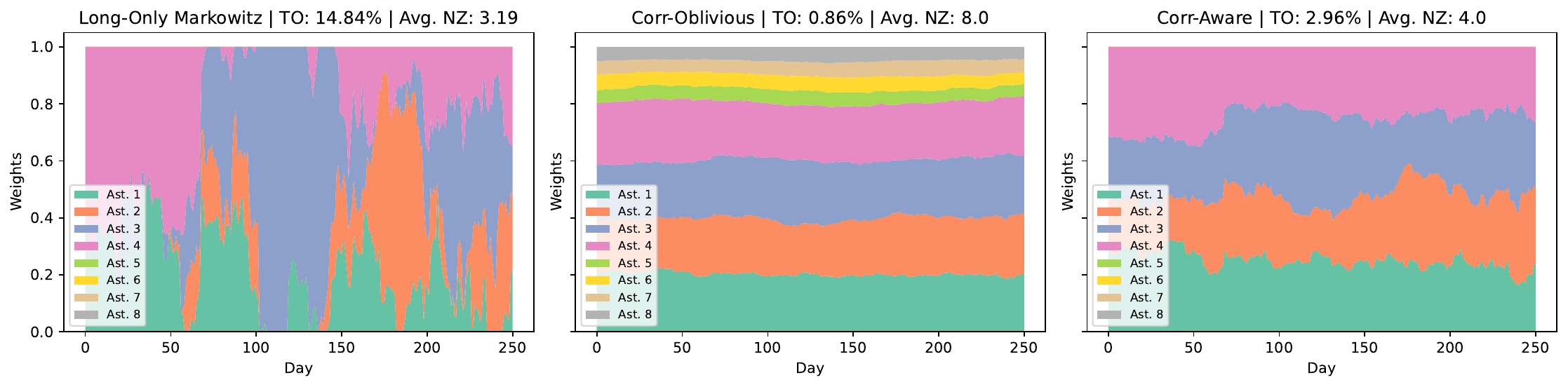}
    \caption{Weights (heterogeneous volatilities).}
    \label{fig:por_8x8_onetwo_c}
\end{subfigure}

\caption{Eight-asset universe under two volatility configurations. 
Top two panels: uniform volatilities $\sigma =\1$ with $\rho = 0.90$. 
Bottom two panels: heterogeneous volatilities $\sigma = (1,1,1,1,2,2,2,2)$ with $\rho = 0.90$. 
The Markowitz portfolio exhibits substantial turnover and instability in the active set. 
Robustification stabilizes allocations and reduces turnover. 
When volatilities differ, the correlation-aware approach preserves sparsity by allocating primarily to lower-volatility assets, leading to improved realized variance.}
\label{fig:8x8_combined}
\end{figure}

In both cases, the Markowitz portfolio (left column) exhibits pronounced fragility, with frequent changes in the active set leading to significant turnover and a persistent gap from the true optimum, as reflected in the large variance discrepancies. By contrast, the proposed correlation-aware shrinkage (right column) produces portfolios that closely track the true minimum-variance solution. In the uniform-volatility setting, it maintains full diversification across all assets, while in the heterogeneous case it selectively excludes the higher-volatility assets, yielding sparse and stable allocations with consistently low turnover.

The main distinction arises with the correlation-oblivious shrinkage approach (middle-column). Although it achieves the lowest turnover across both scenarios, it systematically produces fully dense portfolios. This behavior is beneficial when volatilities are homogeneous, but in the presence of heterogeneity it assigns positive weights to all assets (including those that are highly volatile and strongly correlated) resulting in portfolios that deviate substantially from the true minimum-variance allocation.

\subsection{Block-Diagonal Correlation Matrix}\label{sec.block.correl}

We now extend the previous shrinkage constructions to the case where $\Omega$ exhibits a block-diagonal structure of the form~\eqref{eq.block.structure}. 
This setting introduces a hierarchical dependence pattern: stronger within-block correlations and weaker cross-block coupling governed by $\rho$. 
The key idea is to exploit this structure to perform \emph{structured shrinkage}, attenuating instability both within and across groups while preserving economically meaningful interactions.
We propose a two-tier robustification scheme that decomposes the problem along the block structure and applies shrinkage at each level.

\paragraph{Tier 1 (within-block shrinkage).}
For each block $i=1,\dots,K$, let $\Sigma^{(i)} = \Diag(\sigma^{(i)})$ and
$V^{(i)} = \Sigma^{(i)}\,\Omega_i\,\Sigma^{(i)}$ be the within-block covariance
matrix. We solve the subproblem defined by $V^{(i)}$ and construct a robust
portfolio $x^{(i)}$ using either the correlation-oblivious or correlation-aware
shrinkage approaches from Section~\ref{sec.constant.correl}.
This step stabilizes the active set within each group by attenuating the effect
of within-block correlations.

\paragraph{Tier 2 (cross-block shrinkage).}
We then aggregate the block-level portfolios into a reduced $K$-dimensional
problem that captures cross-block interactions.
Specifically, we define the aggregated volatilities as
\begin{equation}
    (\sigma^{\text{agg}}_i)^2 := (x^{(i)})^\top V^{(i)}\, x^{(i)},
\end{equation}
and construct the aggregated covariance matrix
\begin{equation}
\Omega^{\text{agg}} = (1-\rho)I + \rho\,\1\1\transp,
\qquad
V^{\text{agg}} = \Sigma^{\text{agg}}\,\Omega^{\text{agg}}\,\Sigma^{\text{agg}},
\end{equation}
where $\Sigma^{\text{agg}}=\Diag(\sigma^{\text{agg}})$.
A robust portfolio $x^{\text{agg}}\in \R^K$ is then computed for this reduced problem
by applying the same shrinkage approach used in Tier~1, either the
correlation-oblivious or correlation-aware procedure from the
constant-correlation case.

\paragraph{Reconstruction.}
The final portfolio is obtained by combining the block-level and aggregate allocations:
\begin{equation}
x =
\begin{pmatrix}
x^{\text{agg}}_1 \cdot x_1 \\
\vdots \\
x^{\text{agg}}_K \cdot x_K
\end{pmatrix},
\end{equation}
where $x^{\text{agg}}_i \in \mathbb{R}$ is the scalar aggregate weight assigned
to block $i$ and $x^{(i)} \in \mathbb{R}^{n_i}$ is the within-block portfolio vector.
This construction admits a natural interpretation as \emph{multi-level shrinkage}. 
At the first level, correlations within each block are shrunk (either completely or through threshold adjustment) to stabilize the local active sets. 
At the second level, the cross-block correlation $\rho$ is effectively shrunk through the aggregated problem, reducing the coupling between groups.

In contrast to traditional global shrinkage approaches, this decomposition preserves heterogeneity across groups and allows sparsity to emerge both within and across blocks. 
Assets can be excluded at the block level through the within-block truncation mechanism, while entire groups can be down-weighted or excluded through the aggregated allocation. 
As a result, the procedure balances robustness and fidelity to the original covariance structure, yielding portfolios that remain stable under perturbations while retaining the ability to concentrate on the most attractive assets.

\subsection{Block-Diagonal Approximation}\label{sec:block.approx}

The robustification procedures developed in Section~\ref{sec.block.correl}
extend naturally to general correlation matrices through structured approximation.
Given an estimated correlation matrix $\Omega^{\mathrm{data}}$, we approximate it
by a matrix of the form~\eqref{eq.block.structure}, and then apply the shrinkage
methods derived for structured block correlations.
This can be interpreted as a data-driven shrinkage of the dependence structure:
strong local correlations are preserved within groups of similar assets, while
weak and noisy cross-group dependencies are compressed into a common background
correlation.

\subsubsection{Clustering Procedure}
To construct the block structure, assets are first grouped according to their
empirical correlations using hierarchical agglomerative clustering.
Following standard correlation-network constructions
\cite{lopez2016building, raffinot2018hierarchical, raffinot2018hcaa, mantegna1999hierarchical}, correlations are mapped into
a Euclidean-consistent distance via
\begin{equation}
d_{ij}=\sqrt{2(1-\Omega_{ij})}, \qquad i,j\in\{1,\dots,n\}.
\end{equation}
This transformation embeds the correlation matrix into a metric space in which
nearby assets exhibit similar dependence patterns.

We employ single-linkage clustering, where the distance between two clusters $C_1,C_2$ is defined as
\begin{equation}
d(C_1,C_2)=\min\{d_{ij}:i\in C_1,\;j\in C_2\}.
\end{equation}
Starting from singleton clusters, the algorithm recursively merges the pair of clusters with minimum inter-cluster distance. Single linkage is particularly suitable in financial applications, where dependence is largely factor-driven and propagates through chains of partial correlations rather than forming well-separated, compact groups \cite{raffinot2018hierarchical}. Consequently, it captures connectivity structures by merging clusters whenever a strong pairwise relationship exists, effectively tracing paths of shared dependence across assets \cite{lopez2016building}. 

Alternative linkage criteria, including complete, average, weighted, centroid, median, and Ward linkage, have been extensively studied in the hierarchical portfolio literature and can produce different cluster structures and portfolio allocations \cite{lopez2016building, raffinot2018hierarchical, raffinot2018hcaa}. In this paper, however, we restrict attention to single linkage for simplicity and to maintain a direct connection between the recovered dependence chains and the block structures underlying our theoretical analysis. Investigating how alternative linkage criteria interact with the proposed methodology, and whether they yield improved cluster recovery or portfolio performance, is an interesting direction for future research.

\subsubsection{Selection of the Clustering Threshold}
A key step is determining the level at which the dendrogram is cut to produce
the final partition.
Rather than fixing the number of clusters a priori, we adopt a data-driven rule
based on the sequence of linkage distances
\[
\ell_1 \le \ell_2 \le \cdots \le \ell_{n-1},
\]
where $\ell_k$ denotes the distance at which the $k$-th merge occurs.
Define the successive gaps
\begin{equation}
g_k=\ell_{k+1}-\ell_k, \qquad k=1,\dots,n-2,
\end{equation}
and let
\[
k^*=\arg\max_k g_k
\]
identify the largest discontinuity in the merging process. We then select the
cutoff
\begin{equation}
\tau=\frac{1}{2}(\ell_{k^*}+\ell_{k^*+1}).
\end{equation}
This rule selects the partition at the scale of maximal structural separation,
corresponding to the largest jump in clustering distance. Intuitively, it avoids
cutting the dendrogram in ambiguous regions where clusters merge gradually, and
instead identifies the point at which distinct dependence groups begin to merge.

Figure~\ref{fig:market_corr_matrices} revisits the market residual correlation matrix from Section \ref{sec:K.block.examples}  from a clustering perspective. When the assets are listed alphabetically, the block structure motivating our analysis is largely hidden. After applying single-linkage hierarchical clustering, however, the matrix reorganizes into three clear diagonal blocks corresponding to economically meaningful sectors. Thus, the block structure assumed in our theoretical development need not be specified exogenously; it can be recovered directly from market data. The corresponding dendrogram in Figure~\ref{fig:market_dendrogram} exhibits a pronounced gap between the fourth and fifth merge distances ($g_4=0.373$), roughly five times larger than any other successive gap. Using the largest-gap rule places the threshold at $\tau \approx 1.017$, cleanly separating the three sector-level clusters from their eventual cross-sector merger. This thresholding rule simultaneously identifies the cluster memberships and determines the number of clusters in a fully data-driven manner, yielding a robust partition that is directly tied to the observed correlation structure.

\begin{figure}[htbp]
    \centering
    \includegraphics[width=\textwidth]{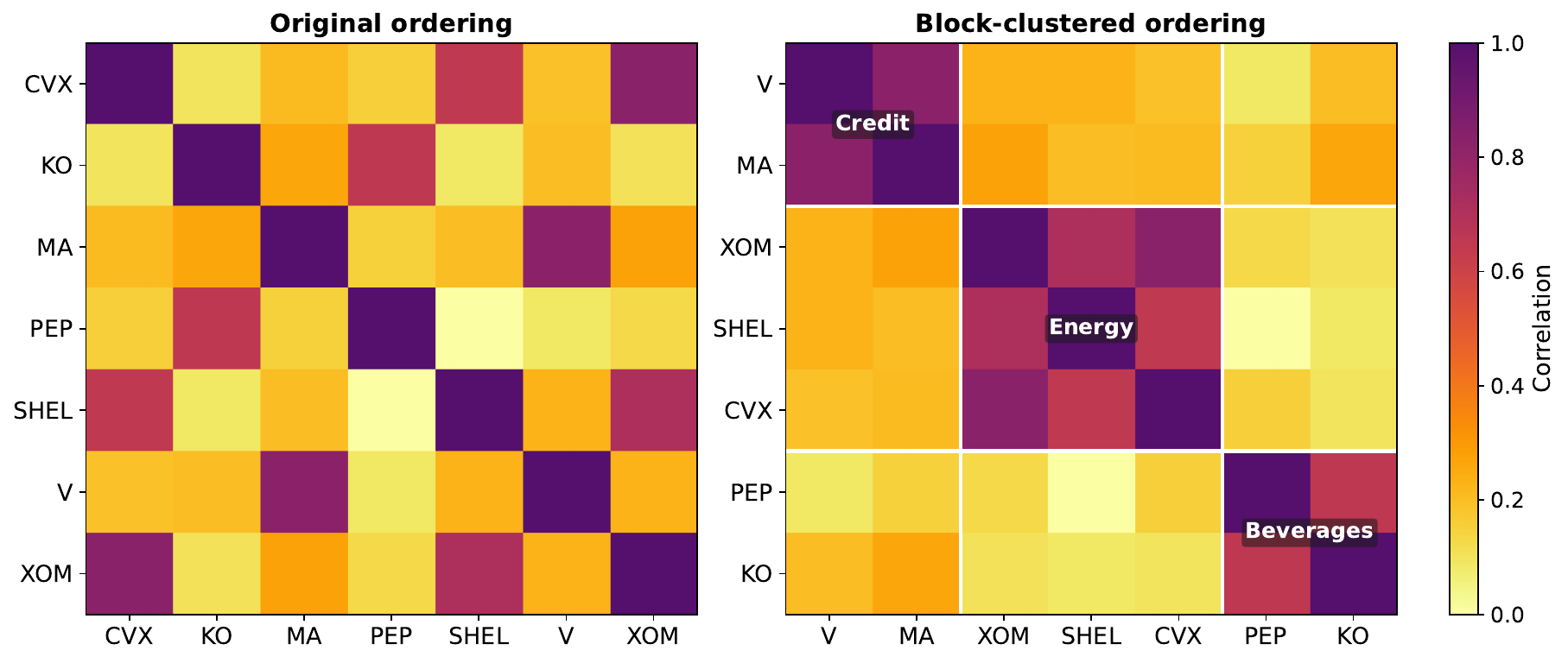}
    \caption{Residual correlation matrix $\Omega$ of the seven-ticker market example of
Section \ref{sec:K.block.examples} (monthly returns, 2018-2025, market factor removed).
We show the assets in alphabetical order; no block structure is visible along with the same matrix reordered by the single-linkage partition.}
    \label{fig:market_corr_matrices}
\end{figure}

\begin{figure}[htbp]
    \centering
    \includegraphics[width=\textwidth]{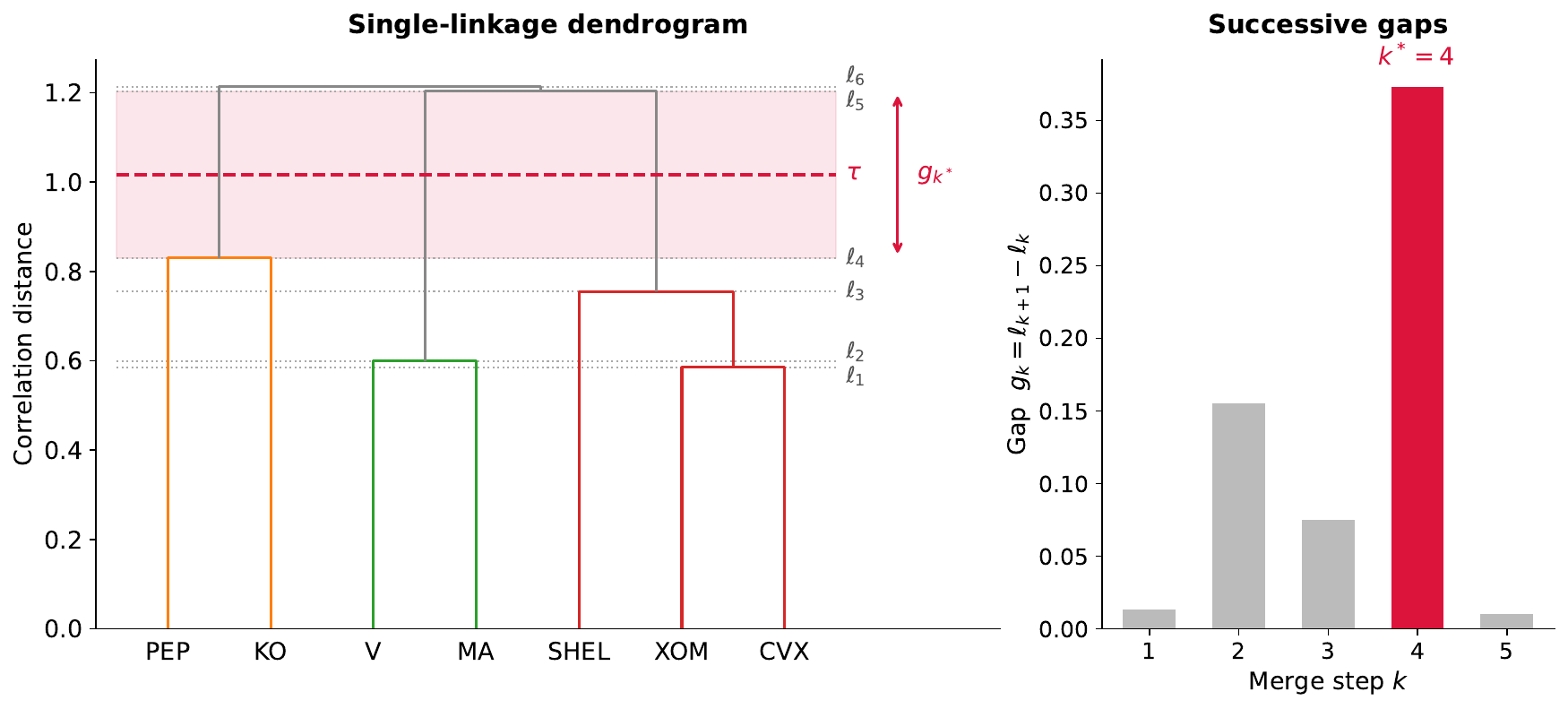}
    \caption{Largest-gap cut applied to the single-linkage dendrogram of
$\Omega$ from the market example in  Section \ref{sec:K.block.examples}.
{Left:} merge distances $\ell_1 \le \cdots \le \ell_6$ (dotted lines);
the shaded band spans the dominant gap $[\ell_{k^*},\ell_{k^*+1}]$ and the
dashed line marks the threshold $\tau = \tfrac{1}{2}(\ell_{k^*}+\ell_{k^*+1})
+ \varepsilon$.
{Right:} successive gaps $g_k = \ell_{k+1}-\ell_k$; the unique large
discontinuity at $k^* = 4$ provides the data-driven basis for the
three-block partition.}
    \label{fig:market_dendrogram}
\end{figure}

\subsubsection{Parameter Estimation}
Let $\{C_1,\dots,C_K\}$ denote the partition obtained from the clustering
procedure, with block sizes $n_1,\dots,n_K$ after reordering indices if
necessary.
Conditional on this partition, we estimate the parameters
$\rho,\rho_1,\dots,\rho_K$ in~\eqref{eq.block.structure} by least-squares
projection of $\Omega^{\mathrm{data}}$ onto the structured family.
The following result shows that the optimal parameters are obtained by averaging
empirical correlations separately over cross-block and within-block entries; the
proof is deferred to Appendix~\ref{app:proof.prop.block.fit}.

\begin{restatable}{proposition}{propositionblockfit}\label{prop:block.fit}
Let $\Omega^{\mathrm{data}} \in \mathbb{R}^{n\times n}$ be an estimated
correlation matrix and fix a partition $\{C_1,\dots,C_K\}$ with block sizes
$n_1,\dots,n_K$. Define the cross-block index set
\[
S_C:=\{(p,q): p\in C_i,\; q\in C_j,\; i\neq j\}.
\]
Then the minimizer of
\[
\min_{\rho,\rho_1,\dots,\rho_K}
\|\Omega^{\mathrm{data}}-\Omega\|_F^2,
\]
where $\Omega$ is constrained to satisfy~\eqref{eq.block.structure},
is given by
\begin{equation}\label{eq.rho.fit}
\rho^*
=
\frac{1}{|S_C|}
\sum_{(p,q)\in S_C}\Omega^{\mathrm{data}}_{pq},
\end{equation}
and
\begin{equation}\label{eq.rhoi.fit}
\rho_i^*
=
\frac{
\frac{1}{n_i(n_i-1)}
\sum_{\substack{j,k\in C_i\\j\neq k}}
\Omega^{\mathrm{data}}_{jk}
-\rho^*
}{1-\rho^*},
\qquad i=1,\dots,K.
\end{equation}
\end{restatable}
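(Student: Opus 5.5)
The plan is to write the entries of a matrix of the form~\eqref{eq.block.structure} explicitly and observe that the least-squares problem decouples. For $p\neq q$ in the same block $C_i$ we have $\Omega_{pq} = (1-\rho)\rho_i + \rho$. For $p,q$ in different blocks, $\Omega_{pq}=\rho$. The diagonal entries equal $(1-\rho)+\rho = 1$, which matches $\Omega^{\mathrm{data}}_{pp}=1$, so they contribute nothing to the objective. Hence
\[
\|\Omega^{\mathrm{data}}-\Omega\|_F^2 = \sum_{(p,q)\in S_C} (\Omega^{\mathrm{data}}_{pq}-\rho)^2 + \sum_{i=1}^K \sum_{\substack{j,k\in C_i\\ j\neq k}} \bigl(\Omega^{\mathrm{data}}_{jk} - c_i\bigr)^2,
\qquad c_i := \rho + (1-\rho)\rho_i .
\]

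The key step is to reparametrize from $(\rho,\rho_1,\dots,\rho_K)$ to $(\rho,c_1,\dots,c_K)$. For fixed $\rho<1$, the map $\rho_i\mapsto c_i$ is an affine bijection of $\mathbb{R}$, with inverse $\rho_i = (c_i-\rho)/(1-\rho)$. In the new variables the objective is a sum of $K+1$ separate one-dimensional quadratics: one in $\rho$ alone and one in each $c_i$ alone.

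Each of these quadratics is minimized at the mean of its data. This gives $\rho^*$ equal to the average of the cross-block entries, which is~\eqref{eq.rho.fit}. It also gives $c_i^*$ equal to the average of the $n_i(n_i-1)$ off-diagonal within-block entries of $C_i$. Mapping back through $\rho_i^* = (c_i^*-\rho^*)/(1-\rho^*)$ yields~\eqref{eq.rhoi.fit}. The bijection preserves minimizers, and the joint problem in $(\rho,c)$ is separable, so the unconstrained minimizer in the original variables is exactly this point.

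The main obstacle is the admissibility constraints attached to~\eqref{eq.block.structure}, namely $\rho<1$, $\rho_i\in(-\frac{1}{n_i-1},1)$ and the lower bound on $\rho$. I would treat the statement as the least-squares projection onto the parametric family, ignoring the positive-definiteness restrictions, and note this interpretation explicitly. The division by $1-\rho^*$ needs $\rho^*\neq 1$, which holds whenever the cross-block correlations are not all equal to one. I would also handle the degenerate cases: if $|S_C|=0$ (that is, $K=1$), then $\rho$ is not identifiable, and if $n_i=1$, then $\rho_i$ does not enter the objective. Both cases can be remarked on separately.
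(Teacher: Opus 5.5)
Your proposal is correct and follows essentially the same route as the paper. The paper also drops the diagonal, splits the loss into a cross-block term in $\rho$ and within-block terms in $v_i=(1-\rho)\rho_i+\rho$, takes means, and recovers $\rho_i^*=(v_i^*-\rho^*)/(1-\rho^*)$; your explicit justification that the change of variables is a bijection, and your remarks on $\rho^*=1$, $K=1$ and $n_i=1$, add care that the paper's proof leaves implicit.
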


Proposition~\ref{prop:block.fit} shows that the fitted structured matrix is
obtained by averaging empirical correlations at two scales: a global
cross-block level captured by $\rho^*$, and local within-block adjustments
captured by $\rho_i^*$.

\begin{remark}
Proposition~\ref{prop:block.fit} characterizes the unconstrained Frobenius
projection onto the structured family~\eqref{eq.block.structure}. If the
recovered parameters violate the positive-definiteness conditions of
Section~\ref{sec:structured_correlation}, one may instead project onto the
corresponding feasible parameter set.
\end{remark}

The resulting approximation provides a structured representation of any
correlation matrix, regardless of whether an exact block-diagonal structure is
present in the data.
As an illustration, Figure~\ref{fig:block_approx_example} shows the residual
correlation matrix $\hat\Omega$ of the market example introduced in
Section~\ref{sec:K.block.examples} (Figure~\ref{fig:cov_ex3r})
together with its block-structured approximation obtained using the proposed
clustering and parameter estimation procedure.
The approximation yields $\hat\rho = 0.162$ and, via the Proposition \ref{prop.K.block}, we obtain 
$\hat\theta = 2.389$, with per-block excess thresholds
$\bar\theta = (3.435,\,1.438,\,3.479)$ for the Credit, Energy, and Beverages
blocks respectively, giving fixed-point thresholds
\[
  \hat\theta\1+ \bar\theta \;=\; (5.824,\;3.827,\;5.868).
\]
Comparing these thresholds against the asset-level inverse volatilities $\theta_j$
directly confirms that the instability documented in Figure~\ref{fig:block_ex3r}
is a structural consequence of the block fixed-point mechanism of
Proposition~\ref{prop.K.block}.
Within the Credit block, Mastercard satisfies $\theta_{\mathrm{MA}} = 5.88$,
placing it just $1\%$ above its breakpoint $\hat\theta + \bar\theta_{\mathrm{Credit}} = 5.82$;
within Energy, Exxon falls at $\theta_{\mathrm{XOM}} = 3.78$, marginally
\emph{below} the threshold $\hat\theta + \bar\theta_{\mathrm{Energy}} = 3.83$,
formally excluding it from the active set of the approximated problem, while
Shell and Chevron clear the same threshold by only $9\%$.
By contrast, Visa and Pepsi, with $\theta_{\mathrm{V}} = 6.70$ and
$\theta_{\mathrm{PEP}} = 7.42$, lie well above their respective breakpoints;
Coca-Cola, at $\theta_{\mathrm{KO}} = 6.74$, also clears the Beverages threshold
of $5.87$ by a comfortable margin.
The block fixed-point system therefore makes a sharp prediction: assets operating
at or near their block thresholds (Mastercard and the Energy names) should
be the most susceptible to toggling as the estimated covariance fluctuates across rolling windows, while assets with coordinates safely above their breakpoints should remain persistently active.
Figure~\ref{fig:block_ex3r} confirms this prediction closely: Visa, Pepsi, and for the most part Coca-Cola remain active throughout, whereas Mastercard and the three Energy names generate the bulk of the observed turnover, establishing that the portfolio instability is not a statistical artefact but a direct manifestation
of the fragility regime characterized analytically in Proposition~\ref{prop.K.block}

\begin{figure}[htbp]
    \centering
    \includegraphics[width=\textwidth]{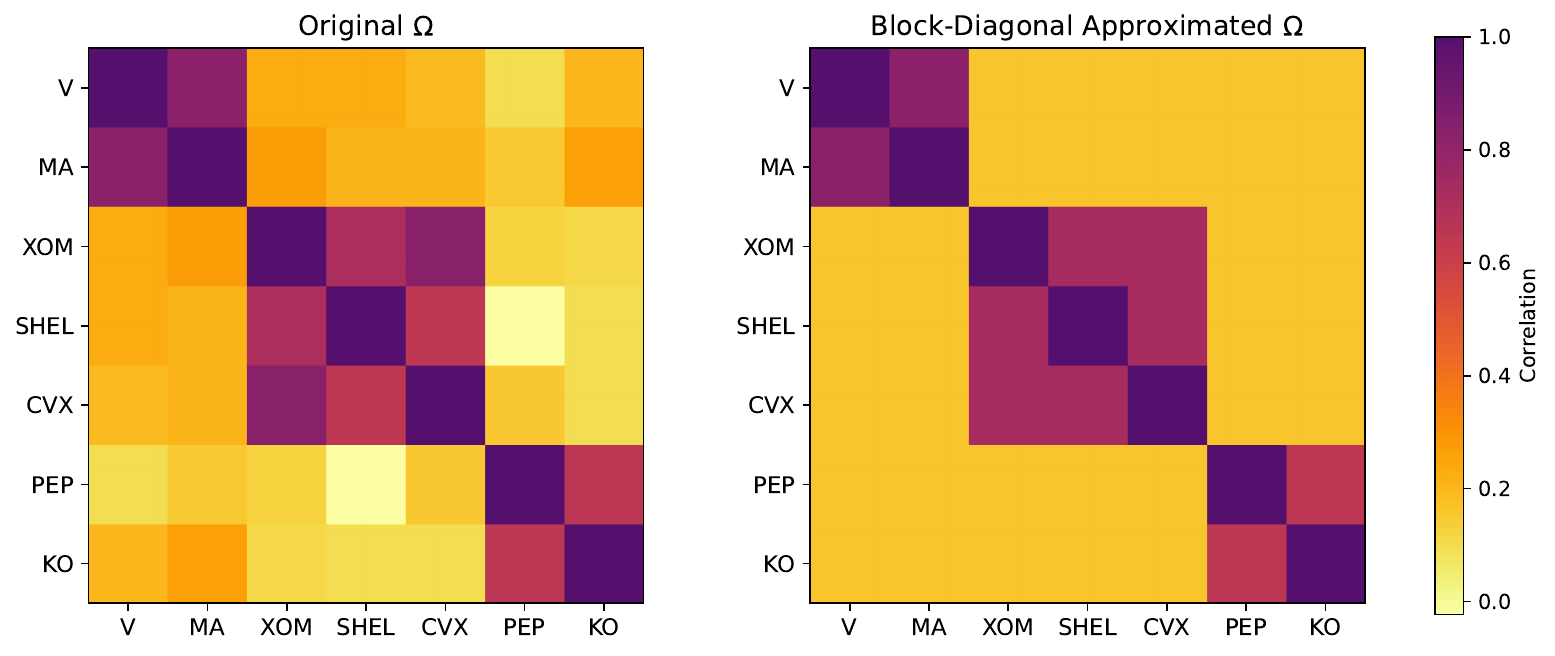}
    \caption{Residual correlation matrix $\Omega$ after removing the market factor from monthly returns (discussed in Section \ref{sec:K.block.examples} Figure~\ref{fig:cov_ex3r}), together with its block-structured approximation obtained using the proposed clustering procedure.}
    \label{fig:block_approx_example}
\end{figure}

\subsection{Block-Diagonal Shrinkage Methods}\label{sec:bds}

In this section, we introduce the proposed methods, which form the \emph{Block-Diagonal Shrinkage} (BDS) family. All three methods share the same estimation pipeline. First, the covariance matrix is transformed into a hierarchical representation using the single-linkage clustering and parameter estimation procedure described in Section~\ref{sec:block.approx}. This produces a block-diagonal approximation whose correlation structure is governed by only $K+1$ scalar parameters: $K$ within-block correlations and a single cross-block correlation.

This compression from $\tfrac{n(n-1)}{2}$ pairwise correlations to only
$K+1$ parameters constitutes a form of \emph{structural shrinkage}.
Rather than globally shrinking all covariance entries toward a target,
the approximation preserves the dominant dependence architecture while
removing much of the estimation noise that makes long-only
minimum-variance portfolios unstable. The resulting block representation
provides both statistical regularization and an explicit analytical
structure on which the portfolio construction procedures operate.

The three BDS variants differ only in how portfolio weights are computed
once the block structure has been estimated.

\begin{itemize}

\item \textbf{BDS-Obliv (correlation-oblivious block shrinkage).}
Applies the correlation-oblivious shrinkage procedure of Section~\ref{sec.block.correl}
to the block-diagonal approximation. Cross-block correlations are not used in
the allocation step, yielding a conservative and highly stable specification.

\item \textbf{BDS-Aware (correlation-aware block shrinkage).}
Applies the correlation-aware shrinkage procedure of Section~\ref{sec.block.correl}
to the same block structure. Compared to BDS-Obliv, it incorporates
within-block correlation information allowing sparse portfolios,
while still relying on the same block-level aggregation scheme.

\item \textbf{BDS-Direct (direct block-diagonal shrinkage).}
Solves the long-only minimum-variance problem directly on the
block-diagonal covariance using the fixed-point system of
Proposition~\ref{prop.K.block}, without additional threshold adjustment.
\end{itemize}

Conceptually, the BDS family separates {where} regularization is
introduced. BDS-Obliv and BDS-Aware combine structural shrinkage with an additional allocation-level stabilization step, whereas BDS-Direct
places all regularization at the covariance estimation stage and then
solves the resulting portfolio problem exactly. Section~\ref{sec:results}
shows empirically that the structural shrinkage alone in BDS-Direct is sufficient to achieve substantial improvements in stability while remaining close to the minimum-variance solution.

\section{Results}\label{sec:results}

We evaluate the block-diagonal shrinkage portfolio family against 24 benchmarks across three complementary data environments: a block-diagonal simulator that matches the theoretical setting in which the fragility mechanism was derived, a factor-based simulator that embeds no exogenous block structure (as in \citet{lopez2016building}) and serves as a structural counterexample, and real market data (following \citet{pedersen2021enhanced}). Consistent performance across all three environments is the primary evidence that the mechanism extends beyond any particular covariance model or asset universe.

\paragraph{Portfolio methods.}
We evaluate 27 methods: three classical benchmarks, 21 hierarchical variants, and three proposed methods. The classical benchmarks are the long-only Markowitz minimum-variance portfolio (LOM) \citep{markowitz1952}, the inverse-variance portfolio (IVP) \citep{clarke2006minimum}, and the equal-weight portfolio (EW) \citep{demiguel2009optimal}. The 21 hierarchical variants comprise three families: Hierarchical Risk Parity (HRP) \citep{lopez2016building}, Hierarchical Equal Risk Contribution (HERC) \citep{raffinot2018hierarchical}, and Hierarchical Clustering Asset Allocation (HCAA) \citep{raffinot2018hcaa}, each evaluated under seven agglomerative linkage criteria: single, complete, average, weighted, centroid, median, and Ward.
The proposed methods are the three members of the Block-Diagonal Shrinkage (BDS) family introduced in Section~\ref{sec:bds}: BDS-Obliv, BDS-Aware, and BDS-Direct. All three estimate block structure using the same single-linkage clustering procedure.

\paragraph{Performance metrics.}
The analysis focuses on two previously defined metrics (Section~\ref{sec:single_examples}): \emph{area}, the cumulative excess variance relative to the oracle minimum under the true covariance, and \emph{turnover}, the mean $L_1$ rebalancing magnitude. Because area is evaluated under the true covariance $V$, differences across methods isolate how estimation error propagates through portfolio construction over time; turnover captures the trading activity required to maintain the strategy.

Because IVP ignores cross-asset correlations, it rebalances only in response to changes in estimated variances, producing mechanically low turnover that is only weakly informative about portfolio stability. Reporting raw turnover alone therefore risks rewarding IVP for a property it obtains by construction. We therefore also report $\Delta\mathrm{Turn}$, the percentage increase in mean turnover relative to IVP within each panel, which interprets turnover as the incremental trading cost of exploiting dependence structure rather than as an absolute benchmark.

We additionally report \emph{annualized out-of-sample volatility} to verify that reductions in area translate into realized risk control. \emph{Active set} denotes the mean number of assets with positive weight, and the \emph{effective number of bets (ENB)}, computed as $1/\sum_i x_i^2$, measures allocation concentration, with larger values indicating more uniform weight distribution.

\paragraph{Backtest protocol.}
All experiments use a rolling-window backtest with $N = 250$ out-of-sample steps and daily rebalancing. In the simulated experiments (Sections~\ref{sec:bd_results} and~\ref{sec:lopez_results}), portfolio variance is evaluated under the known true covariance $V$ at each step, giving exact access to the oracle minimum; this quantity is unavailable in real market data.

All configurations use the sample covariance estimator and are averaged across 10 independent seeds. To assess sensitivity to estimation horizon $w$, the block-diagonal experiments evaluate both $w = 2n$ and $w = 3n$. Since rankings remain invariant across windows, subsequent experiments adopt $w = 2n$ as the reference specification. Appendix~\ref{sec:robustness} further evaluates sensitivity to cross-block correlation and eight alternative covariance estimators; BDS-Aware remains on the Pareto frontier throughout the correlation sweep, and method rankings are invariant across all estimator families.

The real-data protocol differs in window length and evaluation horizon and follows \citet{pedersen2021enhanced}; details are provided in Section~\ref{sec:real_data}. Computational details, including solver, hardware, and open-source repository are reported in Appendix~\ref{sec:appendix_results}.

\subsection{Block-Diagonal Covariance Structure}\label{sec:bd_results}

We begin with a covariance structure that matches the setting in which the fragility mechanism was derived. A universe of $n$ assets is partitioned into $K \sim \mathrm{Uniform}(\lfloor n/9\rfloor,\lfloor n/5\rfloor)$ blocks. Within-block correlations $\rho_i$ are drawn independently from $\mathrm{Uniform}(0.5,0.9)$ and cross-block dependence is controlled by $\rho = 0.1$, which is varied later in Appendix~\ref{sec:robustness}.

Two volatility regimes isolate the mechanism of interest. In the homogeneous regime, each block is assigned a single volatility level so all assets within a block share the same marginal risk; we vary the between-block spread $\sigma_s \in \{1.2, 1.5, 2.0\}$. In the heterogeneous regime, volatilities increase linearly from $0.5$ to $3.0$ within each block, creating systematic risk dispersion inside clusters.

The heterogeneous regime is the primary setting. It generates concentrated oracle portfolios and amplifies composition instability, which is precisely the behavior studied throughout this manuscript. The homogeneous regime serves as a control, verifying solutions do not under-perform in a dense-solution setting. Figure~\ref{fig:bd_dgp} illustrates the covariance structure across the three volatility conditions.

\begin{figure}[htbp]
\centering
\includegraphics[width=\textwidth]{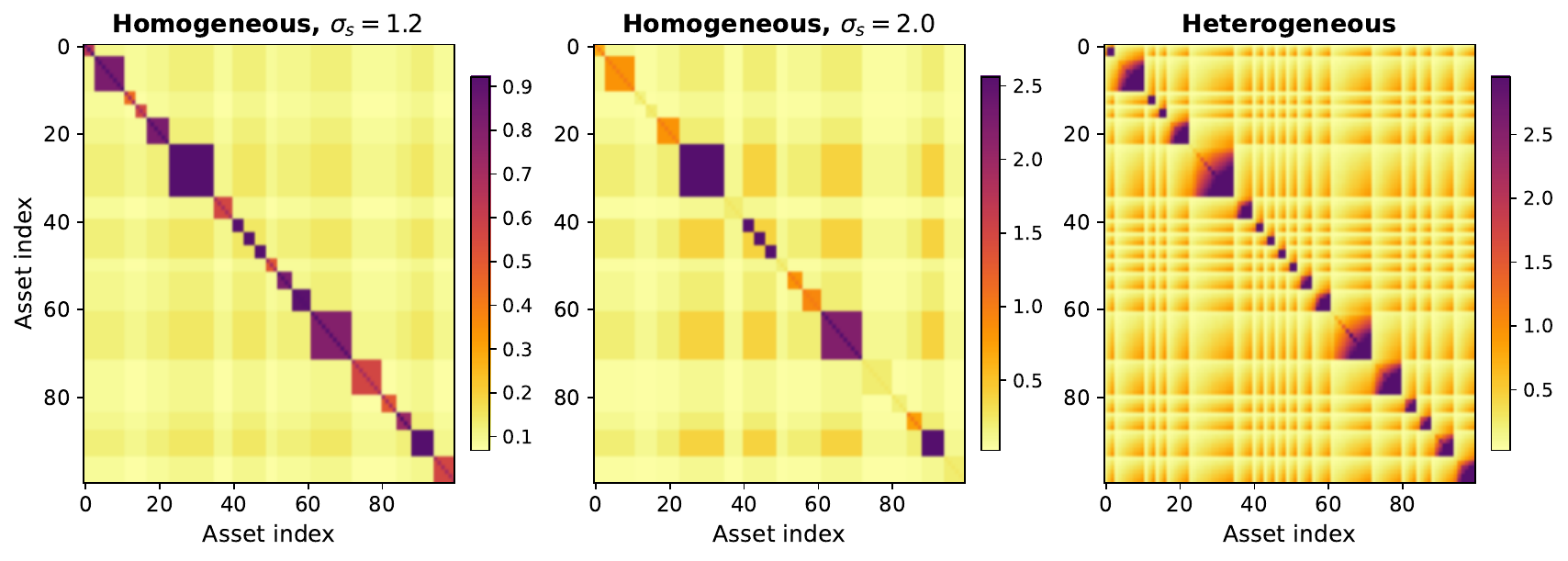}
\caption{Covariance matrices generated by the block-diagonal data-generating process at $n=100$ and $\rho=0.1$. All panels share the same block partition and within-block correlation draws; only volatility assignments differ: homogeneous with mild spread ($\sigma_s=1.2$), homogeneous with severe spread ($\sigma_s=2.0$), and heterogeneous with linearly increasing within-block volatilities. As volatility dispersion increases, concentrated low-risk regions emerge and the covariance structure becomes progressively less uniform.}
\label{fig:bd_dgp}
\end{figure}

\subsubsection{Pareto Dominance and Composition Stability}\label{sec:bd_main}

We evaluate all methods through the area--turnover Pareto frontier, where lower values on both axes are jointly preferred. Each point reports the mean over seeds and the frontier connects the non-dominated methods. All Pareto plots in this paper use logarithmic scales on both axes; consequently, visual separation reflects multiplicative differences, and even modest distances can correspond to order-of-magnitude improvements.

Figure~\ref{fig:e1_pareto_pooled} reports results averaged over $n \in \{50,\ldots,250\}$ and $w \in \{2n,3n\}$, separated by volatility regime.
BDS methods occupy the Pareto frontier throughout both  heterogeneous and homogeneous regimes, while every non-BDS benchmark is strictly dominated in at least one metric. The equal-weighted portfolio (EW) is omitted from the scatter because its zero turnover is trivial by construction; Table~\ref{tab:metrics_n100} confirms that this comes at substantially higher area.

In the heterogeneous setting, BDS-Direct and BDS-Aware define the dominant frontier, while BDS-Obliv occupies the low-turnover extreme by suppressing cross-block interaction at the cost of increased area. In the homogeneous regime, their difference in terms of turnover increases, but the ordering remains unchanged. Rank heatmaps across all $(n,\mathrm{regime})$ combinations are reported in Supplementary Material~\ref{sec:appendix_rank_e1_cal}.

\begin{figure}[htbp]
\centering
\includegraphics[width=\textwidth]{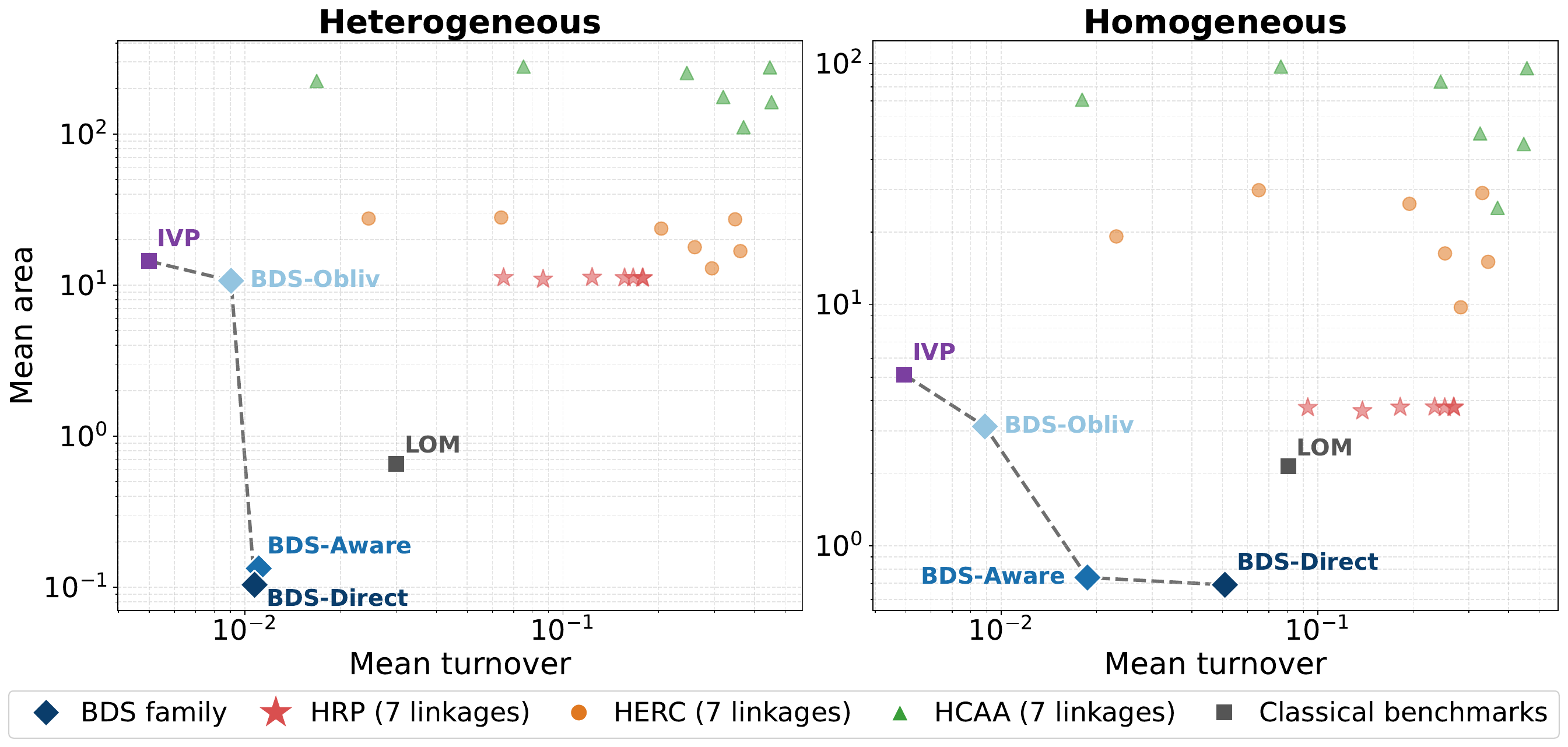}
\caption{Area--turnover Pareto scatter pooled over $n \in \{50,\ldots,250\}$ and $w \in \{2n,3n\}$ for heterogeneous and homogeneous regimes. Both axes use logarithmic scale. Each point is averaged over seeds, universe sizes, and window lengths within each regime. The per-$n$ breakdown appears in Supplementary Material~\ref{sec:appendix_pareto_by_n}.}
\label{fig:e1_pareto_pooled}
\end{figure}

Among non-BDS methods, long-only Markowitz is consistently the closest benchmark to the frontier, whereas hierarchical methods remain strictly interior across regimes. HRP is the strongest among them, suggesting that clustering alone provides some protection against estimation error. However, hierarchical diversification never closes the gap to methods that optimize directly over recovered structure.

Other classical methods that create fully dense, namely IVP and EW, achieve low turnover but incur substantially higher area, indicating that diversification alone does not resolve estimation-induced instability.
By contrast, BDS-Direct exploits covariance structure directly through block-level optimization. Its advantage depends on recoverable structure: gains are largest when block separation is clear and diminish as dependence becomes more diffuse. This sensitivity is not a weakness but the intended behavior of a structure-exploiting method.

Figure~\ref{fig:e1_fragility} quantifies performance relative to LOM through
$\log_{10}(\mathrm{area}_{\mathrm{method}}/\mathrm{area}_{\mathrm{LOM}})$: the zero line corresponds to LOM, negative values indicate improvement, and each unit on the y-axis is a tenfold change in relative area.
BDS-Direct falls monotonically from roughly $-0.6$ at $n=50$ to nearly $-1.0$ at $n=250$ in the heterogeneous regime, consistent with better recovery of latent block structure as dimensionality increases.
Hierarchical methods show the opposite behavior, trending upward and diverging into the red zone.
The double-headed arrow annotated at $n=250$ in each panel reports the fold-gap between BDS-Direct and the best performing benchmark directly, so the separation is legible without back-transforming the log scale; this gap grows with both universe size and volatility dispersion, reaching roughly $30\times$ at $\sigma_s = 2.0$ and remaining substantial in the heterogeneous regime.
Results remain stable across estimation windows (Supplementary Material~\ref{sec:appendix_tn_robustness}).

\begin{figure}[htbp]
\centering
\includegraphics[width=0.9\textwidth]{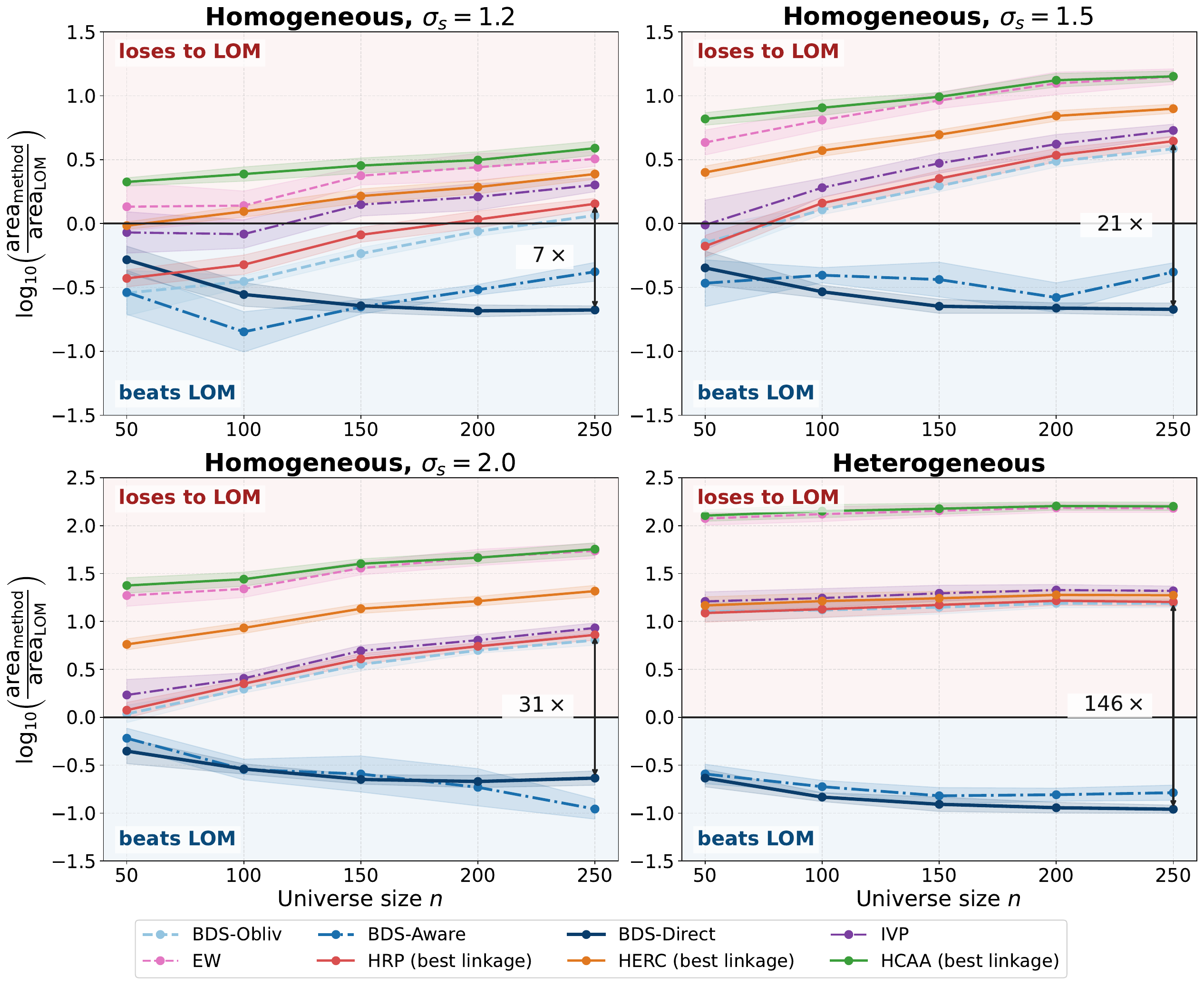}
\caption{$\log_{10}(\mathrm{area}{\mathrm{method}}/\mathrm{area}{\mathrm{LOM}})$ against universe size $n \in \{50,\ldots,250\}$ at $w=2n$. LOM serves as the strongest non-BDS baseline. Shaded bands show 95\% confidence intervals over 10 seeds. The annotation at $n=250$ reports the fold-gap in area between BDS-Direct and HRP. HRP, HERC, and HCAA are shown at the linkage with lowest mean area at $n=100$ (Ward in all three cases).}
\label{fig:e1_fragility}
\end{figure}

Table~\ref{tab:metrics_n100} isolates the source of these gains. In the heterogeneous regime, LOM and BDS-Direct produce portfolios of comparable size, yet their temporal behavior differs sharply: LOM repeatedly reconfigures its active set across rebalancing periods, whereas BDS-Direct maintains a substantially more stable composition. The reduction in area therefore arises primarily from composition stability rather than sparsity, and persists across window lengths (Supplementary Material~\ref{sec:appendix_tn_robustness}).

IVP and EW illustrate the opposite extreme. Both maintain broad support and achieve low turnover, yet incur much larger area, showing that stability without structural alignment is insufficient. Among hierarchical methods, HRP delivers the strongest variance--turnover compromise but remains far from the BDS frontier.

\begin{table}[htbp]
\centering
\caption{Mean metrics at $n=100$, $w=2n$, averaged over 10 seeds. Standard deviations omitted for readability. The homogeneous regime uses $\sigma_s=1.5$ for illustration; rankings are consistent across volatility spreads. HRP, HERC, and HCAA use their best linkage at $n=100$ (Ward in all cases). Bold marks the lowest area and volatility\ within each regime and the lowest turnover\ and $\Delta$Turn. EW turnover\ and $\Delta$Turn are omitted because zero rebalancing is trivial by construction.}
\label{tab:metrics_n100}
\resizebox{\textwidth}{!}{%
\begin{tabular}{l ccccc ccccc}
\toprule
 & \multicolumn{5}{c}{\textbf{Heterogeneous}} & \multicolumn{5}{c}{\textbf{Homogeneous} ($\sigma_s=1.5$)} \\
\cmidrule(lr){2-6}\cmidrule(lr){7-11}
Method & Area & Vol. & Turn. & $\Delta$Turn & Act./ENB & Area & Vol. & Turn. & $\Delta$Turn & Act./ENB \\
\midrule
BDS-Direct & \textbf{0.12} & \textbf{3.11} & 0.013 & +86\% & 17.6/16.9 & \textbf{0.74} & \textbf{4.68} & 0.066 & +1000\% & 47.4/26.8 \\
BDS-Aware  & 0.16 & \textbf{3.11} & 0.014 & +100\% & 18.5/17.1 & 1.00 & 4.70 & 0.030 & +400\% & 92.8/48.1 \\
BDS-Obliv  & 10.69 & 4.52 & 0.009 & \textbf{+29\%} & 100/25.4 & 3.19 & 4.94 & 0.012 & \textbf{+100\%} & 100/62.5 \\
\midrule
LOM \citep{markowitz1952}       & 0.83 & 3.20 & 0.037 & +429\% & 17.2/14.0 & 2.51 & 4.86 & 0.098 & +1533\% & 30.5/17.3 \\
IVP \citep{clarke2006minimum}        & 14.37 & 4.89 & \textbf{0.007} & --- & 100/34.6 & 4.91 & 5.09 & \textbf{0.006} & --- & 100/74.4 \\
EW \citep{demiguel2009optimal}         & 107.02 & 10.82 & --- & --- & 100/100 & 16.72 & 6.13 & --- & --- & 100/100 \\
HRP \citep{lopez2016building}        & 10.92 & 4.53 & 0.093 & +1229\% & 100/28.0 & 3.63 & 4.95 & 0.153 & +2450\% & 100/65.3 \\
HERC \citep{raffinot2018hierarchical}       & 13.27 & 4.77 & 0.255 & +3543\% & 100/24.0 & 9.37 & 5.56 & 0.254 & +4133\% & 100/66.3 \\
HCAA \citep{raffinot2018hcaa}       & 114.79 & 11.13 & 0.334 & +4671\% & 100/61.3 & 20.48 & 6.49 & 0.347 & +5683\% & 100/61.0 \\
\bottomrule
\end{tabular}}
\end{table}

The three BDS variants reveal different mechanisms for controlling instability. BDS-Obliv gains stability by suppressing cross-block interaction, which minimizes turnover but sacrifices variance performance under heterogeneous dependence. BDS-Aware and BDS-Direct preserve cross-block structure, with BDS-Direct consistently providing the closest approximation to the oracle solution when block recovery is accurate. Across variants, the dominant effect is stabilization of composition rather than increased diversification or sparsity.

In the homogeneous regime, lower volatility dispersion weakens active-set dynamics and produces denser optimal portfolios, reducing absolute performance differences. Nevertheless, rankings remain unchanged: BDS-Direct retains the lowest area, while BDS-Aware offers a lower-turnover alternative with only modest degradation in area. Overall, block-structured optimization remains effective even when sparse oracle structure is less pronounced.

Appendix~\ref{sec:robustness} reports robustness to cross-block correlation and covariance estimator choice; BDS-Aware remains on the Pareto frontier throughout the $\rho$ sweep and rankings are invariant across all estimator families.

\subsection{Factor Model Covariance Structure}\label{sec:lopez_results}

The data-generating process introduced in \citet{lopez2016building} constructs a covariance matrix from a latent factor model. A subset of $c$ assets loads on shared factors with loading noise $\eta$, a complementary subset of $u$ assets is purely idiosyncratic, and a global shock of magnitude $\delta$ affects all assets. No block structure is imposed exogenously. Any clustering arises endogenously from shared factor exposure rather than from designed partitions. As a result, this setting serves as a misspecified but structurally realistic stress test for methods that rely on latent grouping.

Despite the absence of explicit blocks, Figure~\ref{fig:lopez_clustering} shows that single-linkage clustering applied to even the mildest configuration ($n=60$, 25\% factor-loaded assets) recovers localized correlation clusters along the diagonal. These clusters correspond to subsets of assets sharing latent factor exposure. This induced structure is weak in magnitude but sufficiently coherent for block-based methods to exploit.

We evaluate six configurations $(u, c) \in \{(10,5), (20,10), (30,30), (40,20), (45,15), (67,33)\}$ under default parameters $\eta = 0.25$ and $\delta = 0.20$.

\begin{figure}[htbp]
\centering
\includegraphics[width=\textwidth]{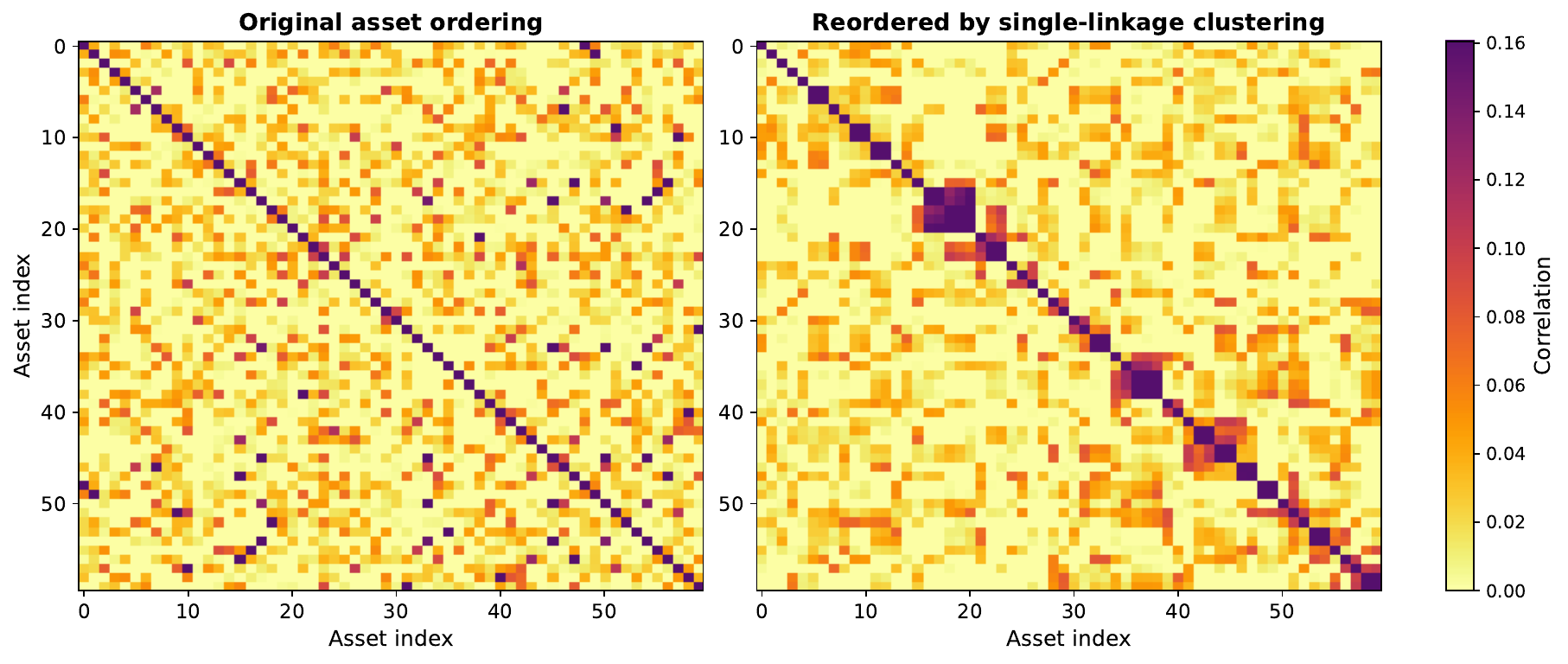}
\caption{Correlation matrix of a \citet{lopez2016building} data-generating process instance at the mildest configuration ($n=60$, 45 idiosyncratic and 15 factor-loaded assets). The original ordering shows no structural pattern. After single-linkage clustering, localized correlation blocks emerge along the diagonal, reflecting shared factor exposure.}
\label{fig:lopez_clustering}
\end{figure}

\subsubsection{Aggregate Performance}\label{sec:lopez_aggregate}

This factor model is a canonical benchmark for hierarchical risk parity work \citep{lopez2016building}, which shows that HRP improves over minimum-variance and inverse-variance portfolios in this setting. The baseline is replicated here: HRP-ward consistently dominates LOM and IVP across all configurations, confirming the expected behavior of hierarchical allocation under latent factor structure.

\begin{figure}[t]
\centering
\includegraphics[width=\textwidth]{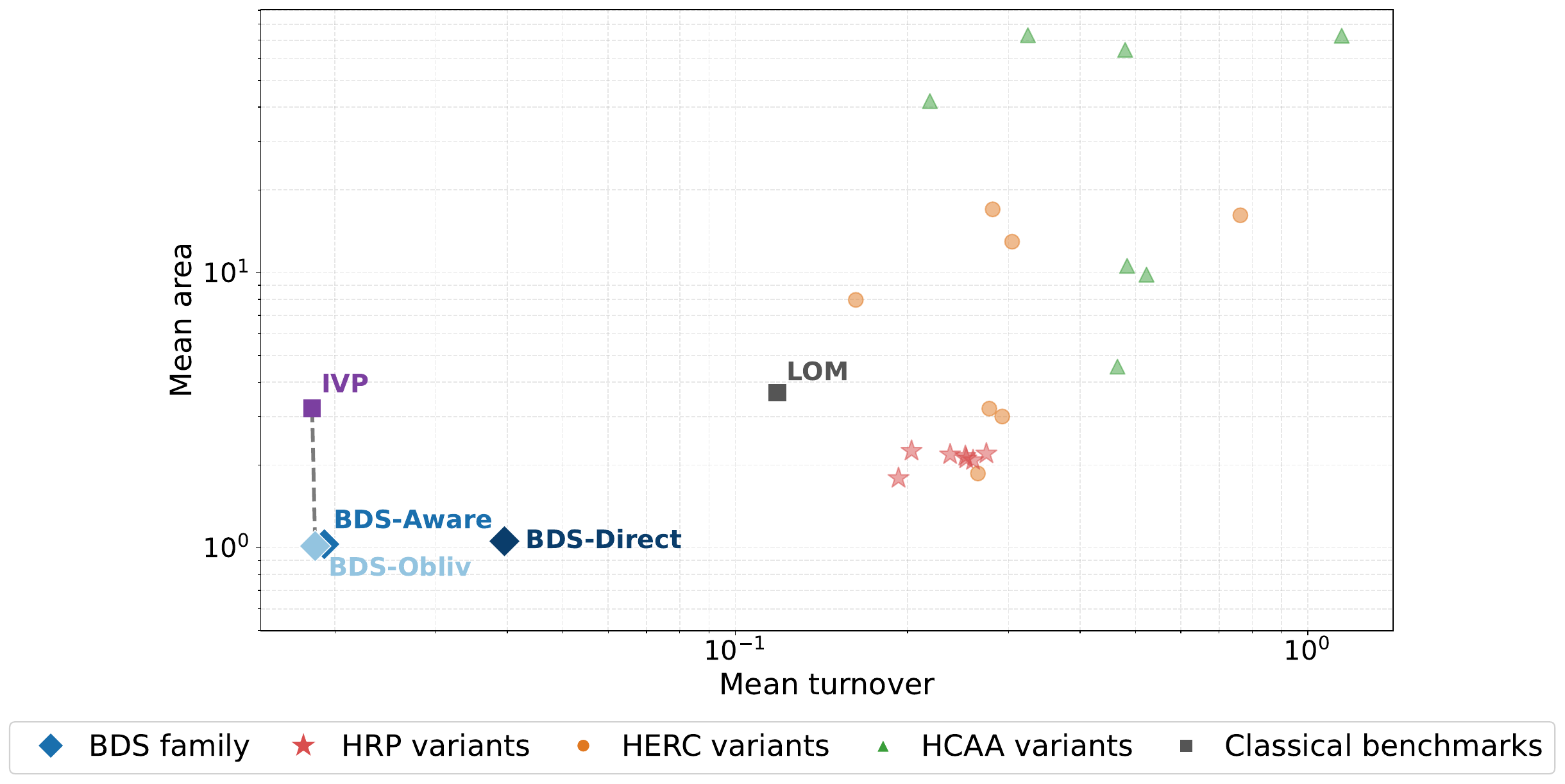}
\caption{Area–turnover Pareto scatter pooled over all six Lopez de Prado configurations and 10 seeds. BDS-Obliv anchors the low-turnover region, while BDS-Direct achieves the lowest area in the remaining configurations. HRP improves over classical benchmarks but remains interior to the BDS frontier across all cases. Per-configuration results are reported in Supplementary Material~\ref{sec:appendix_e2_byconfig}.}
\label{fig:e2_pareto}
\end{figure}

All BDS variants outperform this benchmark class across the full experimental grid. BDS-Obliv achieves the lowest area in four configurations, while BDS-Direct dominates in the remaining two. BDS-Aware remains consistently close to the frontier across all cases. In terms of turnover, all three BDS variants dominate HRP and LOM by a large margin. Across configurations, the BDS-to-LOM area ratio ranges from $3.0\times$ to $4.3\times$, with turnover reductions between $5\times$ and $11\times$ as reported in Table~\ref{tab:e2_metrics} in Supplementary Material~\ref{sec:appendix_e2_byconfig}.

These gains are notable because the model does not contain an explicit block structure. Instead, latent factor exposure induces partial clustering in the sample covariance, which is sufficient for block-based constructions to extract exploitable dependence. The results therefore indicate that block-diagonal portfolio construction is effective whenever the covariance exhibits sufficient clusterability, even when that structure is not explicitly present in the model.

Appendix~\ref{sec:lopez_sensitivity} reports sensitivity to factor-loading noise $\eta$: BDS-Aware and BDS-Obliv lead at low noise, while BDS-Direct becomes the dominant variant at high noise, reflecting a bias--variance trade-off in structural enforcement; the BDS family remains Pareto-stable throughout.

\subsection{Real-World Evidence}\label{sec:real_data}

The simulation evidence shows a consistent variance–turnover advantage for BDS methods across structurally distinct data-generating processes. A natural concern is that this behavior is an artifact of controlled environments where the covariance structure is known and evaluation is benchmarked against an oracle. In real markets, neither condition holds: the covariance must be estimated and the underlying structure is unobserved. This section tests whether the same Pareto dominance persists under these conditions.

\paragraph{Data and protocol.}
We use the Fama-French 49 industry portfolios \citep{fama1993common} as in \citet{pedersen2021enhanced}. The sample begins in July 1969, the first date with complete coverage, and the 12-month XSMOM construction removes the initial year, yielding an evaluation period from July 1970 to February 2026 ($T = 668$, $n = 49$). The empirical design follows \citet{pedersen2021enhanced} in using rolling covariance estimation and out-of-sample portfolio evaluation with signal-scaled returns.

Their baseline specification uses a 60-month window with 5\% correlation shrinkage, corresponding to $T/n \approx 1.22$. Our windows $w \in \{2n, 3n\}$ therefore operate in a comparatively richer estimation regime. Ledoit-Wolf shrinkage is used as the primary estimator since it adapts shrinkage intensity within each window and matches the risk-modeling setup in \citet{pedersen2021enhanced}. Appendix~\ref{sec:rd_estimator} reports a sample covariance robustness check showing that all conclusions are unchanged without shrinkage.

Raw industry returns are dominated by a common market component, with mean pairwise correlation of 0.558 across all pairs. In this setting, long-only Markowitz benefits mechanically from concentration into low-beta industries, which reflects market exposure rather than covariance estimation quality. To isolate cross-sectional structure, we remove the market factor via OLS and construct cross-sectional momentum (XSMOM) returns following the procedure in \citet{pedersen2021enhanced, jegadeesh1993returns, asness2013value, moskowitz2012time}. The resulting long and short legs are scaled to unit exposure. This reduces average pairwise correlation from 0.558 to 0.056, removing the dominant market mode so that performance differences reflect residual dependence structure rather than directional exposure.

\paragraph{Metrics.}
In real data the oracle covariance is unavailable, so the simulation area metric cannot be computed. We therefore report realized annualized out-of-sample volatility and mean turnover. Lower realized volatility corresponds to the main objective of the portfolio construction in practice, while turnover measures stability of portfolio composition. These two quantities jointly proxy the same variance–stability trade-off captured by area in simulation.

\subsubsection{Pareto Structure}\label{sec:rd_pareto}

Figure~\ref{fig:rd_pareto} shows the realized volatility–turnover frontier for real data, mirroring Figures~\ref{fig:e1_pareto_pooled} and~\ref{fig:e2_pareto}.
At $w = 2n$, the frontier consists of IVP, BDS-Obliv, and BDS-Direct. At $w = 3n$, BDS-Aware enters the frontier while BDS-Direct remains the lowest-volatility method among non-IVP strategies. No hierarchical method lies on the frontier at either window. This mirrors the heterogeneous simulation regime in Figure~\ref{fig:e1_pareto_pooled}, where the same three BDS variants anchor performance while all hierarchical constructions remain strictly interior.

LOM is dominated at both windows. At $w = 2n$, BDS-Direct achieves 9.1\% lower realized volatility and 37\% lower turnover simultaneously. This rules out a variance–rebalancing trade-off explanation: LOM is strictly inferior in both dimensions.
Hierarchical methods reduce estimation variance through recursive aggregation but do so at a steep turnover cost. Even the strongest specification across all linkage choices, HRP-median, remains dominated by BDS-Direct at $w=2n$ with higher volatility and substantially higher turnover. This aligns with the simulation results in Section~\ref{sec:lopez_aggregate}, where HRP improves on classical benchmarks but remains interior to the BDS frontier. The implication is consistent across datasets: recursive balancing stabilizes estimation, but does not recover the structure exploited by block-based construction once market exposure is removed.

Within the BDS family, the ordering is stable. BDS-Obliv achieves the lowest turnover in the class by suppressing cross-block interaction, placing it closest to IVP on the turnover axis but at higher volatility. BDS-Direct anchors the low-volatility frontier at short windows, while BDS-Aware becomes competitive at longer windows where estimation noise is reduced and cross-block dependence can be more reliably exploited. This pattern is consistent with the noise sensitivity behavior observed in Appendix~\ref{sec:robustness}.

\begin{figure}[htbp]
    \centering
    \includegraphics[width=\textwidth]{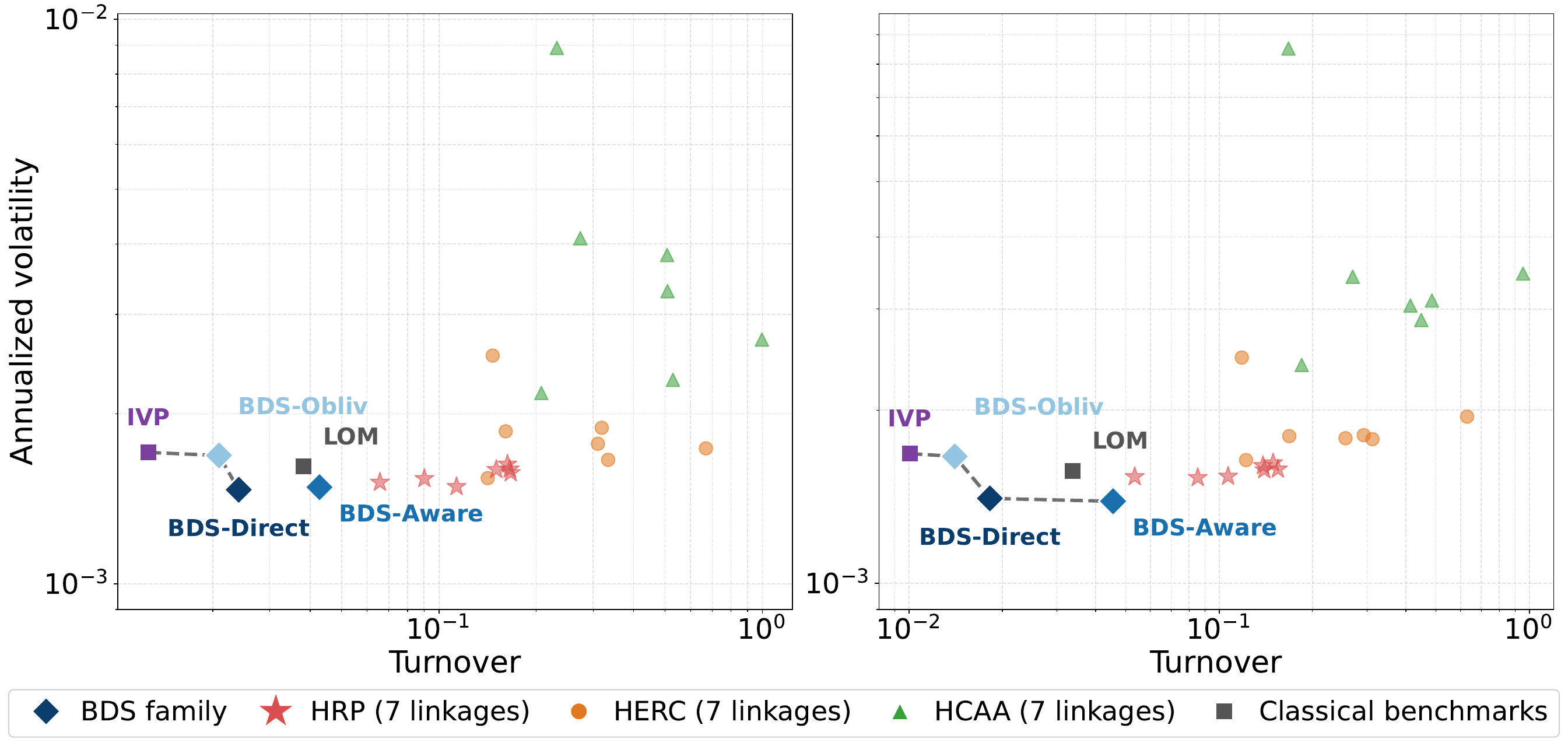}
    \caption{Realized annualized volatility vs.\ mean monthly turnover for 26 portfolio methods on Fama-French 49 industry XSMOM returns using the Ledoit-Wolf estimator, across estimation windows $w \in \{2n, 3n\}$ (July 1970--February 2026). All Pareto frontiers are shown on log-log axes, so distances correspond to multiplicative improvements. The frontier consists only of BDS methods and IVP at both windows; all hierarchical methods and LOM are strictly dominated in both metrics.}
    \label{fig:rd_pareto}
\end{figure}

The aggregate frontier compresses time variation into a single summary point per method. Figure~\ref{fig:rd_rolling} shows that the advantage is persistent across regimes rather than episodic. At $w = 2n$, BDS-Direct maintains lower rolling volatility than LOM in 84.8\% of months across the full sample period after initialization, including major crises such as 2001–02 and 2008–09. HRP tracks LOM closely in volatility but at consistently higher turnover, reinforcing that its gains are not efficiency improvements but alternative risk allocation behavior. Appendix~\ref{sec:rd_tn} reports results across both estimation windows: the volatility advantage of BDS-Direct over LOM grows from 9.1\% at $w=2n$ to 10.4\% at $w=3n$, and rankings remain unchanged.

\begin{figure}[htbp]
    \centering
    \includegraphics[width=\textwidth]{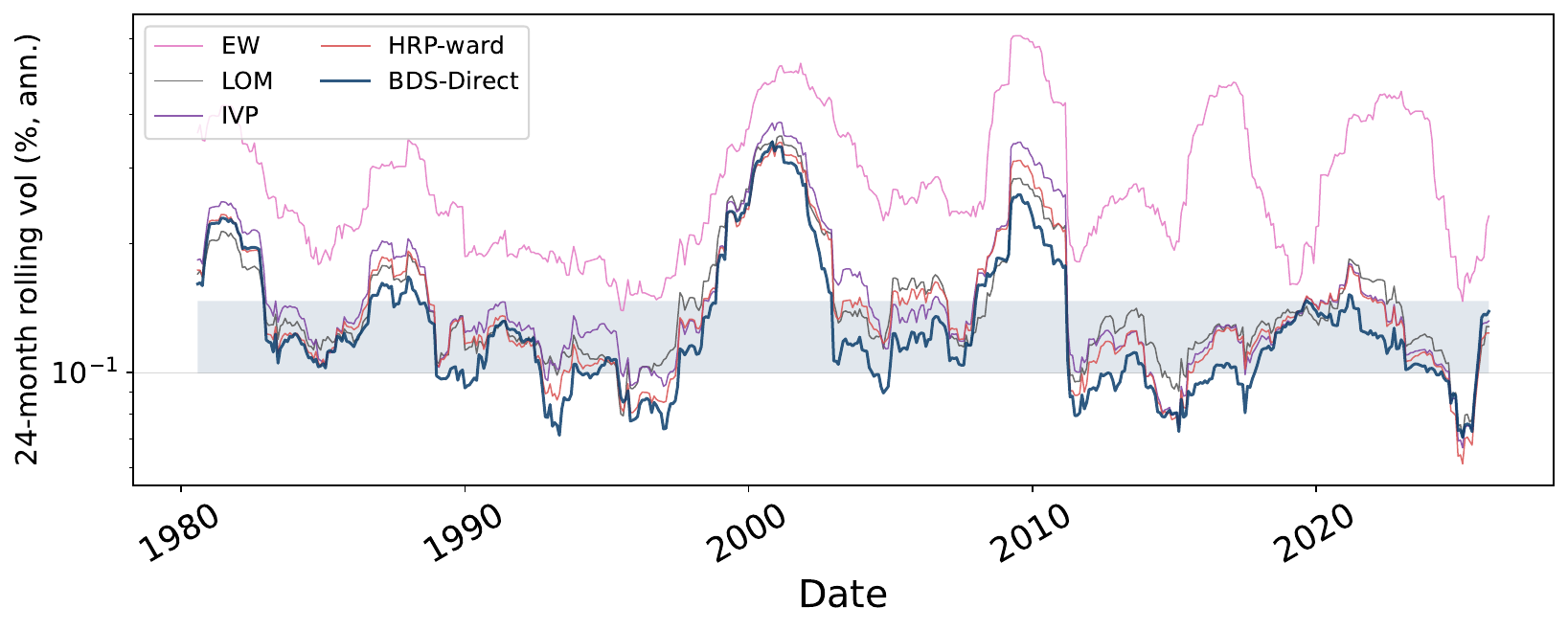}
    \caption{24-month rolling realized volatility for selected methods on Fama-French 49 industry XSMOM returns using Ledoit-Wolf estimation with $w=2n$. Best hierarchical method used (HRP) Ward linkage for consistency with simulated experiments. Best BDS method (BDS-Direct) is below LOM in 84.8\% of months respectively over 1970–2026.}
    \label{fig:rd_rolling}
\end{figure}

Taken together, the real-data results replicate the simulation conclusions: once the dominant market factor is removed, block-structured portfolio construction delivers a consistently superior volatility–turnover frontier, and BDS methods remain Pareto dominant across all specifications examined.

\section{Conclusions}\label{sec:conclusions}

This paper develops a structural explanation for instability in long-only minimum-variance portfolio selection by exploiting closed-form characterizations under block-structured correlation regimes. Building on analytically tractable solutions for constant and block-diagonal correlation matrices, we show that portfolio fragility is driven not only by estimation error in the covariance matrix but, more fundamentally, by threshold effects induced by the interaction between correlation structure and the positive-part operator in the optimal weights. These insights lead to a family of mechanism-driven robustification strategies that act directly on the correlation structure. Rather than modifying the optimization problem, we introduce structured shrinkage procedures that selectively attenuate unstable coupling patterns while preserving the dominant dependence structure. The resulting methods operate at different levels of granularity (from global correlation shrinkage to block-level and cluster-aware adjustments) and are designed to stabilize the active set implied by the closed-form solution.

Across numerical experiments, behavior is consistent and strongly regime-dependent. In homogeneous-volatility settings, all heuristic methods improve upon classical Markowitz, with inverse-correlation weighting among the strongest baselines. The correlation-oblivious shrinkage achieves the best overall performance, delivering the lowest realized risk and optimality loss while preserving full diversification. The correlation-aware variant provides no additional benefit in this regime, consistent with its design for volatility dispersion rather than uniform risk settings.

In heterogeneous-volatility regimes the ordering changes materially. Standard heuristics are uniformly dominated by Markowitz in both risk and objective value, reflecting limited adaptability to dispersion in marginal risks. The correlation-aware shrinkage becomes dominant, achieving the best trade-off between realized volatility, optimality loss, and sparsity. The correlation-oblivious variant remains competitive but is less responsive to volatility dispersion, while still improving over classical heuristics. Across regimes, volatility heterogeneity is the primary driver of instability, and explicitly incorporating it is necessary for robust performance.

These patterns extend directly to the portfolio construction results in Section~\ref{sec:results}. After removing the dominant market factor, all three block-diagonal shrinkage methods lie on the empirical volatility–turnover frontier across estimation windows, covariance estimators, and dataset specifications. This dominance is structural rather than episodic: it persists across time, estimator choice, and universe size, and is driven by stabilization of the active set rather than sparsity alone. In contrast, hierarchical methods and classical benchmarks remain strictly interior to the frontier in all specifications, indicating that recursive diversification without explicit exploitation of cross-sectional dependence is insufficient once the market mode is removed. Across all experiments, gains are simultaneous in volatility and turnover, ruling out simple trade-offs between risk reduction and trading intensity.

Taken together, the evidence supports a single mechanism: instability in long-only minimum-variance portfolios is primarily a compositional phenomenon induced by structured dependence, not solely an estimation problem. The proposed shrinkage procedures are effective because they stabilize the active set implied by the closed-form solution under block structure, yielding consistent improvements across regimes, estimators, and asset universes.

Future work will extend these ideas in several directions. One natural avenue is to develop robustification procedures directly from Proposition~\ref{prop.K.block}, rather than through sequential decomposition. This remains challenging because the block-diagonal characterization is defined implicitly through a coupled fixed-point system in which all thresholds depend jointly on global and block-level correlation structure. As a result, perturbations intended to stabilize one region of the solution can propagate throughout the system, making globally consistent robustification nontrivial.
A particularly interesting direction is to move beyond minimum-variance portfolios and study whether the mechanisms identified here extend to mean--variance formulations. While this work isolates instability arising from covariance structure alone, practical portfolio construction also depends on expected return estimates, which are themselves a major source of estimation error and instability. Understanding how correlation-driven threshold effects interact with return uncertainty may provide a more complete structural explanation of portfolio fragility and motivate new robustification strategies for joint mean--variance estimation.

\clearpage
\begin{appendices}
\section{Proofs}

\subsection{Proof of Lemma \ref{lemma:correlation.reformulation}}
\label{app:lemma.correlation.reformulation}

\lemmacorrelationreformulation*

\begin{proof}
By using the change of variables $z := \Sigma x$,  problem~\eqref{eq.minvar} can be  equivalently stated as
\begin{equation}\label{eq.z.form}
\begin{aligned}
\min_{z\in\mathbb{R}^n}\quad & \frac12 z^\top \Omega z \\
\text{s.t.}\quad & \theta^\top z = 1,\\
& z\ge 0.
\end{aligned}
\end{equation}
The KKT conditions for~\eqref{eq.z.form} are
\begin{subequations}\label{eq.kkt.z}
\begin{align}
\Omega z &= \lambda \theta + \mu, \label{eq.kkt.z.a}\\
\theta^\top z &= 1, \\
z &\ge 0,\quad \mu\ge 0,\quad \mu^\top z = 0, \label{eq.kkt.z.b}
\end{align}
\end{subequations}
where $\lambda\in\mathbb{R}$ is the multiplier associated with the equality
constraint.

Multiplying~\eqref{eq.kkt.z.a} by $z^\top$ and using complementary slackness
gives
\[
z^\top \Omega z
=
\lambda\, \theta^\top z + z^\top \mu
=
\lambda.
\]
Since $\Omega$ is positive definite and $z\neq 0$ (because
$\theta^\top z =1$), it follows that
$
\lambda = z^\top \Omega z >0.
$
Hence dividing~\eqref{eq.kkt.z.a} by $\lambda$ and letting $y :=z/\lambda$ and $\tilde \mu := \mu/\lambda$ we get
\[
\Omega y = \theta + \tilde\mu,
\]
and also
\[
y\ge 0,\qquad \tilde\mu\ge 0,\qquad \tilde\mu^\top y
=
\frac{1}{\lambda^2}\mu^\top z
=
0.
\]
Thus $(y,\tilde\mu)$ satisfies
\begin{subequations}\label{eq.kkt.y}
\begin{align}
\Omega y &= \theta + \tilde\mu, \\
y &\ge 0,\quad \tilde\mu\ge 0,\quad \tilde\mu^\top y = 0,
\end{align}
\end{subequations}
which are precisely the KKT conditions of~\eqref{eq.min.correl}. Therefore,
$y$ solves~\eqref{eq.min.correl}.

Finally, since $z = \Sigma x$ and $z = \lambda y$ it follows that 
$x=\Theta z$ and $1 = \theta\transp z = \lambda\theta^\top y = \lambda \1^\top \Theta y$.  Therefore,
\[
x=\Theta z = \lambda \Theta y =  \frac{\Theta y}{\1^\top \Theta y},
\]
which establishes~\eqref{eq.sol.x}.
\end{proof}

\subsection{Proof of Proposition \ref{prop.one.block}} \label{app:proof.prop.one.block}

\propositiononeblock*

\begin{proof}
Substituting $\Omega = (1-\rho)I + \rho \1\1\transp$ into the optimality condition~\eqref{eq.kkt_a} gives
\begin{equation}\label{eq.kkt_expanded}
(1-\rho)y + \rho (\1\transp y)\1 = \theta + \mu.
\end{equation}
Define
\begin{equation}\label{eq.theta.bar.single}
\bar\theta := \rho (\1\transp y).
\end{equation}
Then~\eqref{eq.kkt_expanded} becomes
\begin{equation}\label{eq.balance}
(1-\rho)y = \theta - \bar\theta \1 + \mu \Leftrightarrow (1-\rho)y-\mu = \theta - \bar\theta \1.
\end{equation}
Thus the complementarity conditions~\eqref{eq.kkt_b}: $y \ge 0$, $\mu \ge 0$, and $\mu_i y_i = 0$ for all $i$ imply that 
\[
(1-\rho)y = (\theta - \bar\theta \1)^+, 
\quad
\mu = (\bar\theta \1 - \theta)^+,
\]
and hence
\begin{equation}\label{eq.y.final}
y = \frac{1}{1-\rho}(\theta - \bar\theta \1)^+.
\end{equation}
Finally, substituting~\eqref{eq.y.final} into the definition~\eqref{eq.theta.bar.single} yields
\begin{align*}
\bar\theta 
= \rho \, \1\transp y = \frac{\rho}{1-\rho} \, \1\transp (\theta - \bar\theta \1)^+,
\end{align*}
which is precisely the fixed-point equation~\eqref{eq.fixed.point}.
We next show that \eqref{eq.fixed.point.sol} solves the fixed-point equation~\eqref{eq.fixed.point} under the ordering~\eqref{eq.order}.  
Let $i$ be the largest index such that $\bar\theta \le \theta_i$. Then~\eqref{eq.order} and~\eqref{eq.fixed.point} imply
 that
\[
\bar \theta = \frac{\rho}{1-\rho} = \1\transp (\theta - \bar\theta \1)^+ = 
\frac{\rho}{1-\rho}
\sum_{j=1}^i(\theta_j - \bar \theta) = \frac{\rho}{1-\rho}\left(\sum_{j=1}^i\theta_j - i\bar \theta\right).
\]
After rearranging and simplifying terms we obtain
\[
\bar \theta = \frac{\rho}{1+(i-1)\rho} \sum_{j=1}^i \theta_j,
\]
which is~\eqref{eq.fixed.point.sol}. To finish, recall that $i$ is the largest index such that $\bar \theta \le \theta_i$.  That is, the largest index $i$ such that
\[
\frac{\rho}{1+(i-1)\rho}\sum_{j=1}^i \theta_j \le \theta_i.
\]
By rearranging terms, this inequality can be equivalently stated as
\[
\rho \le \frac{\theta_i}{\theta_i + \sum_{j=1}^{i-1} (\theta_j - \theta_i)}
\]
which is the same as~\eqref{eq.largest.i}
Thus, $i$ is the largest index for which ~\eqref{eq.largest.i}  holds.
\end{proof}

\subsection{Proof of positive definiteness of a block-diagonal correlation matrix}\label{app,proof.pd}

\begin{lemma}
Let
\[
\Omega = (1-\rho)\operatorname{Diag}(\Omega_1,\dots,\Omega_K)
+ \rho \1\1\transp,
\qquad
\Omega_i=(1-\rho_i)I_i+\rho_i \1_i\1_i\transp.
\]
Then $\Omega \succ 0$ if
\[
-\frac{1}{n_i-1}<\rho_i<1,
\qquad i=1,\dots,K,
\]
and
\[
-\left(
\sum_{i=1}^K
\frac{n_i}{1+(n_i-1)\rho_i}
-1
\right)^{-1}
< \rho < 1.
\]
\end{lemma}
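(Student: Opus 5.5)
The plan is to view $\Omega$ as a rank-one perturbation of the positive definite block-diagonal matrix $D:=\operatorname{Diag}(\Omega_1,\dots,\Omega_K)$, and to handle the sign of $\rho$ separately.

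\emph{Step 1: $D\succ 0$ and $D^{-1}\1$ is explicit.} Each $\Omega_i=(1-\rho_i)I_i+\rho_i\1_i\1_i\transp$ has eigenvalue $1-\rho_i$ on $\1_i^\perp$ and eigenvalue $1+(n_i-1)\rho_i$ on $\1_i$. Both are positive exactly when $-\frac{1}{n_i-1}<\rho_i<1$, so $D\succ 0$. Moreover $\Omega_i\1_i=(1+(n_i-1)\rho_i)\1_i$, which gives
\[
\1\transp D^{-1}\1=\sum_{i=1}^K\frac{n_i}{1+(n_i-1)\rho_i}=:S .
\]
Since $\rho_i<1$, each summand exceeds $n_i/n_i=1$ whenever $n_i\ge 2$, and equals $1$ when $n_i=1$. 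Hence $S>1$ in every nontrivial case ($n\ge 2$), so the lower bound on $\rho$ in the statement is well defined and negative.

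\emph{Step 2: the case $0\le\rho<1$.} Here $(1-\rho)D\succ 0$ and $\rho\1\1\transp\succeq 0$, so their sum $\Omega$ is positive definite.

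\emph{Step 3: the case $\rho<0$.} Write $\Omega=A-c\,\1\1\transp$ with $A=(1-\rho)D\succ 0$ and $c=-\rho>0$. Use the standard fact, obtainable from the Schur complement or the matrix determinant lemma, that $A-c\,uu\transp\succ 0$ if and only if $c\,u\transp A^{-1}u<1$. With $u=\1$ this condition reads
\[
\frac{-\rho}{1-\rho}\,S<1 .
\]
Because $1-\rho>0$, this is equivalent to $-\rho S<1-\rho$, i.e. $-\rho(S-1)<1$. Using $S>1$ from Step 1, this becomes $\rho>-(S-1)^{-1}$, which is exactly the stated hypothesis.

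\emph{Main obstacle.} The only delicate point is the bookkeeping in Step 1: confirming $S>1$ so that the final division by $S-1$ is legitimate and preserves the inequality direction. The remaining steps are routine spectral and rank-one arguments.
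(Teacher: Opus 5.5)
Your proof is correct and is essentially the paper's argument: both treat $\Omega$ as a rank-one perturbation of $(1-\rho)D$ with $D=\operatorname{Diag}(\Omega_1,\dots,\Omega_K)\succ 0$, compute $\1\transp D^{-1}\1=S$ using the eigenvector $\1_i$ of each block, and reduce positive definiteness to $1+\rho(S-1)>0$. The paper does this in one step, by congruence with $D^{-1/2}$ and the eigenvalues of $(1-\rho)I+\rho vv\transp$, whereas you handle the case $\rho\ge 0$ trivially and apply the rank-one downdate criterion (itself proved by that same congruence) for $\rho<0$. Your check that $S>1$ is slightly more careful than the paper's: the paper claims every summand exceeds one, but a singleton block ($n_i=1$) contributes exactly one.
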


\begin{proof}
Each block $\Omega_i$ is a constant-correlation matrix with eigenvalues
\[
1-\rho_i
\quad (\text{multiplicity } n_i-1),
\qquad
1+(n_i-1)\rho_i
\quad (\text{simple eigenvalue}).
\]
Hence,
\[
\Omega_i \succ 0
\quad \Longleftrightarrow \quad
-\frac{1}{n_i-1} < \rho_i < 1.
\]
Under these conditions,
\[
A:=\operatorname{Diag}(\Omega_1,\dots,\Omega_K)
\]
is positive definite.

Since positive definiteness is preserved under congruence transformations,
\[
\Omega \succ 0
\quad \Longleftrightarrow \quad
A^{-1/2}\Omega A^{-1/2}\succ 0.
\]
Using the definition of $\Omega$,
\[
A^{-1/2}\Omega A^{-1/2}
=
(1-\rho)I+\rho vv\transp,
\qquad
v:=A^{-1/2}\1.
\]
This is a rank-one perturbation of $(1-\rho)I$, whose eigenvalues are
\[
1-\rho
\quad (\text{multiplicity } n-1),
\qquad
1-\rho+\rho\|v\|^2
\quad (\text{simple eigenvalue}).
\]
Hence,
\[
\Omega \succ 0
\]
if and only if
\[
1-\rho>0,
\qquad
1-\rho+\rho\|v\|^2>0.
\]
It remains to compute $\|v\|^2$. Using the block structure,
\[
\|v\|^2
=
\1\transp A^{-1}\1
=
\sum_{i=1}^K
\1_i\transp \Omega_i^{-1}\1_i.
\]
Now, since
\[
\Omega_i=(1-\rho_i)I_i+\rho_i \1_i\1_i\transp,
\]
the vector $\1_i$ is an eigenvector of $\Omega_i$ associated with the eigenvalue
\[
1-\rho_i+\rho_i n_i
=
1+(n_i-1)\rho_i.
\]
Therefore,
\[
\Omega_i^{-1}\1_i
=
\frac{1}{1+(n_i-1)\rho_i}\1_i,
\]
and consequently
\[
\1_i\transp \Omega_i^{-1}\1_i
=
\frac{n_i}{1+(n_i-1)\rho_i}.
\]
Substituting back yields
\[
\|v\|^2
=
\sum_{i=1}^K
\frac{n_i}{1+(n_i-1)\rho_i}.
\]
The second positivity condition therefore becomes
\[
1+\rho\left(
\sum_{i=1}^K
\frac{n_i}{1+(n_i-1)\rho_i}
-1
\right)>0.
\]
Moreover, under
\[
-\frac{1}{n_i-1}<\rho_i<1,
\]
we have
\[
1+(n_i-1)\rho_i<n_i,
\]
and hence
\[
\frac{n_i}{1+(n_i-1)\rho_i}>1.
\]
Therefore,
\[
\sum_{i=1}^K
\frac{n_i}{1+(n_i-1)\rho_i}
-1
>0,
\]
so the previous inequality is equivalent to
\[
\rho>
-\frac{1}{
\displaystyle
\sum_{i=1}^K
\frac{n_i}{1+(n_i-1)\rho_i}
-1
}.
\]
Combining this with the condition $1-\rho>0$ yields
\[
-\frac{1}{
\displaystyle
\sum_{i=1}^K
\frac{n_i}{1+(n_i-1)\rho_i}
-1
}
< \rho < 1,
\]
which completes the proof.
\end{proof}

\subsection{Proof of Proposition \ref{prop.K.block}} \label{app:proof.prop.k.block}

\propositionkblock*

\begin{proof}
Let $y$ be an optimal solution of \eqref{eq.min.correl}. By
Lemma~\ref{lemma:correlation.reformulation}, the KKT conditions
\eqref{eq.kkt_a}--\eqref{eq.kkt_b} hold, i.e.,
\[
\Omega y = \theta + \mu,
\qquad
y \ge 0,\quad \mu \ge 0,\quad \mu^\top y = 0.
\]
Partition $y$, $\theta$, and $\mu$ according to the block structure,
\[
y=(y^{(1)},\dots,y^{(K)}),\qquad
\theta=(\theta^{(1)},\dots,\theta^{(K)}),\qquad
\mu=(\mu^{(1)},\dots,\mu^{(K)}).
\]
Using the decomposition
\[
\Omega
=
(1-\rho)\Diag(\Omega_1,\dots,\Omega_K)
+\rho \1\1^\top,
\qquad
\Omega_i=(1-\rho_i)I_i+\rho_i \1_i\1_i^\top,
\]
the $i$-th block of the KKT stationarity condition becomes
\[
(1-\rho)\Omega_i y^{(i)}
+
\Bigl(\rho\sum_{j=1}^K \1_j^\top y^{(j)}\Bigr)\1_i
=
\theta^{(i)}+\mu^{(i)}.
\]
Define
\[
\bar\theta_i := (1-\rho)\rho_i\,\1_i^\top y^{(i)},
\qquad
\hat\theta := \rho \sum_{j=1}^K \1_j^\top y^{(j)}.
\]
Then we rewrite the block equation as
\[
(1-\rho)(1-\rho_i)y^{(i)}
=
\theta^{(i)} - (\hat\theta + \bar\theta_i)\1_i + \mu^{(i)}.
\]
Applying complementarity \eqref{eq.kkt_b} yields
\[
y^{(i)}
=
\frac{1}{(1-\rho)(1-\rho_i)}
\left(
\theta^{(i)} - (\hat\theta + \bar\theta_i)\1_i
\right)^+,
\]
which proves \eqref{eq.sol.y.block}.

Finally, substituting this expression back into the definitions of
$\bar\theta_i$ and $\hat\theta$ yields the fixed-point relations
\[
\bar\theta_i
=
\frac{\rho_i}{1-\rho_i}
\1_i^\top
\left(
\theta^{(i)} - (\hat\theta + \bar\theta_i)\1_i
\right)^+,
\]
and
\[
\hat\theta
=
\frac{\rho}{1-\rho}
\sum_{j=1}^K
\frac{1}{1-\rho_j}
\1_j^\top
\left(
\theta^{(j)} - (\hat\theta + \bar\theta_j)\1_j
\right)^+,
\]
completing the proof.
\end{proof}

\subsection{Proof of Proposition \ref{prop:block.fit}} \label{app:proof.prop.block.fit}

\propositionblockfit*

\begin{proof}
Since both $\Omega^{\mathrm{data}}$ and $\Omega$ are correlation matrices,
their diagonal entries are equal to one and therefore do not contribute to the
Frobenius loss. Thus,
\[
\|\Omega^{\mathrm{data}}-\Omega\|_F^2
=
\sum_{(j,k):\,j\neq k}
\left(\Omega^{\mathrm{data}}_{jk}-\Omega_{jk}\right)^2.
\]

Under the structured form~\eqref{eq.block.structure}, all cross-block entries
share the common value $\rho$, while all off-diagonal entries within block $i$
share the value
\[
v_i := (1-\rho)\rho_i+\rho.
\]
Therefore, the loss can be decomposed as
\[
\|\Omega^{\mathrm{data}}-\Omega\|_F^2
=
\sum_{(p,q)\in S_C}
\left(\Omega^{\mathrm{data}}_{pq}-\rho\right)^2
+
\sum_{i=1}^K
\sum_{\substack{j,k\in C_i\\ j\neq k}}
\left(\Omega^{\mathrm{data}}_{jk}-v_i\right)^2.
\]
The unique minimizer of the first term of this separable loss is evidently~\eqref{eq.rho.fit} and the unique minimizer of the second term is
\[
v_i^*
=
\frac{1}{n_i(n_i-1)}
\sum_{\substack{j,k\in C_i\\ j\neq k}}
\Omega^{\mathrm{data}}_{jk}, \; i=1,\dots,K.
\]
Finally by recovering $\rho_i$ from
$
v_i=(1-\rho)\rho_i+\rho
$
we get
\[
\rho_i^*
=
\frac{v_i^*-\rho^*}{1-\rho^*},
\]
which establishes~\eqref{eq.rhoi.fit}.
\end{proof}

\section{Robustness Results}\label{sec:appendix_results}

All BDS, HRP, HERC, and HCAA portfolios admit closed-form solutions and are implemented in \texttt{NumPy}. The LOM benchmark is formulated in \texttt{Pyomo} \citep{bynum2021pyomo} and solved with \texttt{IPOPT} \citep{wachter2006ipopt}. Experiments run on Ubuntu with eight Intel Xeon Gold 6234 CPUs (3.30 GHz) and 1 TB RAM. Code is available at our \href{https://github.com/dovallev/fragility-minimum-variance}{GitHub repository}\footnote{https://github.com/dovallev/fragility-minimum-variance}.


\subsection{Robustness: Cross-Block Correlation and Covariance Estimator}\label{sec:robustness}

The previous experiments rely on a fixed covariance structure and a baseline estimator. We now test robustness along two  dimensions:  changes of cross-block correlation and alternative covariance estimation procedures. Both perturbations act on the same object, the reliability of cross-sectional dependence information used by portfolio construction. This separates sensitivity to misspecification in the data-generating process from sensitivity to statistical estimation error.

\paragraph{Cross-block correlation.}

We vary $\rho \in \{0.05, 0.1, 0.2, 0.3, 0.4, 0.5\}$ while holding all other aspects of the data-generating process fixed. Figure~\ref{fig:e4_pareto_rho} reports Pareto fronts for the illustrative case of $n=100$ and $\sigma_s=1.2$.
Increasing cross-block dependence induces a continuous but asymmetric degradation across methods. When $\rho$ is small, separating assets into blocks aligns well with the dependence structure, and methods that suppress cross-block interaction remain competitive. As $\rho$ increases, this representation becomes increasingly restrictive and the cost of ignoring inter-block coupling rises sharply in realized variance.

BDS-Aware remains on the Pareto frontier throughout the entire sweep, indicating that preserving both within- and cross-block information yields a robust trade-off across dependence regimes. At low global correlation, BDS-Obliv also lies on the frontier and achieves the lowest turnover, but its area deteriorates rapidly as $\rho$ increases and cross-block dependence becomes more important. At the same time, BDS-Direct enters the frontier and progressively becomes the lowest-area BDS variant at higher $\rho$, indicating that enforcing structure through the recovered block representation becomes increasingly beneficial as global coupling strengthens. Despite this within-family transition, all BDS variants remain separated from the benchmark and hierarchical methods across the full range of $\rho$, preserving a superior area--turnover trade-off throughout.

\begin{figure}[htbp]
\centering
\includegraphics[width=\textwidth]{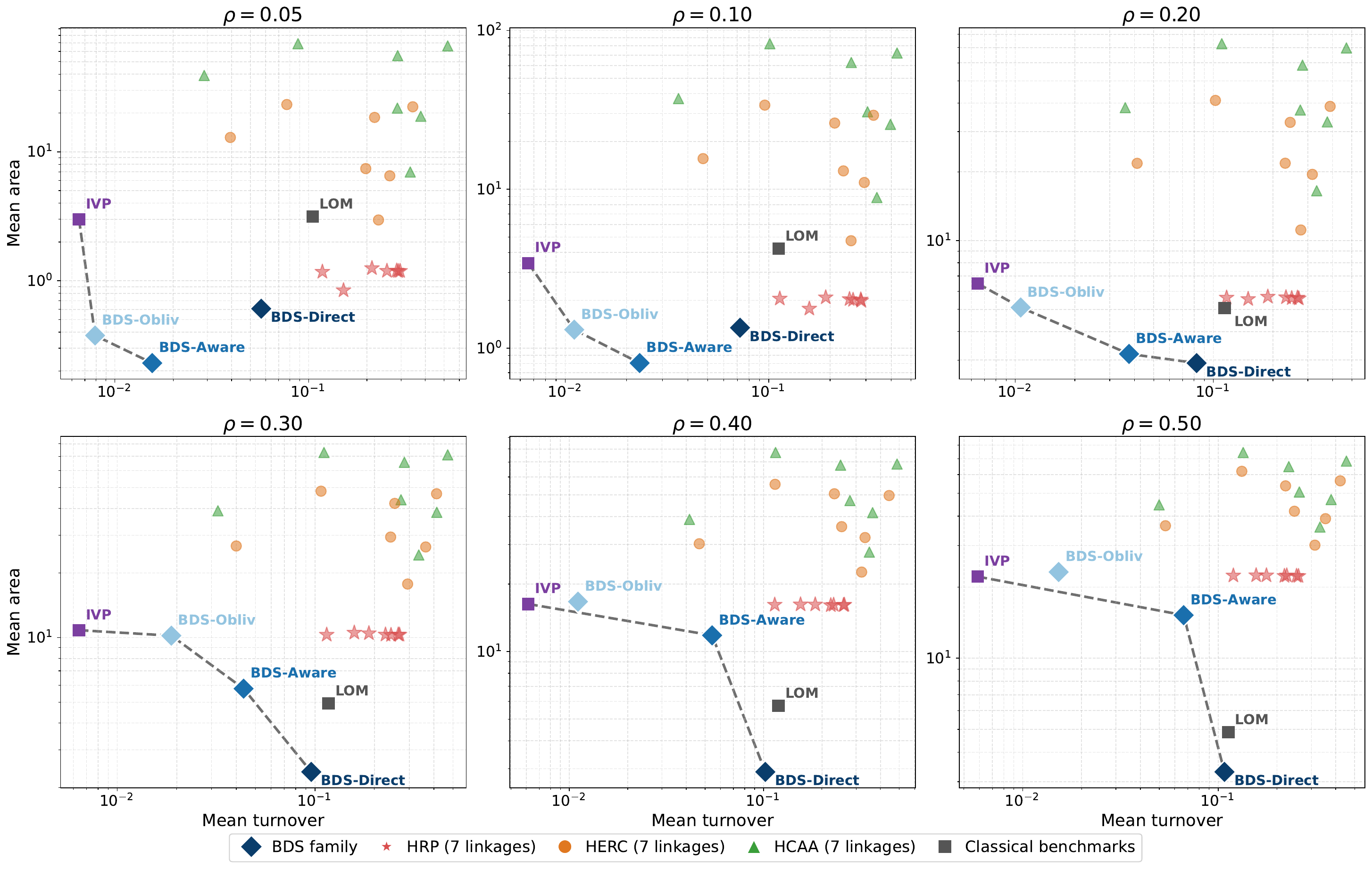}
\caption{Pareto scatter at $n=100$ and $\sigma_s=1.2$ under increasing cross-block correlation. BDS-Aware remains on or near the frontier across all $\rho$. BDS-Obliv degrades as cross-block dependence increases, while BDS-Direct becomes the lowest-area method at higher $\rho$.}
\label{fig:e4_pareto_rho}
\end{figure}

\paragraph{Covariance estimator.}

We next vary the covariance estimator to isolate sensitivity to statistical estimation error in cross-sectional dependence. At $n=100$ in the heterogeneous regime, we consider nine estimators spanning three families: structure-preserving estimators (sample covariance, CC-LW \cite{ledoit2004honey}, PCA \cite{connor1993test}), identity shrinkage estimators (fixed 5\% and 10\% shrinkage \cite{james1961estimation}, Ledoit–Wolf \cite{ledoit2004well}, OAS \cite{chen2010shrinkage}), and exponentially weighted estimators (EWMA with $\lambda \in \{0.95, 0.90\}$ \cite{longerstaey1996riskmetricstm}). Figure~\ref{fig:e5_pareto_by_family} aggregates results by estimator family.

Across estimators, the primary effect is a rescaling of estimation noise rather than a change in method ordering. Structure-preserving estimators retain covariance geometry and therefore preserve the separation between BDS methods and classical benchmarks. Identity shrinkage improves conditioning and narrows performance gaps by benefiting unconstrained or weakly structured portfolios, yet BDS-Direct remains in the frontier throughout. OAS is the only case where improved conditioning slightly favors long-only Markowitz in realized variance, without altering turnover ordering.

EWMA estimators increase temporal variability and amplify instability in classical portfolios. BDS-Direct is also affected, but remains comparatively stable in both area and turnover. Across all estimator families, the ranking of methods is invariant; only the magnitude of separation changes. This indicates that performance differences are driven primarily by portfolio construction rather than covariance estimation.

\begin{figure}[htbp]
\centering
\includegraphics[width=\textwidth]{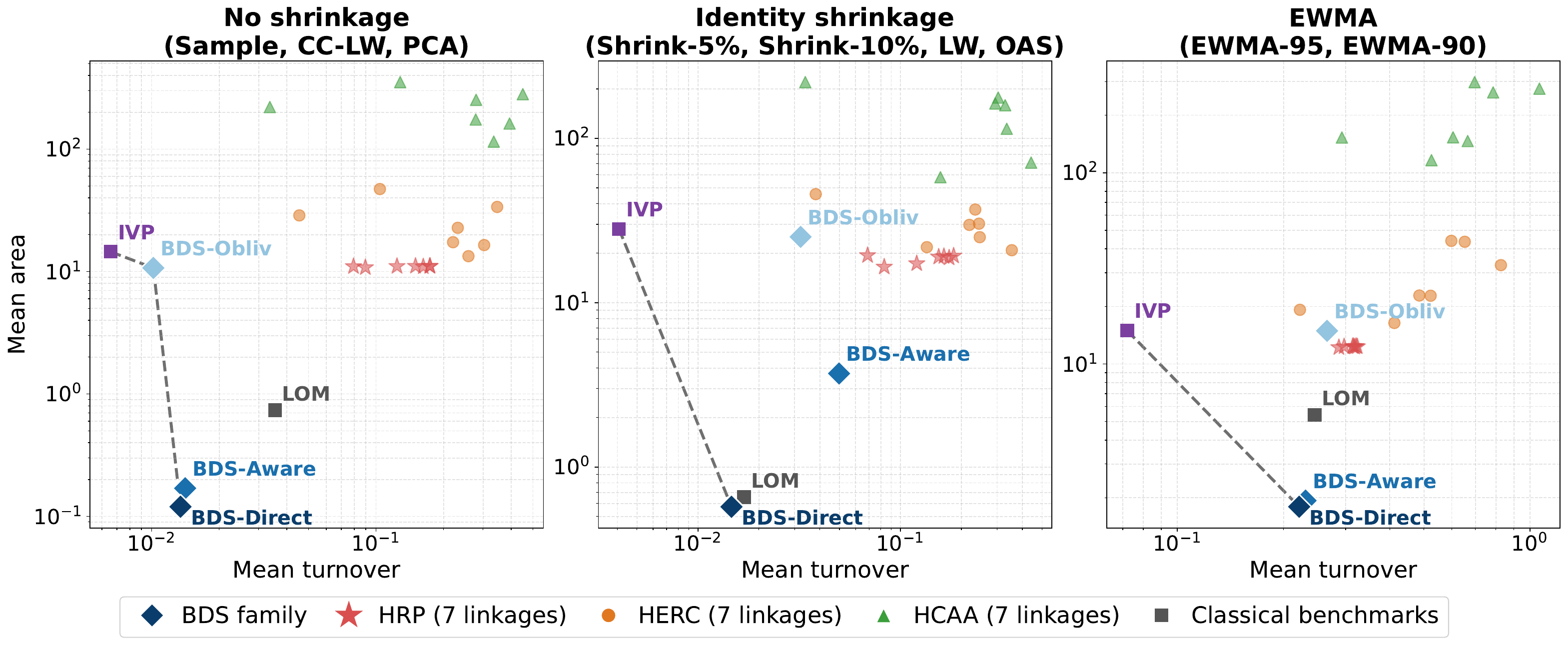}
\caption{Pareto scatter by covariance-estimator family at $n=100$ and $w=2n$. BDS-Direct remains near the frontier across all estimator classes. Differences across estimators primarily affect the scale of estimation noise rather than the ordering of methods.}
\label{fig:e5_pareto_by_family}
\end{figure}


\subsection{Hyperparameter Sensitivity}\label{sec:lopez_sensitivity}

We next vary the factor-loading noise $\eta$ while holding $(u, c) = (67, 33)$ fixed to control the signal-to-noise ratio in factor recovery. Increasing $\eta$ weakens factor identifiability and amplifies estimation error in the covariance matrix. The global shock parameter $\delta$ is held fixed since it does not materially affect comparative rankings in this regime.

Figure~\ref{fig:e3_pareto_sweep} shows a clear regime transition within the BDS family. At low noise ($\eta = 0.05$), BDS-Aware and BDS-Obliv are on the frontier, while BDS-Direct is slightly dominated due to unnecessary regularization when structure is accurately estimated. At intermediate noise ($\eta = 0.25$), all three variants become competitive on the frontier. At high noise ($\eta = 1.0$), BDS-Direct becomes dominant, improving area by approximately 25–28

This shift reflects a bias–variance trade-off in structural enforcement. BDS-Direct imposes the strongest block constraint and therefore benefits most when estimation error dominates signal recovery. The weaker variants are better aligned with low-noise regimes but degrade more rapidly as covariance estimates become unreliable.
Across all regimes, the BDS family remains Pareto-stable, but the identity of the optimal variant depends systematically on the signal-to-noise ratio, with BDS-Direct emerging as the preferred high-noise regularizer.

\begin{figure}[htbp]
\centering
\includegraphics[width=\textwidth]{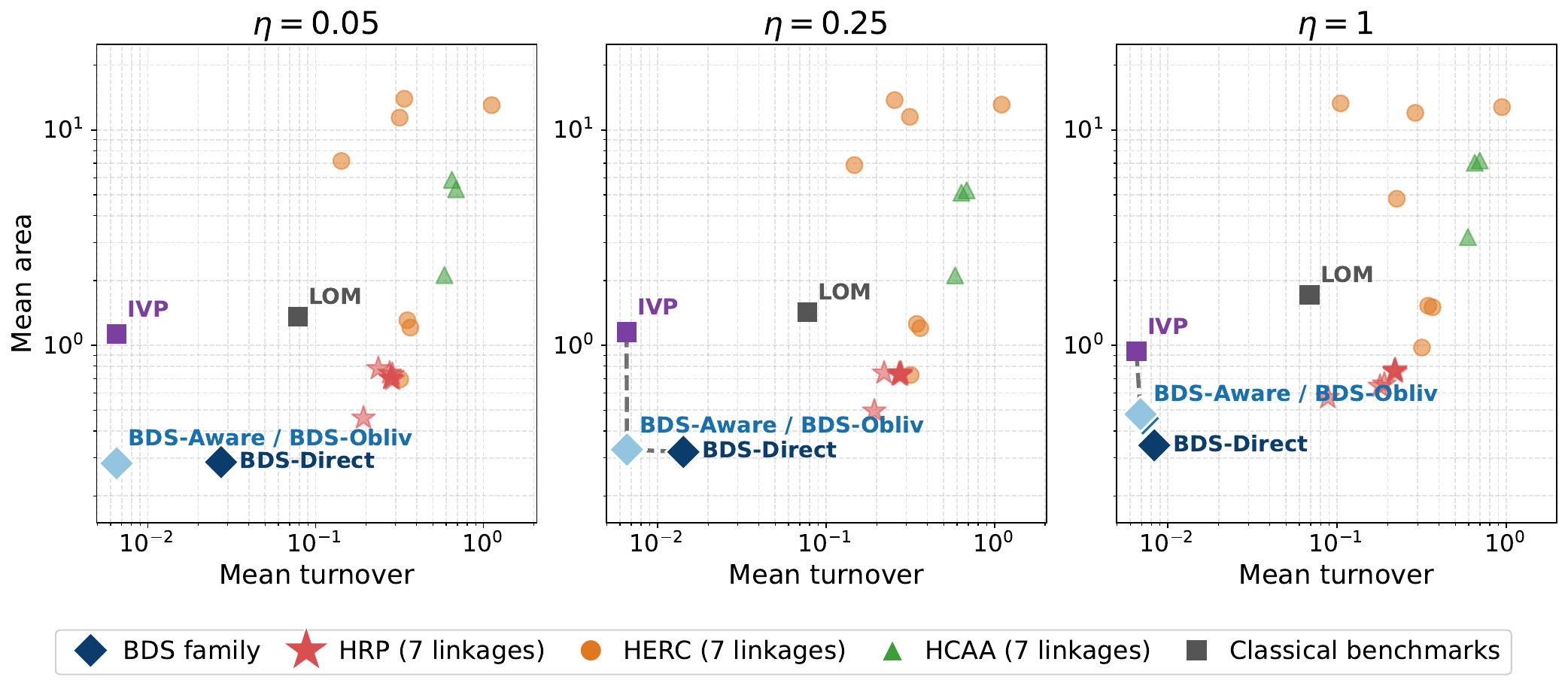}
\caption{Area–turnover Pareto frontier across factor-loading noise levels in the Lopez de Prado model ($u=67$, $c=33$): $\eta \in \{0.05, 0.25, 1.0\}$. Points are averaged over 10 seeds. The frontier transitions from BDS-Aware and BDS-Obliv dominance at low noise to BDS-Direct dominance at high noise, reflecting a shift from estimation-accurate to estimation-limited regimes.}
\label{fig:e3_pareto_sweep}
\end{figure}


\subsection{Robustness across Estimation Windows}\label{sec:rd_tn}

Table~\ref{tab:rd_tn} reports realized volatility and turnover across both estimation windows. Two patterns are stable.
First, BDS-Direct maintains a consistent volatility advantage over LOM and this advantage increases slightly with window length, from 9.1\% at $w=2n$ to 10.4\% at $w=3n$. If the effect were driven only by estimation noise at short windows, the gap would shrink as estimation improves. Instead, better estimates strengthen the ability of BDS-Direct to exploit recovered block structure.

Second, turnover declines for all methods as the window increases, but the reduction is larger for BDS-Direct. Between $w=2n$ and $w=3n$, turnover decreases by 24\% for BDS-Direct versus 12\% for LOM. This reflects a structural difference: LOM reoptimizes over a dense covariance at every step, while BDS-Direct operates on a compressed block representation whose composition stabilizes more quickly as estimation improves.
Rankings remain unchanged across windows. BDS-Direct and BDS-Aware minimize volatility, IVP minimizes turnover, and hierarchical methods remain strictly interior.

\begin{table}[htbp]
\centering
\small
\caption{Realized volatility, turnover, and relative turnover across estimation windows $w \in \{2n, 3n\}$. Data: Fama-French 49 industry XSMOM returns with Ledoit-Wolf estimator (1970–2026). All hierarchical results use Ward linkage for consistency. All hierarchical variants are strictly dominated by BDS-Direct in both metrics across all 21 linkage choices (see Figure~\ref{fig:rd_pareto}).}
\label{tab:rd_tn}
\begin{tabular}{l rrr rrr}
\toprule
& \multicolumn{3}{c}{$w = 2n$}
& \multicolumn{3}{c}{$w = 3n$} \\
\cmidrule(lr){2-4}\cmidrule(lr){5-7}
Method & Vol. & Turn. & $\Delta$Turn & Vol. & Turn. & $\Delta$Turn \\
\midrule
BDS-Obliv  & 0.001688 & 0.0209 & \textbf{+65\%} & 0.001662 & 0.0141 & \textbf{+40\%} \\
BDS-Aware  & 0.001483 & 0.0427 & +236\% & \textbf{0.001389} & 0.0455 & +350\% \\
BDS-Direct & \textbf{0.001467} & 0.0241 & +90\% & 0.001405 & 0.0182 & +80\% \\
\midrule
LOM \citep{markowitz1952}        & 0.001614 & 0.0382 & +201\% & 0.001568 & 0.0337 & +234\% \\
EW \citep{demiguel2009optimal}         & 0.003117 & ---    & --- & 0.003084 & ---    & --- \\
IVP \citep{clarke2006minimum}        & 0.001708 & \textbf{0.0127} & --- & 0.001681 & \textbf{0.0101} & --- \\
HRP-ward \citep{lopez2016building}   & 0.001593 & 0.1504 & +1084\% & 0.001575 & 0.1398 & +1284\% \\
HERC-ward \citep{raffinot2018hierarchical}  & 0.001862 & 0.1609 & +1167\% & 0.001803 & 0.1683 & +1566\% \\
HCAA-ward \citep{raffinot2018hcaa}  & 0.004094 & 0.2736 & +2054\% & 0.003413 & 0.2695 & +2568\% \\
\bottomrule
\end{tabular}
\end{table}

\subsection{Robustness to Covariance Estimator: Sample Covariance}\label{sec:rd_estimator}

We next replace Ledoit-Wolf with the unshrunk sample covariance, which removes shrinkage-based regularization and provides a stricter test of structural robustness.

Table~\ref{tab:rd_tn_sample} shows that BDS-Direct remains dominant over LOM across all windows. The volatility advantage increases at short windows, reaching 14.0\% at $w=2n$, and remains comparable at longer windows. The turnover advantage is also larger than under Ledoit-Wolf, reflecting increased instability in unregularized estimates.

\begin{table}[htbp]
\centering
\small
\caption{Realized volatility and turnover under sample covariance estimation across $w \in \{2n, 3n\}$. Data: Fama-French 49 industry XSMOM returns (1970–2026). All hierarchical methods use Ward linkage for consistency. BDS-Direct remains strictly dominant over all hierarchical variants across all linkage choices and windows.}
\label{tab:rd_tn_sample}
\begin{tabular}{l rrr rrr}
\toprule
& \multicolumn{3}{c}{$w = 2n$}
& \multicolumn{3}{c}{$w = 3n$} \\
\cmidrule(lr){2-4}\cmidrule(lr){5-7}
Method & Vol. & Turn. & $\Delta$Turn & Vol. & Turn. & $\Delta$Turn \\
\midrule
BDS-Obliv  & 0.001300 & 0.0765 & +163\% & 0.001374 & 0.0382 & \textbf{+88\%} \\
BDS-Aware  & 0.001222 & 0.2127 & +631\% & 0.001328 & 0.1570 & +673\% \\
BDS-Direct & \textbf{0.001189} & 0.0731 & \textbf{+151\%} & \textbf{0.001258} & 0.0427 & +110\% \\
\midrule
LOM \citep{markowitz1952}        & 0.001381 & 0.0979 & +236\% & 0.001400 & 0.0908 & +347\% \\
EW \citep{demiguel2009optimal}         & 0.003117 & ---    & --- & 0.003084 & ---    & --- \\
IVP \citep{clarke2006minimum}        & 0.001314 & \textbf{0.0291} & --- & 0.001370 & \textbf{0.0203} & --- \\
HRP-ward \citep{lopez2016building}   & 0.001277 & 0.2007 & +590\% & 0.001372 & 0.1590 & +683\% \\
HERC-ward \citep{raffinot2018hierarchical}  & 0.001298 & 0.1722 & +492\% & 0.001376 & 0.1514 & +646\% \\
HCAA-ward \citep{raffinot2018hcaa}  & 0.003231 & 0.2704 & +829\% & 0.003075 & 0.2389 & +1077\% \\
\bottomrule
\end{tabular}
\end{table}

However, performance becomes less temporally stable. Rolling-volatility dominance drops from 84.8\% of months under Ledoit-Wolf to 76.6\% under sample covariance, indicating that improved average performance comes at the cost of higher variability across time.
A second difference is structural. Under Ledoit-Wolf, BDS-Aware overtakes BDS-Direct at longer windows. This transition disappears under sample covariance, consistent with the idea that shrinkage stabilizes cross-block signals sufficiently for more refined structure to become useful. Without it, estimation noise dominates and the more aggressive regularization of BDS-Direct remains uniformly preferable.
Despite these differences, the ranking is unchanged in aggregate: BDS methods remain strictly superior to LOM and hierarchical constructions across all specifications. Corresponding figures are reported in Supplementary Material~\ref{sec:appendix_real_sample}.

\end{appendices}
\clearpage

\bibliography{references}

\clearpage
\section*{Supplementary Material}
\addcontentsline{toc}{section}{Supplementary Material}

\setcounter{section}{0}

\renewcommand{\thesection}{S\arabic{section}}


\section{Extended Results}


\subsection{Rank Heatmaps: Calibrated Sweep}\label{sec:appendix_rank_e1_cal}

Figure~\ref{fig:app_rank_e1_cal} reports area and turnover ranks across all $(n,\text{regime})$ cells in the calibrated sweep at $w=2n$, $\sigma_s=1.2$, $n\in\{50,100,150,200,250\}$. Rows are sorted by mean area rank. BDS-Direct and BDS-Aware consistently occupy the top two area ranks in heterogeneous regimes, while BDS-Obliv dominates homogeneous regimes. On turnover, BDS-Obliv ranks first in all cells, followed by BDS-Direct and BDS-Aware. No benchmark or hierarchical method enters the top two area ranks in heterogeneous regimes, consistent with Figure~\ref{fig:e1_pareto_pooled}.

\begin{figure}[h]
    \centering
    \includegraphics[width=0.95\textwidth]{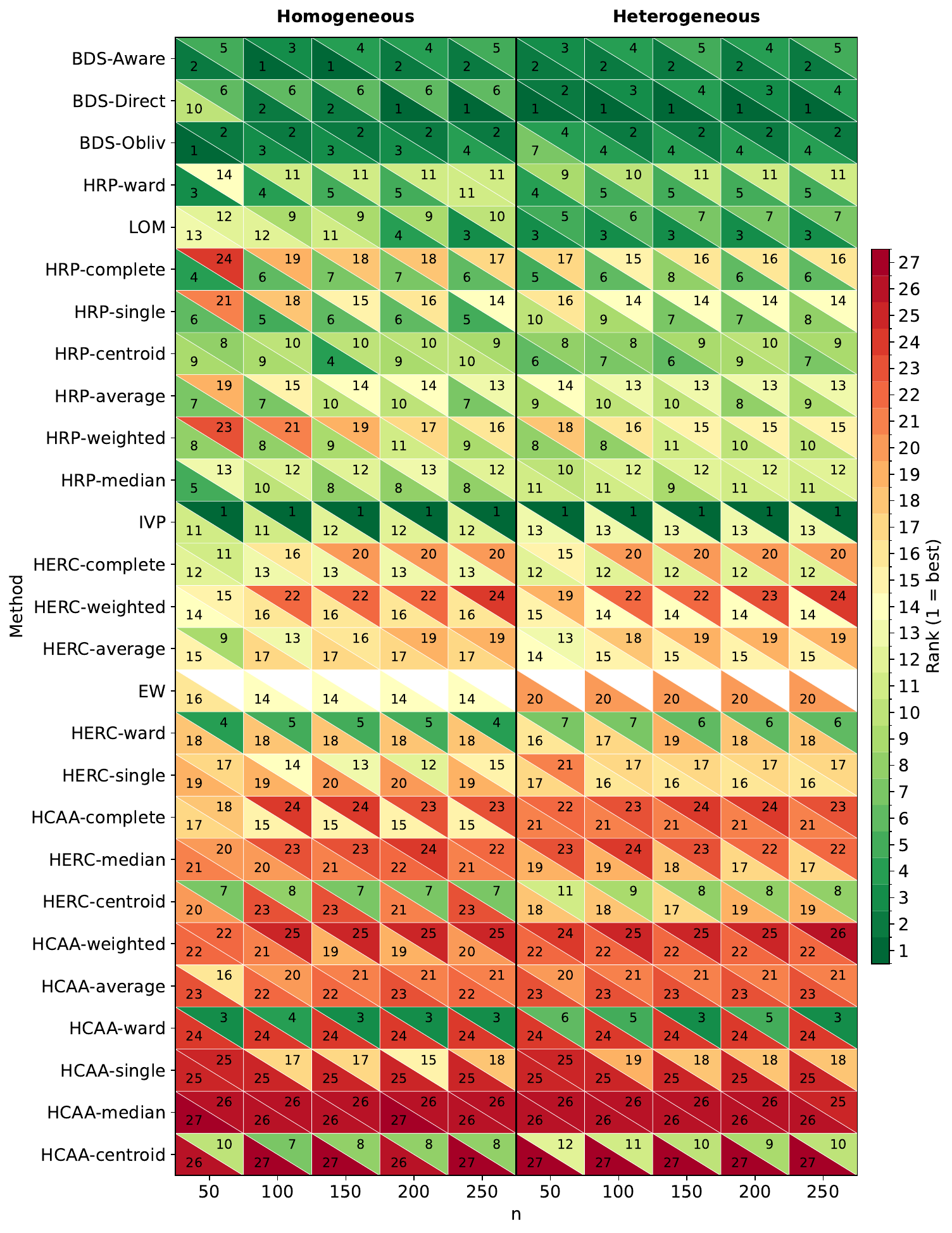}
    \caption{Method ranks across $(n,\text{regime})$ cells in the calibrated sweep ($w=2n$, $\sigma_s=1.2$). Each cell shows area rank (lower-left) and turnover rank (upper-right), where 1 is best (green). BDS-Direct and BDS-Aware dominate area in heterogeneous regimes, while BDS-Obliv dominates turnover across all regimes. Equal Weight is excluded from turnover ranking due to zero turnover.}
    \label{fig:app_rank_e1_cal}
\end{figure}

\subsection{Pareto Frontier by Universe Size: Heterogeneous Regime}\label{sec:appendix_pareto_by_n}

Figure~\ref{fig:app_pareto_by_n_het} reports Pareto frontiers by universe size at $w=2n$ in the heterogeneous regime. BDS methods define the frontier for every $n$, confirming that pooled dominance is not aggregation-driven. The performance gap increases with $n$, consistent with Figure~\ref{fig:e1_fragility}.

\begin{figure}[h]
    \centering
    \includegraphics[width=\textwidth]{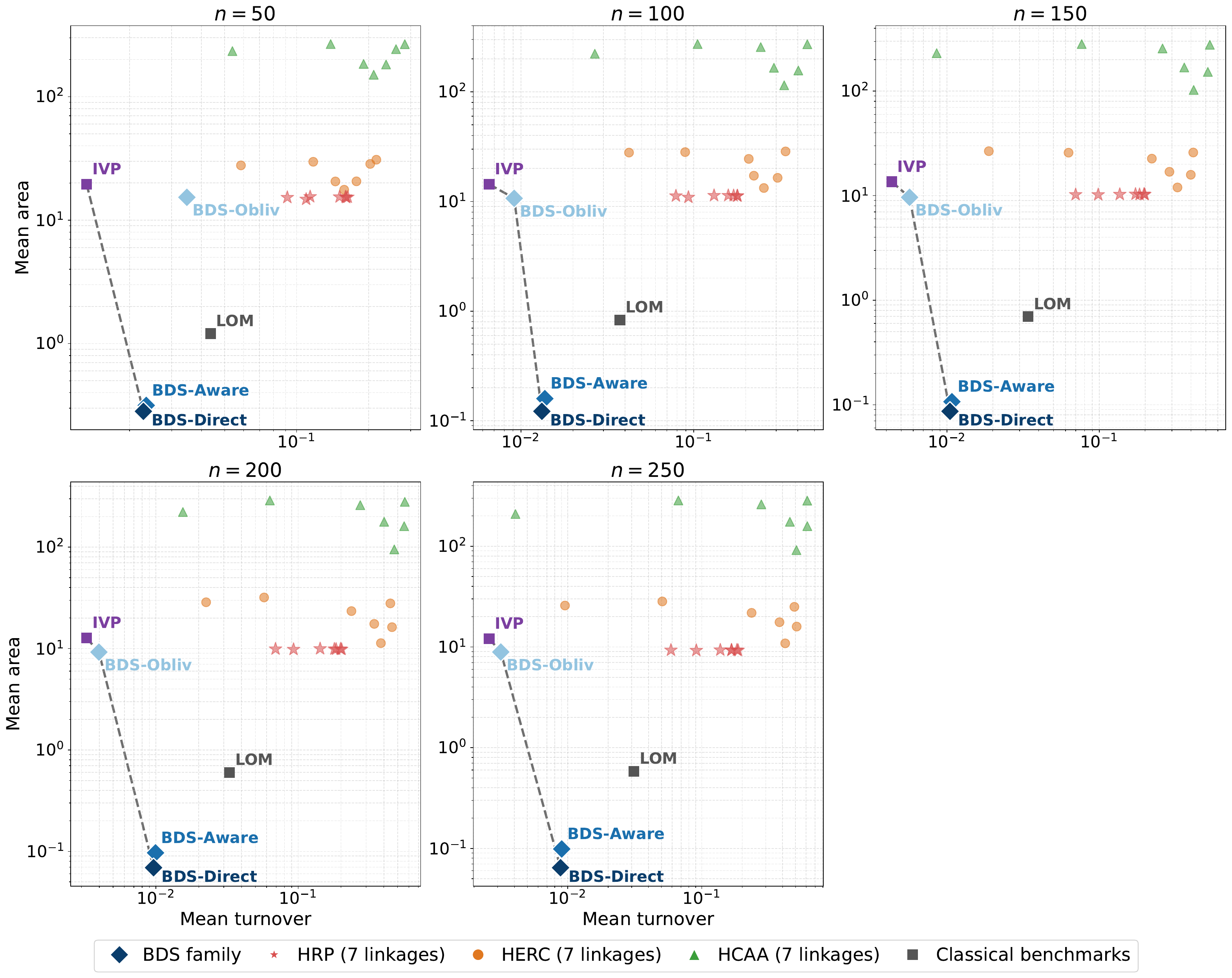}
    \caption{Pareto frontier by universe size $n$ in the heterogeneous regime ($w=2n$). BDS methods define the frontier at every $n$, with a widening performance gap as $n$ increases.}
    \label{fig:app_pareto_by_n_het}
\end{figure}

\subsection{Window Length Robustness: Rank Heatmap}\label{sec:appendix_tn_robustness}

Figure~\ref{fig:app_tn_robustness} compares ranks at $w=2n$ and $w=3n$. Each cell fixes ordering by the $w=2n$ rank; discrepancies across triangles indicate changes. Rankings are stable across windows: BDS-Direct and BDS-Aware remain top in heterogeneous regimes, while BDS-Obliv dominates homogeneous regimes.

\begin{figure}[h]
    \centering
    \includegraphics[width=\textwidth]{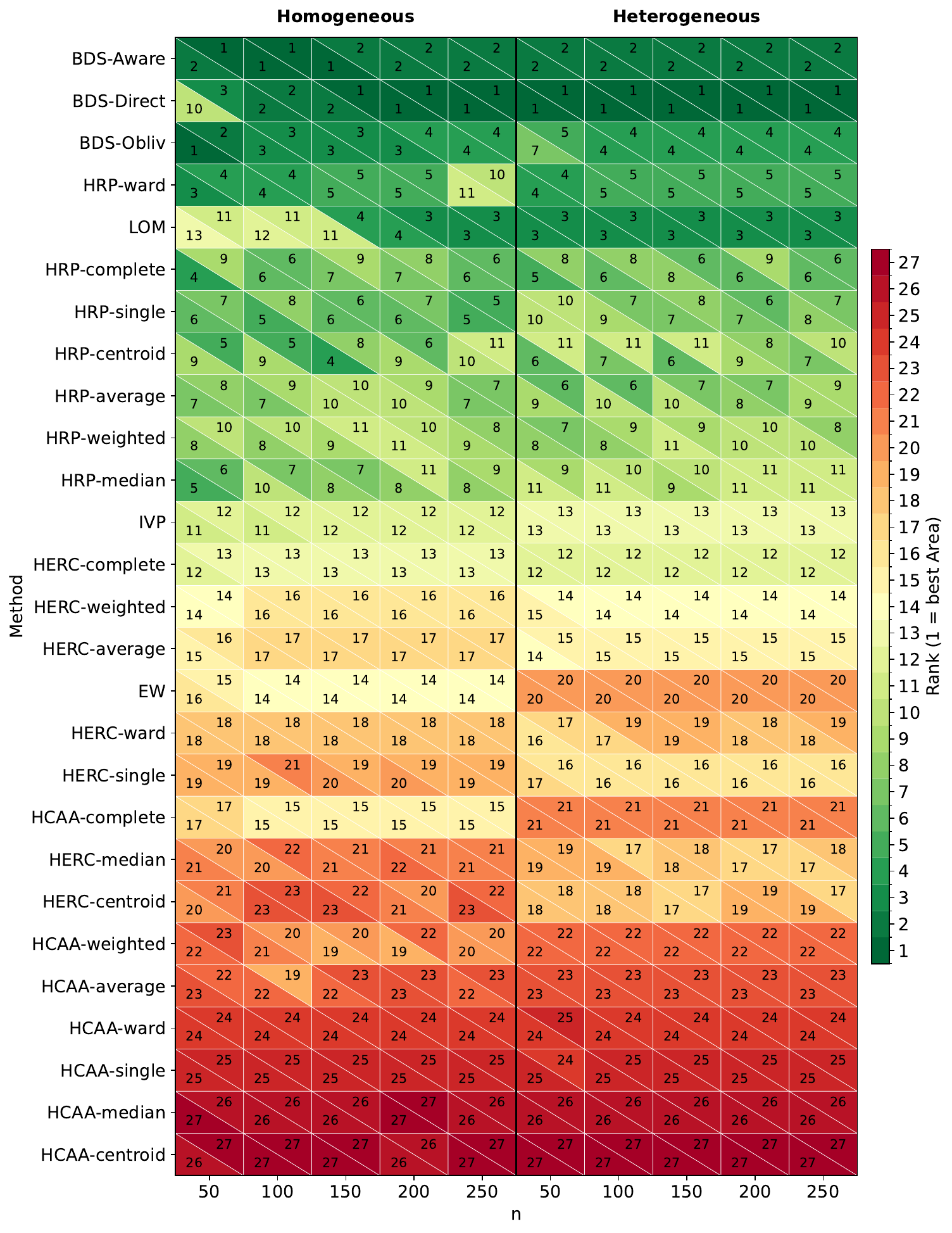}
    \caption{Area ranks across $(n,\text{regime})$ for $w\in\{2n,3n\}$. Each cell shows $w=2n$ (lower-left) and $w=3n$ (upper-right), with 1 as best (green). Ordering is fixed to $w=2n$. Rankings are stable across window lengths.}
    \label{fig:app_tn_robustness}
\end{figure}


\subsection{Robustness: Covariance Estimator}\label{sec:appendix_robustness_estimator}

Figure~\ref{fig:e5_pareto_by_family} in Appendix~\ref{sec:robustness} reports estimator-level Pareto results. Table~\ref{tab:app_estimator} reports mean area at $n=100$. Under structure-preserving estimators (Sample, CC-LW, PCA), BDS-Direct is uniformly best, with roughly $5\times$–$7\times$ improvement over LOM. Under identity shrinkage, LOM improves and the gap narrows; under OAS, LOM is the only estimator where it outperforms BDS-Direct. Under EWMA, LOM deteriorates sharply while BDS-Direct remains stable.

Figure~\ref{fig:app_e5_vol_ratio} reports volatility changes relative to sample covariance. Identity shrinkage slightly benefits LOM while increasing BDS-Direct volatility by about $2$–$3\%$, consistent with reduced dispersion in cross-block structure. Under EWMA, LOM volatility increases substantially, while BDS-Direct remains comparatively stable.

\begin{figure}[h]
    \centering
    \includegraphics[width=\textwidth]{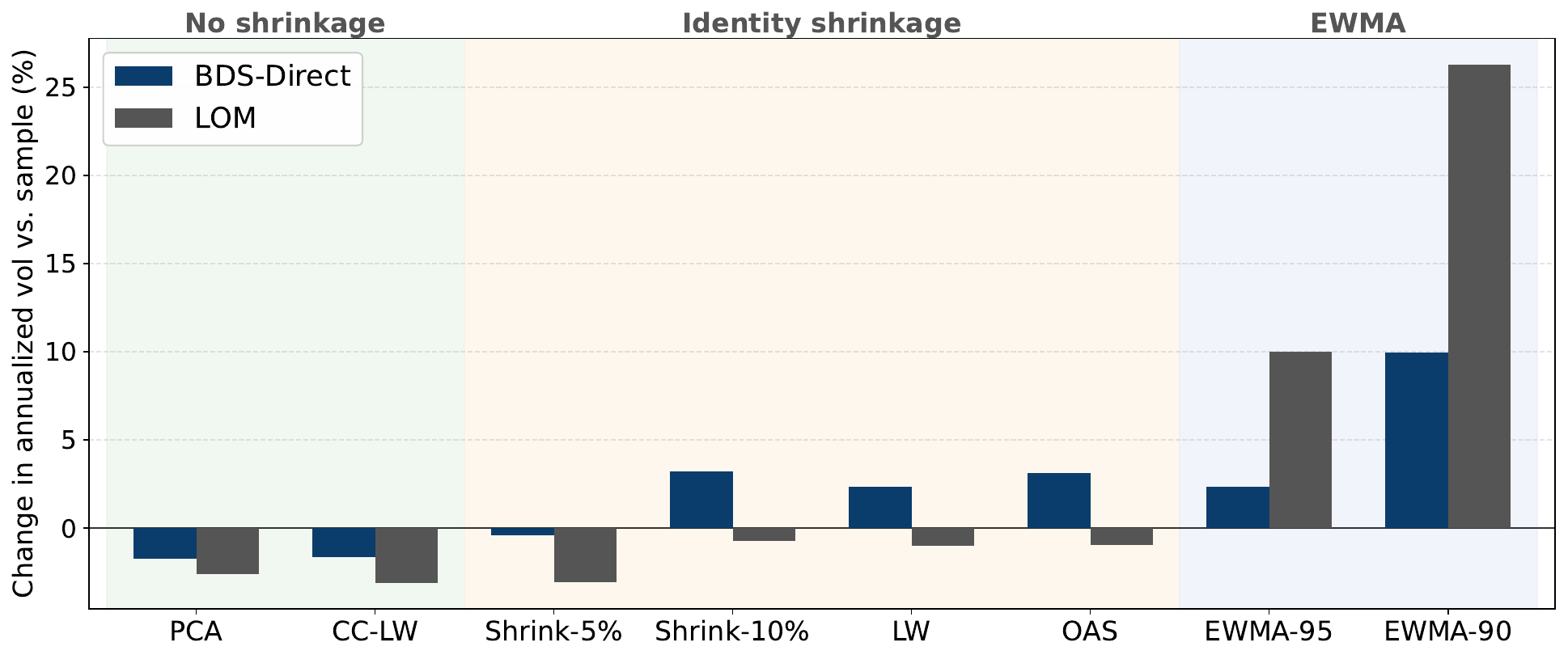}
    \caption{Change in realized volatility relative to sample covariance baseline in the heterogeneous regime ($n=100$, $w=2n$). Bars compare BDS-Direct (blue) and LOM (gray) across estimators. Positive values indicate higher volatility. Sample baseline omitted by construction.}
    \label{fig:app_e5_vol_ratio}
\end{figure}

\begin{table}[h]
\centering
\footnotesize
\caption{Mean area at $n=100$ across covariance estimators (10 seeds). Bold indicates column minimum.}
\label{tab:app_estimator}
\resizebox{\textwidth}{!}{%
\begin{tabular}{l ccc cccc cc}
\toprule
 & \multicolumn{3}{c}{No shrinkage} & \multicolumn{4}{c}{Identity shrinkage} & \multicolumn{2}{c}{EWMA} \\
\cmidrule(lr){2-4}\cmidrule(lr){5-8}\cmidrule(lr){9-10}
Method & Sample & CC-LW & PCA & Shrink-5\% & Shrink-10\% & LW & OAS & EWMA-95 & EWMA-90 \\
\midrule
BDS-Direct  & \textbf{0.146} & \textbf{0.108} & \textbf{0.105} & \textbf{0.105} & \textbf{0.637} & \textbf{0.790} & 0.760 & \textbf{1.007} & \textbf{2.580} \\
BDS-Aware   & 0.183 & 0.187 & 0.140 & 1.980 & 4.801 & 3.386 & 4.664 & 1.050 & 3.032 \\
BDS-Obliv   & 10.749 & 10.784 & 10.573 & 18.387 & 27.073 & 27.697 & 27.390 & 14.040 & 15.998 \\
\midrule
LOM \citep{markowitz1952} & 0.825 & 0.628 & 0.760 & 0.336 & 0.692 & 0.884 & \textbf{0.710} & 3.321 & 7.528 \\
HRP-ward \citep{lopez2016building} & 10.981 & 11.107 & 10.307 & 15.454 & 20.495 & 20.893 & 20.757 & 11.938 & 12.482 \\
IVP \citep{clarke2006minimum} & 14.585 & 14.546 & 14.406 & 22.671 & 29.629 & 30.152 & 30.048 & 15.008 & 14.871 \\
\bottomrule
\end{tabular}}
\end{table}


\subsection{Misspecified Factor Structure: Per-Configuration Results}\label{sec:appendix_e2_byconfig}

Figure~\ref{fig:app_e2_pareto_by_config} reports Pareto frontiers for each Lopez configuration. BDS methods define the frontier in all cases, while HRP improves over LOM but remains strictly dominated. The advantage of BDS-Direct increases with universe size, reaching a $4.3\times$ reduction in area at $n=100$. Table~\ref{tab:e2_metrics} reports full per-configuration means and standard deviations.

\begin{figure}[h]
    \centering
    \includegraphics[width=\textwidth]{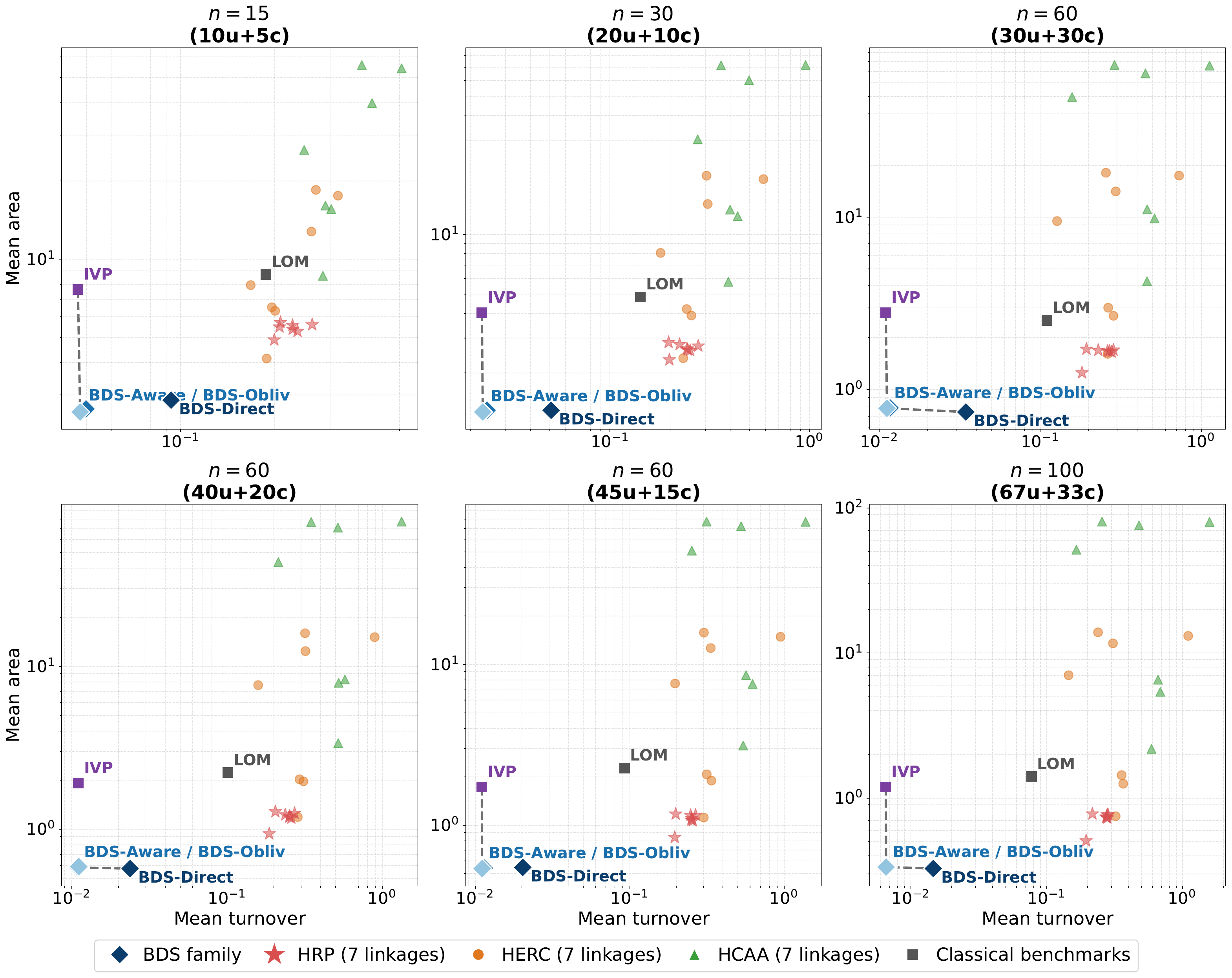}
    \caption{Pareto frontiers for six Lopez configurations (10 seeds). BDS methods define the frontier in all cases; HRP remains strictly dominated.}
    \label{fig:app_e2_pareto_by_config}
\end{figure}

\begin{table}[h]
\centering
\footnotesize
\caption{Mean (std. dev.) across all six Lopez de Prado configurations (10 seeds). Bold marks the lowest area and volatility\ within each panel, and the lowest turnover\ and $\Delta$Turn. Active set and ENB shown without std or highlighting. EW turnover\ and $\Delta$Turn omitted (---) as zero rebalancing is trivial by construction.}
\label{tab:e2_metrics}
\setlength{\tabcolsep}{3pt}
\begin{tabular}{l rrrrrrr}
\toprule
 & BDS-Obliv & BDS-Aware & BDS-Direct & LOM \citep{markowitz1952} & IVP \citep{clarke2006minimum} & HRP-ward \citep{lopez2016building} & EW \citep{demiguel2009optimal} \\
\midrule
\multicolumn{8}{l}{\textit{$n=15$,\ $u=10$,\ $c=5$}} \\
\quad Area          & \textbf{2.58 (0.35)} & 2.65 (0.37) & 2.86 (0.31) & 8.73 (1.06) & 7.63 (1.27) & 4.89 (0.53) & 6.99 (1.00) \\
\quad Volatility           & \textbf{5.10 (0.27)} & \textbf{5.10 (0.28)} & 5.14 (0.28) & 5.72 (0.35) & 5.67 (0.28) & 5.37 (0.26) & 5.62 (0.31) \\
\quad Turnover         & \textbf{0.047 (0.001)} & 0.050 (0.003) & 0.093 (0.007) & 0.187 (0.011) & \textbf{0.047 (0.001)} & 0.199 (0.010) & --- \\
\quad $\Delta$Turn   & \textbf{0\%} & +6\% & +98\% & +298\% & --- & +323\% & --- \\
\quad Active / ENB  & 15.0\ /\ 11.8 & 15.0\ /\ 11.8 & 11.7\ /\ 9.9 & 9.9\ /\ 7.5 & 15.0\ /\ 14.0 & 15.0\ /\ 12.0 & 15.0\ /\ 15.0 \\
\midrule
\multicolumn{8}{l}{\textit{$n=30$,\ $u=20$,\ $c=10$}} \\
\quad Area          & \textbf{1.27 (0.22)} & 1.30 (0.21) & 1.29 (0.27) & 4.83 (0.70) & 4.02 (0.72) & 2.33 (0.35) & 3.78 (0.70) \\
\quad Volatility           & \textbf{3.61 (0.19)} & \textbf{3.61 (0.19)} & 3.62 (0.20) & 4.07 (0.20) & 3.95 (0.20) & 3.77 (0.20) & 3.93 (0.20) \\
\quad Turnover         & \textbf{0.023 (0.000)} & 0.024 (0.002) & 0.051 (0.003) & 0.142 (0.011) & \textbf{0.023 (0.000)} & 0.198 (0.009) & --- \\
\quad $\Delta$Turn   & \textbf{0\%} & +4\% & +122\% & +517\% & --- & +761\% & --- \\
\quad Active / ENB  & 30.0\ /\ 24.6 & 30.0\ /\ 24.5 & 24.4\ /\ 20.7 & 19.8\ /\ 14.7 & 30.0\ /\ 28.8 & 30.0\ /\ 25.1 & 30.0\ /\ 30.0 \\
\midrule
\multicolumn{8}{l}{\textit{$n=60$,\ $u=30$,\ $c=30$}} \\
\quad Area          & 0.78 (0.13) & 0.78 (0.15) & \textbf{0.74 (0.12)} & 2.52 (0.25) & 2.79 (1.17) & 1.25 (0.11) & 2.77 (1.19) \\
\quad Volatility           & 2.95 (0.15) & 2.95 (0.16) & \textbf{2.94 (0.15)} & 3.27 (0.15) & 3.30 (0.20) & 3.05 (0.14) & 3.29 (0.19) \\
\quad Turnover        & \textbf{0.011 (0.000)} & 0.012 (0.001) & 0.035 (0.003) & 0.110 (0.006) & \textbf{0.011 (0.000)} & 0.182 (0.013) & --- \\
\quad $\Delta$Turn   & \textbf{0\%} & +9\% & +218\% & +900\% & --- & +1555\% & --- \\
\quad Active / ENB  & 60.0\ /\ 46.4 & 60.0\ /\ 46.4 & 43.5\ /\ 34.4 & 33.2\ /\ 24.4 & 60.0\ /\ 58.8 & 60.0\ /\ 47.7 & 60.0\ /\ 60.0 \\
\midrule
\multicolumn{8}{l}{\textit{$n=60$,\ $u=40$,\ $c=20$}} \\
\quad Area          & 0.59 (0.06) & 0.58 (0.05) & \textbf{0.57 (0.07)} & 2.23 (0.31) & 1.92 (0.29) & 0.94 (0.07) & 1.95 (0.25) \\
\quad Volatility           & 2.51 (0.14) & 2.51 (0.15) & \textbf{2.50 (0.15)} & 2.83 (0.18) & 2.72 (0.17) & 2.57 (0.15) & 2.71 (0.15) \\
\quad Turnover         & \textbf{0.011 (0.000)} & \textbf{0.011 (0.001)} & 0.024 (0.002) & 0.101 (0.008) & \textbf{0.011 (0.000)} & 0.187 (0.008) & --- \\
\quad $\Delta$Turn   & \textbf{0\%} & \textbf{0\%} & +118\% & +818\% & --- & +1600\% & --- \\
\quad Active / ENB  & 60.0\ /\ 49.4 & 60.0\ /\ 49.4 & 49.5\ /\ 42.6 & 40.0\ /\ 29.5 & 60.0\ /\ 58.8 & 60.0\ /\ 50.1 & 60.0\ /\ 60.0 \\
\midrule
\multicolumn{8}{l}{\textit{$n=60$,\ $u=45$,\ $c=15$}} \\
\quad Area          & \textbf{0.54 (0.06)} & 0.54 (0.06) & 0.55 (0.06) & 2.27 (0.25) & 1.73 (0.34) & 0.84 (0.06) & 1.86 (0.42) \\
\quad Volatility           & \textbf{2.36 (0.13)} & \textbf{2.36 (0.13)} & 2.37 (0.13) & 2.69 (0.12) & 2.59 (0.16) & 2.40 (0.12) & 2.62 (0.18) \\
\quad Turnover         & \textbf{0.011 (0.000)} & \textbf{0.011 (0.001)} & 0.020 (0.001) & 0.093 (0.004) & \textbf{0.011 (0.000)} & 0.196 (0.011) & --- \\
\quad $\Delta$Turn   & \textbf{0\%} & \textbf{0\%} & +82\% & +745\% & --- & +1682\% & --- \\
\quad Active / ENB  & 60.0\ /\ 51.5 & 60.0\ /\ 51.4 & 52.0\ /\ 46.4 & 42.3\ /\ 30.8 & 60.0\ /\ 58.8 & 60.0\ /\ 51.5 & 60.0\ /\ 60.0 \\
\midrule
\multicolumn{8}{l}{\textit{$n=100$,\ $u=67$,\ $c=33$}} \\
\quad Area          & 0.34 (0.02) & 0.34 (0.02) & \textbf{0.33 (0.03)} & 1.41 (0.15) & 1.19 (0.21) & 0.51 (0.03) & 1.20 (0.21) \\
\quad Volatility           & 1.95 (0.09) & 1.95 (0.09) & \textbf{1.95 (0.09)} & 2.23 (0.09) & 2.16 (0.08) & 1.99 (0.08) & 2.16 (0.08) \\
\quad Turn.         & \textbf{0.007 (0.000)} & \textbf{0.007 (0.000)} & 0.015 (0.001) & 0.077 (0.004) & \textbf{0.007 (0.000)} & 0.195 (0.009) & --- \\
\quad $\Delta$Turn   & \textbf{0\%} & \textbf{0\%} & +114\% & +1000\% & --- & +2686\% & --- \\
\quad Active / ENB  & 100.0\ /\ 83.9 & 100.0\ /\ 83.8 & 83.3\ /\ 71.5 & 66.2\ /\ 48.4 & 100.0\ /\ 98.8 & 100.0\ /\ 82.2 & 100.0\ /\ 100.0 \\
\bottomrule
\end{tabular}
\end{table}

\subsection{XSMOM Robustness to Covariance Estimator: Sample Covariance Figures}\label{sec:appendix_real_sample}

Appendix~\ref{sec:rd_estimator} reports that the Pareto dominance of BDS-Direct on
FF49 XSMOM returns, established under the Ledoit-Wolf estimator in the main
text, replicates under the unshrunk sample covariance estimator at every
estimation window, with a larger volatility margin at the shorter window
but lower temporal persistence overall. Figures~\ref{fig:app_sample_pareto}
and~\ref{fig:app_sample_rolling} reproduce the corresponding Pareto scatter and
rolling-volatility figures for the sample-covariance specification, same
convention as Figures~\ref{fig:rd_pareto} and~\ref{fig:rd_rolling}.

\begin{figure}[h]
    \centering
    \includegraphics[width=\textwidth]{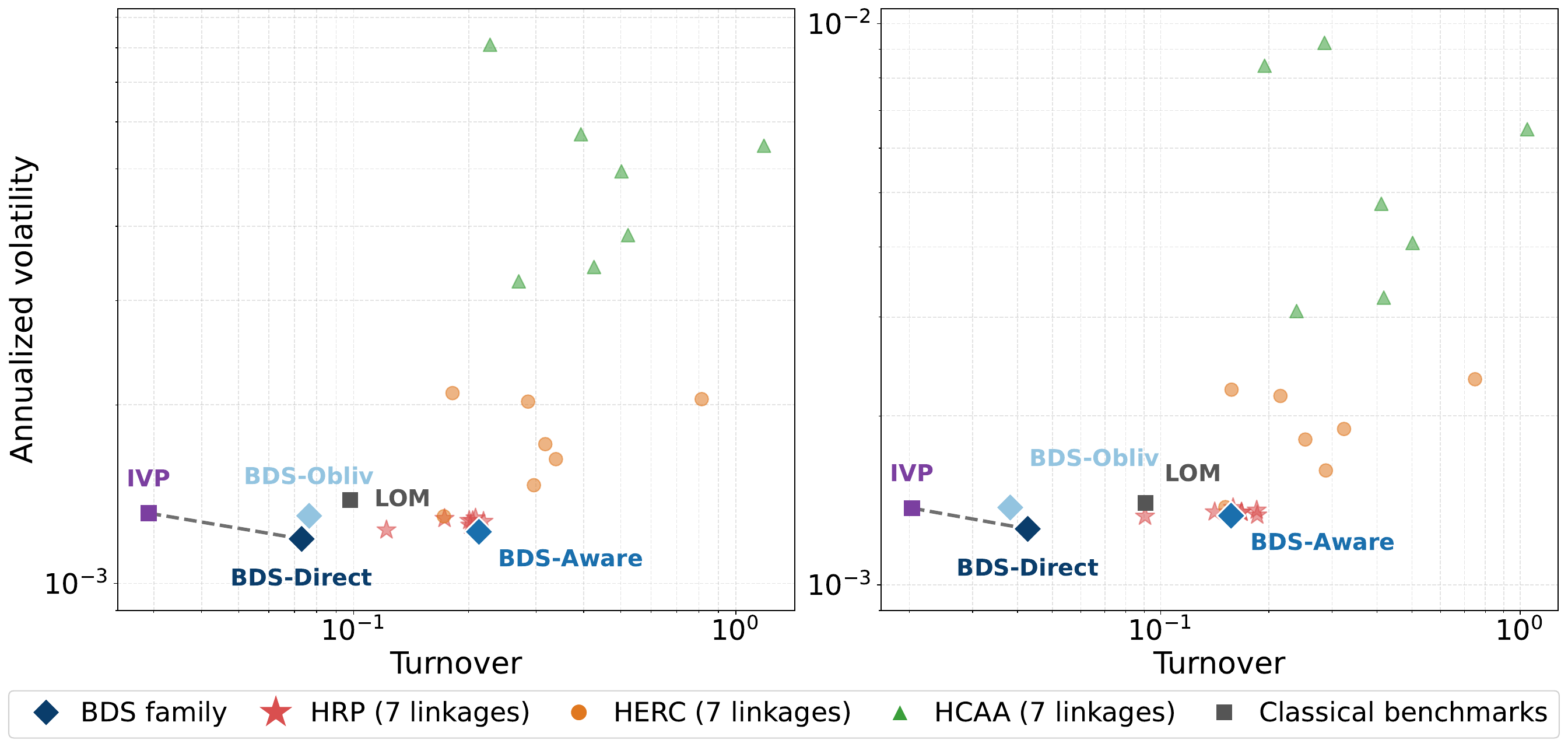}
    \caption{Realized annualized volatility vs.\ mean monthly turnover for each of
    26 portfolio methods on the Fama-French 49 industry XSMOM returns, sample
    covariance estimator, across two estimation windows $w \in \{2n,\,
    3n\}$ (monthly rebalancing, July~1970--February~2026). Same convention as
    Figure~\ref{fig:rd_pareto}. The frontier consists exclusively of BDS methods and
    IVP at every window; LOM and all 21 hierarchical variants are strictly
    dominated on both axes simultaneously. Absolute turnover levels are roughly
    twice those under Ledoit-Wolf (Figure~\ref{fig:rd_pareto}), reflecting the
    instability of the unshrunk estimator; the qualitative ranking is unchanged.}
    \label{fig:app_sample_pareto}
\end{figure}

\begin{figure}[h]
    \centering
    \includegraphics[width=\textwidth]{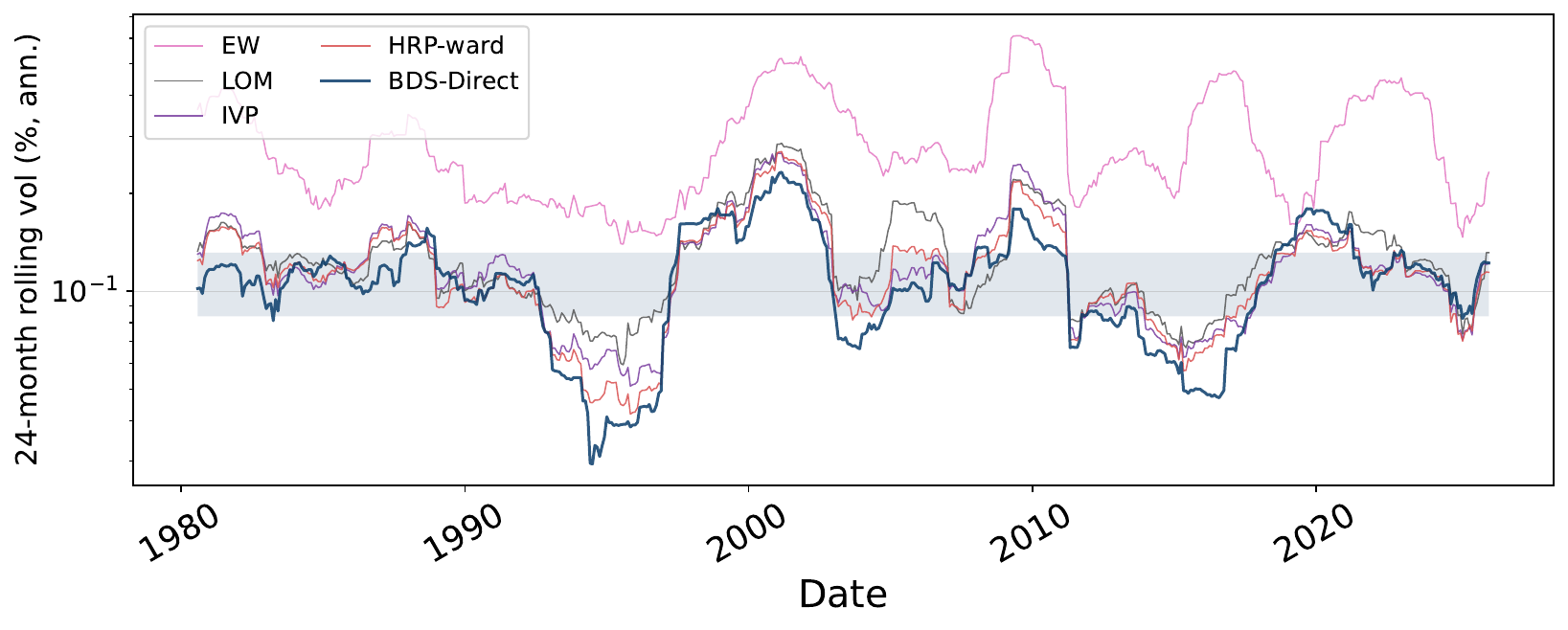}
    \caption{24-month rolling realized annualized volatility (\%) for nine
    portfolio methods on the Fama-French 49 industry XSMOM returns, sample
    covariance estimator, estimation window $w = 2n$ (monthly rebalancing).
    Same convention as Figure~\ref{fig:rd_rolling}. Best BDS method (BDS-Direct) 
    falls below LOM in $76.6\%$ months across the
    full 1980--2026 evaluation period, including through the dot-com
    correction, the global financial crisis, and the COVID-19 disruption, a
    somewhat less persistent advantage than under Ledoit-Wolf ($84.8\%$ Figure~\ref{fig:rd_rolling}), consistent with the noisier,
    unshrunk covariance estimate.}
    \label{fig:app_sample_rolling}
\end{figure}

\end{document}